\definecolor{beg}{RGB}{220,220,220} 
\def\bcol{\begin{tcolorbox}[enhanced jigsaw,colback=beg,boxrule=0pt,arc=0pt]}
\def\ecol{\end{tcolorbox}}
\newcommand*{\tecka}[1]{%
  \accentset{\mbox{\large\bfseries .}}{#1}}
\definecolor{bg}{RGB}{255,249,227}
\newcommand\tr{\operatorname{tr}}
\newtheorem{Theorem}{Theorem}[section]
\newtheorem{Proposition}[Theorem]{Proposition}
\newtheorem{Lemma}[Theorem]{Lemma}
\newtheorem{Corollary}[Theorem]{Corollary}
\theoremstyle{definition}
\newtheorem{Definition}[Theorem]{Definition}
\newtheorem{Remark}[Theorem]{Remark}
\newcommand{\bTheorem}[1]{
\begin{Theorem} \label{T#1} }
\newcommand{\eT}{\end{Theorem}}
\newcommand{\bProposition}[1]{
\begin{Proposition} \label{P#1}}
\newcommand{\eP}{\end{Proposition}}
\newcommand{\bLemma}[1]{
\begin{Lemma} \label{L#1} }
\newcommand{\eL}{\end{Lemma}}
\newcommand{\bCorollary}[1]{
\begin{Corollary} \label{C#1} }
\newcommand{\eC}{\end{Corollary}}
\newcommand{\bRemark}[1]{
\begin{Remark} \label{R#1} }
\newcommand{\eR}{\end{Remark}}
\newcommand{\bDefinition}[1]{
\begin{Definition} \label{D#1} }
\newcommand{\eD}{\end{Definition}}
\newcommand{\bs}{\vc{s}}
\newcommand{\vje}{\mathbf{j}_{e}}
\newcommand{\vjtheta}{\mathbf{j}_{\theta}}
\newcommand{\vjb}{\mathbf{j}_{b}}
\newcommand{\vjeta}{\mathbf{j}_{\eta}}
\newcommand{\bfb}{\mathbf{b}}
\newcommand{\bn}{\mathbf{n}}
\def\diver{\mathop{\mathrm{div}}\nolimits}
\newcommand{\eps}{\varepsilon}
\newcommand{\dd}{\,\mathrm{d}}
\newcommand{\bFormula}[1]{
\begin{equation} \label{#1}}
\newcommand{\eF}{\end{equation}}
\newcommand{\vn}{\vc{n}}
\newcommand{\vc}[1]{{\bf #1}}
\newcommand{\Div}{\, {\rm div}_x}
\newcommand{\Grad}{\nabla_x}
\newcommand{\dx}{\,{\rm d} {x}}
\newcommand{\dS}{\,{\rm d} {S}}
\newcommand{\dt}{\,{\rm d} t }
\newcommand{\vv}{\vc{v}}
\newcommand{\vw}{\vc{w}}
\newcommand{\R}{\mathbb{R}}
\definecolor{Cgrey}{rgb}{0.85,0.85,0.85}
\definecolor{Cblue}{rgb}{0.50,0.85,0.85}
\definecolor{Cred}{rgb}{1,0,0}
\definecolor{fancy}{rgb}{0.10,0.85,0.10}
\newcommand\Cbox[2]{%
    \newbox\contentbox%
    \newbox\bkgdbox%
    \setbox\contentbox\hbox to \hsize{%
        \vtop{
            \kern\columnsep
            \hbox to \hsize{%
                \kern\columnsep%
                \advance\hsize by -2\columnsep%
                \setlength{\textwidth}{\hsize}%
                \vbox{
                    \parskip=\baselineskip
                    \parindent=0bp
                    #2
                }%
                \kern\columnsep%
            }%
            \kern\columnsep%
        }%
    }%
    \setbox\bkgdbox\vbox{
        \color{#1}
        \hrule width  \wd\contentbox %
               height \ht\contentbox %
               depth  \dp\contentbox
        \color{black}
    }%
    \wd\bkgdbox=0bp%
    \vbox{\hbox to \hsize{\box\bkgdbox\box\contentbox}}%
    \vskip\baselineskip%
}
\begin{document}


\title{\Large On incompressible heat-conducting viscoelastic rate-type fluids with stress-diffusion and purely spherical elastic response}

\author{Miroslav Bul\'{\i}\v{c}ek$^{*}$, Josef M\'{a}lek$^{*}$, V\'\i{}t Pr\accent23u\v{s}a$^{*}$, and Endre S\"uli$^{**}$}




\maketitle

\bigskip

\centerline{$^{*}$ Charles University, Faculty of Mathematics and Physics, Mathematical Institute}

\centerline{Sokolovsk\'a 83, 186 75 Prague 8, Czech Republic}

\centerline{}

\centerline{$^{**}$ University of Oxford, Mathematical Institute, }

\centerline{Radcliffe Observatory Quarter, Woodstock Road, Oxford OX2 6GG, United Kingdom}

\bigskip

\begin{abstract}

  We prove the existence of large-data global-in-time weak solutions to an evolutionary PDE system describing flows of incompressible \emph{heat-conducting} viscoelastic rate-type fluids with stress-diffusion, subject to a stick-slip boundary condition for the velocity and a homogeneous Neumann boundary condition for the extra stress tensor. In the introductory section we develop the thermodynamic foundations of the proposed model, and we document the role of thermodynamics in obtaining critical structural relations between the quantities of interest. These structural relations are then exploited in the mathematical analysis of the governing equations. In particular, the definition of weak solution is motivated by the thermodynamic basis of the model.
  The extra stress tensor describing the elastic response of the fluid is in our case purely spherical, which is a simplification from the physical point of view. The model nevertheless exhibits features that require novel mathematical ideas in order to deal with the technically complex structure of the associated internal energy and the more complicated forms of the corresponding entropy and energy fluxes. The paper provides the first rigorous proof of the existence of large-data global-in-time weak solutions to the governing equations for \emph{coupled thermo-mechanical processes} in viscoelastic rate-type fluids.
\end{abstract}

{\bf Keywords:} Non-Newtonian fluid, viscoelastic fluid, stress-diffusion, incompressible fluid,\\ temperature-dependent material moduli, weak solution, global-in-time existence

{\bf MSC:} 35D30, 35K51, 76A05, 76D99


\section{Introduction}
\label{s1}

We develop a rigorous mathematical theory of existence of large-data global weak solutions to the system of evolutionary nonlinear partial differential equations governing the motion of a class of incompressible \emph{heat-conducting} viscoelastic rate-type fluids with stress-diffusion. In particular, we deal with the \emph{temperature evolution equation coupled with the governing equations for a viscoelastic rate-type fluid}.

A simple system of evolution equations that is a special case of the general system investigated later is the following\footnote{The notation in~\eqref{eq:governing-equations-introduction} is the standard one, and it is fully explained in Section~\ref{s2}.}
,
\begin{subequations}
  \label{eq:governing-equations-introduction}
  \begin{align}
    \label{eq:1}
    \Div \vv &=0, \\
    \label{eq:2}
    \varrho \tecka{\vv} &= - \Grad p + \Div \mathbb{T}_{\delta} + \varrho \bfb, \\
    \label{eq:3}
    \mathbb{T}_{\delta} &= 2 \nu \mathbb{D}(\vv),
    \\
    \label{eq:4}
    \nu_1 \tecka{b} &= \Div(\tilde{\mu}  \Grad b) -  \mu \left(b - 1\right)
    \\
    \label{eq:5}
    \varrho c_V \tecka{\theta} &=
                                 \Div \left( \kappa \Grad \theta \right)
                                 +
                                 2 \nu \mathbb{D}(\vv):\mathbb{D}(\vv)
                                 +
                                 \frac{3\mu^2}{2\nu_1}
                                 \left(
                                 b
                                 +
                                 \frac{1}{b}
                                 -
                                 2
                                 \right)
                                 +
                                 \frac{3\mu \tilde{\mu}}{2\nu_1}
                                 \Grad b \cdot \Grad b
                                 .
  \end{align}
\end{subequations}
Later on in the paper we shall allow all material parameters to be, among other things, temperature-dependent, and the right-hand side of~\eqref{eq:4} and~\eqref{eq:5} will be allowed to contain various nonlinear terms. However, for the sake of ease of exposition in these introductory remarks we shall confine ourselves to~\eqref{eq:governing-equations-introduction}. The system~\eqref{eq:governing-equations-introduction} is a simplified version of the system\footnote{Here $\text{Sym}$ denotes the symmetric part of the corresponding term, that is
  $
  \text{Sym}
  \left(
    \mathbb{B}_{\kappa_{p(t)}}^{-\frac{1}{2}}
    \left(
      \Grad \mathbb{B}_{\kappa_{p(t)}}^{\frac{1}{2}}
    \right)
  \right)
  =
  \frac{1}{2}
  \left(
    \mathbb{B}_{\kappa_{p(t)}}^{-\frac{1}{2}}
    \left(
      \Grad \mathbb{B}_{\kappa_{p(t)}}^{\frac{1}{2}}
    \right)
    +
    \left(
      \Grad \mathbb{B}_{\kappa_{p(t)}}^{\frac{1}{2}}
    \right)
    \mathbb{B}_{\kappa_{p(t)}}^{-\frac{1}{2}}
  \right)
  $, where we use the notation $\left[ \left(\Grad \mathbb{U}\right) \mathbb{V} \right]_{ij} = \left(\frac{\partial U_{ik}}{\partial x}\right) V_{kj}$.}
\begin{subequations}
  \label{eq:governing-equations-Oldroyd-B-stress-diffusion-full}
  \begin{align}
    \label{eq:6}
    \Div \vv &=0,
    \\
    \label{eq:7}
    \varrho \tecka{\vv} &= - \Grad p + \Div \mathbb{T}_{\delta} + \varrho \bfb,
    \\
    \label{eq:8}
    \mathbb{T}_{\delta} &= 2 \nu \mathbb{D}(\vv) + \mu \left(\mathbb{B}_{\kappa_{p(t)}}\right)_{\delta},
    \\
    \label{eq:9}
    \nu_1
    \accentset{\triangledown}{\mathbb{B}}_{\kappa_{p(t)}}
             &=
               -
               \mu \left(\mathbb{B}_{\kappa_{p(t)}} - \mathbb{I}  \right)
               +
               \Div \left( \tilde{\mu} \Grad \mathbb{B}_{\kappa_{p(t)}} \right)
               ,
    \\
    \label{eq:10}
    \varrho
    c_V
    \tecka{\theta}
             &=
               \Div \left(\kappa \Grad \theta \right)
               +
               2 \nu \mathbb{D}(\vv):\mathbb{D}(\vv)
               +
               \frac{\mu^2}{2\nu_1}
               \left(
               \tr \mathbb{B}_{\kappa_{p(t)}}
               +
               \tr \mathbb{B}_{\kappa_{p(t)}}^{-1}
               -
               6
               \right)
    \\
    \notag
             &\qquad\qquad +
               2
               \frac{\mu \tilde{\mu}}{\nu_1}
               \tr
               \left[
               \text{Sym}
               \left(
               \mathbb{B}_{\kappa_{p(t)}}^{-\frac{1}{2}}
               \left(
               \Grad \mathbb{B}_{\kappa_{p(t)}}^{\frac{1}{2}}
               \right)
               \right)
               \cdot
               \text{Sym}
               \left(
               \mathbb{B}_{\kappa_{p(t)}}^{-\frac{1}{2}}
               \left(
               \nabla \mathbb{B}_{\kappa_{p(t)}}^{\frac{1}{2}}
               \right)
               \right)
               \right]
               .
  \end{align}
\end{subequations}
The first part of this system, that is equations~\eqref{eq:6}--\eqref{eq:9}, constitute the governing equations for the flow of an incompressible Oldroyd-B fluid with a so-called \emph{stress-diffusion} term. The nomenclature \emph{stress-diffusion} refers to the last term on the right-hand side of~\eqref{eq:9}. This term is not present in the original Oldroyd-B model, see~\cite{oldroyd.jg:on}, but it is frequently added to various viscoelastic models, see in particular~\cite{el-kareh.aw.leal.lg:existence}, and numerous other works in the field of rheology of complex fluid, see for example~\cite{cates.me.fielding.sm:rheology}, \cite{subbotin.av.malkin.ay.ea:self-organization} or \cite{fardin.m.radulescu.o.ea:stress}, or computational fluid dynamics, see for example~\cite{sureshkumar.r.beris.an:effect}, \cite{thomases.b:analysis}, \cite{chupin.l.ichim.a.ea:stationary} or~\cite{biello.ja.thomases.b:equilibrium}; further references can be found in~\cite{malek.j.prusa.v.ea:thermodynamics} and~\cite{dostalk.m.prusa.v.ea:on}.

The last equation in~\eqref{eq:governing-equations-Oldroyd-B-stress-diffusion-full} is the evolution equation for temperature, and it is derived by the method proposed in~\cite{malek.j.prusa.v.ea:thermodynamics}; see also~\cite{dostalk.m.prusa.v.ea:on}. (In fact \cite{malek.j.prusa.v.ea:thermodynamics} is the first paper that deals, from the thermodynamic point of view, with a temperature evolution equation in the context of viscoelastic rate-type fluids with stress-diffusion.) This equation provides the missing piece of information that complements the evolution equations~\eqref{eq:6}--\eqref{eq:9} for the mechanical quantities, and that in turn allows one to describe coupled thermo-mechanical response of the given fluid. \emph{No restriction to isothermal processes is necessary.} System~\eqref{eq:governing-equations-Oldroyd-B-stress-diffusion-full} can be shown to be consistent with the laws of thermodynamics, and the underlying thermodynamic concepts can be exploited in the mathematical theory.

System~\eqref{eq:governing-equations-introduction} can be seen as the special case of~\eqref{eq:governing-equations-Oldroyd-B-stress-diffusion-full}, where the tensorial quantity $\mathbb{B}_{\kappa_{p(t)}}$ has a simple spherical structure, that is $\mathbb{B}_{\kappa_{p(t)}} = b\mathbb{I}$, where $\mathbb{I}$ denotes the identity tensor. While the reduction of the tensor field $\mathbb{B}_{\kappa_{p(t)}}$ to a scalar field $b$ simplifies the mechanical part of the system, the temperature evolution equation is still part of the system, and if the material parameters (e.g. $\tilde{\mu}$ or $\nu$) depend on the temperature, then the temperature equation is fully coupled to the evolution equations for the mechanical quantities. The analysis of this coupled system therefore still constitutes a challenging mathematical problem.

The paper is structured as follows. A class of simplified viscoelastic rate-type models with stress-diffusion and the spherical tensorial quantity $\mathbb{B}_{\kappa_{p(t)}} = b\mathbb{I}$ is briefly derived in Section~\ref{s2}; the reader is referred to \cite{bulvcek.m.malek.j.ea:pde-analysis}, \cite{malek.j.prusa.v.ea:thermodynamics} and~\cite{dostalk.m.prusa.v.ea:on} for additional details. While our approach is based on the developments presented in~\cite{bulvcek.m.malek.j.ea:pde-analysis}, \cite{malek.j.prusa.v:derivation}, \cite{malek.j.rajagopal.kr.ea:on}, \cite{malek.j.prusa.v.ea:thermodynamics} and \cite{rajagopal.kr.srinivasa.ar:thermodynamic}, from the very beginning we consider only the spherical tensor~$\mathbb{B}_{\kappa_{p(t)}} = b \mathbb{I}$, which substantially simplifies the necessary algebraic manipulations.

Following the derivation of the class of models under consideration, we proceed with its mathematical analysis. Regarding mathematical analysis, a few comments are in order. Recently, a similar class of models was analysed in \cite{bulvcek.m.malek.j.ea:pde-analysis}, the key difference between the present work and \cite{bulvcek.m.malek.j.ea:pde-analysis} being that the presence of stress-diffusion here is \emph{due to irreversible (dissipative) effects, not to energy-storing mechanisms}, as considered in \cite{bulvcek.m.malek.j.ea:pde-analysis}. Consequently, in contrast with~\cite{bulvcek.m.malek.j.ea:pde-analysis}, the models under consideration here contain no Korteweg-type contribution to the stress tensor. Furthermore, the analysis pursued in \cite{bulvcek.m.malek.j.ea:pde-analysis} rests on the assumption that all processes of relevance are isothermal, whereas the \emph{present paper is devoted to the technically more challenging non-isothermal case}. The emphasis on the non-isothermal case also distinguishes our contribution from various studies concerning the solvability of the governing equations for viscoelastic rate-type fluids with a stress-diffusion term; see for example~\cite{el-kareh.aw.leal.lg:existence}, \cite{constantin.p.kliegl.m:note}, \cite{barrett.jw.boyaval.s:existence}, \cite{kreml.o.pokorny.m.ea:on}, \cite{chupin.l.martin.s:stationary}, \cite{lukacov-medvidova.m.notsu.h.ea:energy}, \cite{lukacova-medvidova.m.mizerova.h.ea:global},
\cite{barrett.jw.lu.y.suli.e:oldroyd-b}, \cite{barrett.jw.suli.e:existence*6} or~\cite{bathory.m.bulcek.m.ea:large}.

The first step in the mathematical analysis of the governing equations is the decision regarding the choice of a particular form of the evolution equation for the thermal variables. If one is concerned with strong solutions, then one can track the evolution of either the net total energy or the temperature field alone, and the two approaches are equivalent. On the other hand, when one works with a weak solution, then the two approaches are no longer equivalent, and one of them must be chosen. We use the approach based on the net total energy. (Note that the net total energy is also a primary concept from the physical point of view. The concept of internal energy is in this perspective a derived one.) This choice leads to a formulation where all nonlinearities appear in divergence form, which is then conveniently exploited in the definition of weak solution. We note in this respect that the concept of weak solution based on the notion of net total energy balance has been also gainfully exploited in the context of the Navier--Stokes--Fourier system, see \cite{feireisl.e.malek.j:on} and the subsequent publications~\cite{bulcek.m.feireisl.e.ea:navier-stokes-fourier}, \cite{bulcek.m.malek.j.ea:mathematical*1} and~\cite{bulcek.m.malek.j:internal}, as well as in the context of the governing equations of other complex liquids, see for example~\cite{feireisl.e.fremond.m.ea:new}.

Once the choice of the evolution equation based on the net total energy has been made, the proof of the existence of a large-data global weak solution proceeds by a three-level Galerkin approximation, described in Section~\ref{s3}, and careful passages to the limits in the sequences of Galerkin approximations to the velocity field, the internal energy, and the spherical part of the Cauchy stress tensor using various weak compactness techniques.

We close the paper with concluding remarks and a summary of the main results of the paper.

\section{Governing equations and thermodynamic underpinnings} 
\label{s2}

\subsection{Governing equations}

The basic system of governing partial differential equations in incompressible fluid dynamics stems from the balance equations for mass, linear and angular momenta and energy, and from the mathematical formulation of the second law of thermodynamics. When written in their local forms and assuming that the positive density $\varrho$ is uniform in space and time, these governing equations take the form
\begin{align}
\Div \vv &=0,\quad \varrho \tecka{\vv} = - \Grad p + \Div \mathbb{T}_{\delta} + \varrho \bfb, \qquad \mathbb{T}=\mathbb{T}^{\rm T}, \qquad \varrho \tecka{E} = \Div(\mathbb{T}\vv - \vje) + \varrho \bfb\cdot \vv, \label{A1} \\
\varrho\tecka{\eta} &= \zeta - \Div \vjeta \qquad \textrm{ with } \zeta \ge 0, \label{A2}
\end{align}
where $\vv$ is the velocity, $\mathbb{T}$ is the Cauchy stress tensor, $\mathbb{T}_{\delta} := \mathbb{T} - \frac{1}{3} \left( \tr \mathbb{T} \right) \mathbb{I}$ is its deviatoric (traceless) part, $-p:=\frac13 \tr \mathbb{T}$ is the mean normal stress, $E := \tfrac12 |\vv|^2 + e$ stands for the specific total energy (that is, the sum of the specific kinetic energy and the specific internal energy $e$), $\eta$ is the specific entropy, $\vje$ is the energy (heat) flux, $\vjeta$ is the entropy flux and $\zeta$ represents the entropy production that is supposed to be, in accordance with the second law of thermodynamics, nonnegative; $\bfb$ stands for the given external force. The superscript $\stackrel{\tecka{~}}{\vspace{-5cm}~}$ signifies the material derivative of a given physical quantity; to be more precise,  $\tecka{q} := \frac{\partial q}{\partial t} + \vv \cdot \Grad q$ denotes the material derivative of $q$. Note that it follows from the second and the last equation in \eqref{A1} that the evolution equation for the internal energy takes the form
\begin{equation}
  \label{A1a}
\varrho \tecka{e} = \mathbb{T}: \mathbb{D}(\vv) - \Div \vje,
\end{equation}
where $\mathbb{D}(\vv)$ denotes the symmetric part of the velocity gradient, $\mathbb{D}(\vv) := \frac{1}{2}\left( \Grad \vv + \left(\Grad \vv\right)^T\right)$.

The particular governing equations for the given substance then follow from the specification of energy storage mechanisms in the given substance and the specification of entropy producing mechanisms in the given substance. The specification of the energy storage mechanisms is tantamount to the specification of the \emph{Helmholtz free energy} $\psi$, while the specification of the entropy producing mechanisms is tantamount to the specification of \emph{entropy production} $\zeta$. (Whenever convenient other thermodynamic potentials such as the internal energy or Gibbs free energy may also be used instead of the Helmholtz free energy; see, for example, \cite{rajagopal.kr.srinivasa.ar:gibbs-potential-based}, \cite{srinivasa.ar:on} or \cite{prusa.v.rajagopal.kr.ea:gibbs}.) These quantities will be of interest in the mathematical theory as well, hence we show how to derive~\eqref{eq:governing-equations-introduction} and similar models from the knowledge of the general balance equations~\eqref{A1} and formulae for the Helmholtz free energy $\psi$ and the entropy production~$\zeta$.

Following the approach proposed in~\cite{rajagopal.kr.srinivasa.ar:thermodynamic}, \cite{malek.j.rajagopal.kr.ea:on}, \cite{hron.j.milos.v.ea:on} and~\cite{malek.j.prusa.v.ea:thermodynamics}, the visco-\emph{elastic} response of the given fluid is incorporated in the model via the dependence of the internal energy on the tensorial quantity~$\mathbb{B}_{\kappa_{p(t)}}$ that encodes the elastic response of the fluid\footnote{For alternative approaches to standard viscoelastic rate-type fluids \emph{without} stress-diffusion see especially~\cite{peters.gwm.baaijens.fpt:modelling}, \cite{wapperom.p.hulsen.ma:thermodynamics}, \cite{dressler.m.edwards.bj.ea:macroscopic} or \cite{guaily.a:temperature}. In particular~\cite{dressler.m.edwards.bj.ea:macroscopic} contains a rich bibliography on the subject matter. See also~\cite{pavelka.m.klika.v.ea:multiscale} for a recent discussion of viscoelastic rate-type models in the context of the GENERIC framework.}. (The quantity $\mathbb{B}_{\kappa_{p(t)}}$ is the left Cauchy--Green tensor associated with the elastic response from the so-called natural configuration to the current configuration; see in particular~\cite{malek.j.prusa.v:derivation} and~\cite{malek.j.prusa.v.ea:thermodynamics} for details.) In our case we keep the tensorial character of $\mathbb{B}_{\kappa_{p(t)}}$, but we assume that it is purely spherical, that is
\begin{equation}
\mathbb{B}_{\kappa_{p(t)}} = b \mathbb{I}. \label{aA7}
\end{equation}
In order to abbreviate and simplify the derivation of the model, we can assume that $b$ is just an ``internal variable'', which enters the constitutive equations for the internal energy $e$. (This allows us to abstain from in-depth discussion of the physical meaning of $\mathbb{B}_{\kappa_{p(t)}}$. On the other hand, the fact that $\mathbb{B}_{\kappa_{p(t)}}$ has a clear physical meaning is very useful in the development of new models for complex fluids.) This internal variable is assumed to be governed by the evolution equation
\begin{equation}
\tecka{b} = \Div(\alpha(\theta, b) \Grad b) - h(\theta, b), \label{eq_b}
\end{equation}
where $\alpha$ is a positive and continuous function of $\theta$ and $b$, and the function $h$ is a function to be specified later. Since only $b$ and not $\Grad b$ is considered in the constitutive equation for the internal energy, the stress-diffusion effect has, in the given class of models investigated below, no influence on the energy storage mechanisms. This means that stress-diffusion is interpreted as a purely dissipative mechanism; see~\cite{malek.j.prusa.v.ea:thermodynamics} and~\cite{bulvcek.m.malek.j.ea:pde-analysis} for a discussion. Moreover, the dissipative nature of the stress-diffusion term corresponds to the microscopic interpretation of the stress-diffusion term in~\cite{el-kareh.aw.leal.lg:existence}. Indeed, in~\cite{el-kareh.aw.leal.lg:existence} the stress-diffusion term is related to the ``hydrodynamic resistance of the dumbbell beads'', and the term ``originates in the Brownian motion of the center-of-mass of the dumbbell''.

Once we have identified the quantities that are relevant with respect to the energy storage mechanisms, we introduce the specific Helmholtz free energy $\psi$ through $\psi:= e- \theta \eta$, and we assume that~$\psi$ is a function of~$\theta$ and $b$ that has the following structure
\begin{equation}
\psi = \tilde{\psi}(\theta, b):=\tilde{\psi}_0(\theta) + \frac{1}{\varrho}\tilde{\psi}_1(\theta) \tilde{\psi}_2(b),\label{psi_ass}
\end{equation}
where $\tilde{\psi}_0$ and $\tilde{\psi}_1$ are defined and continuous on $[0,\infty)$, $\tilde{\psi}_2$ is defined and continuous on $(0,\infty)$, and they all belong to $\mathcal{C}^2((0,\infty))$ and satisfy suitable additional assumptions specified below. The decomposition is motivated by the special case we have in mind,
\begin{equation}
\psi = \tilde{\psi}(\theta, b) := - c_V\theta \left(\ln \left(\frac{\theta}{\theta_{\textrm{ref}}}\right) - 1 \right) + \frac{1}{\varrho}\tilde{\psi}_1(\theta) ( b-1 - \ln b),\label{A10}
\end{equation}
where $c_V>0$ denotes the specific heat at constant volume, $\theta_{\textrm{ref}} >0$ is a reference temperature, and $\tilde{\psi}_1(\theta) \ge C_0 >0$ for all $\theta \ge 0$ is a temperature-dependent shear modulus. 
The last term in the Helmholtz free energy formula~\eqref{A10} is motivated by the expression $\tr \mathbb{B}_{\kappa_{p(t)}} - 3 - \ln \det \mathbb{B}_{\kappa_{p(t)}}$, which is an expression that appears in the Helmholtz free energy for a class of viscoelastic fluids\footnote{Regarding this expression, see the early considerations in~\cite{sarti.gc.marrucci.g:thermomechanics}, 
  and a detailed discussion in~\cite{dressler.m.edwards.bj.ea:macroscopic}. For its mathematical underpinnings see~\cite{hu.d.lelievre.t:new} and \cite{boyaval.s.lelievre.t.ea:free-energy-dissipative}. A list of formulae for Helmholtz free energy behind various classical viscoelastic rate-type models is also given in~\cite{dostalk.m.prusa.v.ea:on}.}
.
The purely thermal part, that is the first term on the right-hand side of~\eqref{A10}, would lead, if considered separately, to the classical heat conduction equation with the constant specific heat.

We note that, as in \eqref{psi_ass} and \eqref{A10}, all particular functions of the temperature $\theta$ and $b$ will appear hereafter with the superscript tilde, while general physical quantities will be written without the superscript tilde.  The basic thermodynamic relations give that
\begin{equation}
\eta =\tilde{\eta}(\theta,b):= - \frac{\partial \tilde{\psi}(\theta,b)}{\partial \theta},\qquad e = \tilde{e}(\theta,b):=\theta \tilde{\eta}(\theta,b)+\tilde{\psi}(\theta,b). \label{eq_eta}
\end{equation}
Hence we see that, by taking the material derivative of $e-\theta \eta = \tilde{\psi}(\theta, b)$ and using~\eqref{psi_ass}, we arrive at
\begin{equation}
  \varrho \tecka{e} - \varrho \theta \tecka{\eta} = \varrho \frac{\partial \tilde{\psi}}{\partial b}\tecka{b} = \tilde{\psi}_1(\theta) \tilde{\psi}'_2(b) \tecka{b}. \label{eq_ent}
\end{equation}
(Recall that the density $\varrho$ has been assumed to be a positive constant.) Since we have a formula for the time derivative of $b$, see~\eqref{eq_b}, a formula for the time derivative of $e$, see~\eqref{A1a}, and a formula for the time derivative of $\eta$, see~\eqref{A2}, we obtain the following equation for the entropy production~$\zeta$ appearing in~\eqref{A2}:
\begin{equation}
  \begin{aligned}
    \label{eq:11}
\zeta&=\rho \tecka{\eta} + \Div \vjeta =\rho\frac{\tecka{e}}{\theta} - \rho\frac{\partial \tilde{\psi}}{\partial b}\frac{\tecka{b}}{\theta}+\Div \vjeta\\
 &=\frac{\mathbb{T}: \mathbb{D}(\vv) - \Div \vje}{\theta} - \rho \frac{\partial \tilde{\psi}}{\partial b}\frac{\Div(\alpha(\theta, b) \Grad b) - h(\theta, b)}{\theta}+\Div \vjeta\\
 &=\frac{\mathbb{T}: \mathbb{D}(\vv)}{\theta} + \rho\frac{\partial \tilde{\psi}}{\partial b}\frac{h(\theta, b)}{\theta} - \rho \frac{\partial \tilde{\psi}}{\partial b}\frac{\Div(\alpha(\theta, b) \Grad b)}{\theta}-\frac{\Div \vje}{\theta}+\Div \vjeta.
\end{aligned}
\end{equation}
Now the objective is to manipulate the right-hand side in such a way that all of the divergence terms are grouped together; that is, we want to obtain all divergence terms by the application of the divergence operator to a single quantity. This will help us to identify the fluxes. We see that
\begin{equation}
  \begin{aligned}
    \zeta
&=\frac{\mathbb{T}: \mathbb{D}(\vv)}{\theta} + \rho \frac{\partial \tilde{\psi}}{\partial b}\frac{h(\theta, b)}{\theta} -\Div\left(\rho \frac{\partial \tilde{\psi}}{\partial b}\frac{\alpha(\theta, b) \Grad b}{\theta}+\frac{\vje}{\theta}-\vjeta\right)\\
 &\quad+\rho \frac{\partial^2 \tilde{\psi}}{\partial b^2}\frac{\alpha(\theta, b)}{\theta} |\Grad b|^2  +\frac{1}{\theta^2}\left[ \rho \left( \theta\frac{\partial^2 \tilde{\psi}}{\partial \theta\, \partial b} -\frac{\partial \tilde{\psi}}{\partial b} \right)\alpha(\theta, b) \Grad b-\vje\right]\cdot \Grad \theta \\
 &=\frac{\mathbb{T}: \mathbb{D}(\vv)}{\theta} + \rho \frac{\partial \tilde{\psi}}{\partial b}\frac{h(\theta, b)}{\theta} -\Div\left(\rho\frac{\partial \tilde{\psi}}{\partial b}\frac{\alpha(\theta, b) \Grad b}{\theta}+\frac{\vje}{\theta}-\vjeta\right)\\
 &\quad+\rho \frac{\partial^2 \tilde{\psi}}{\partial b^2}\frac{\alpha(\theta, b)}{\theta} |\Grad b|^2  -\frac{1}{\theta^2}\left[\rho \frac{\partial \tilde{e}(\theta, b)}{\partial b}\alpha(\theta, b) \Grad b+\vje\right]\cdot \Grad \theta.
\end{aligned}\label{A13}
\end{equation}
We proceed by recalling the decomposition of the Helmholtz free energy $\psi$ postulated in \eqref{psi_ass}. In order to guarantee that the entropy production $\zeta$, that is the right-hand side of \eqref{A13}, is nonnegative we require that
\begin{equation}
  \label{eq:12}
\tilde{\psi}''_2(b) \ge 0 \quad \textrm{ for all } b>0, \qquad \tilde{\psi}_1(\theta)\ge 0 \quad \textrm{ for all } \theta\geq 0,
\end{equation}
where the prime symbol $~'$ denotes differentiation with respect to $b$. This guarantees the positivity of the term $\rho \frac{\partial^2 \tilde{\psi}}{\partial b^2}\frac{\alpha(\theta, b)}{\theta} |\Grad b|^2$. (Recall that $\alpha$ is assumed to be positive.) The remaining product terms such as $\frac{\mathbb{T}: \mathbb{D}(\vv)}{\theta}$ must have a structure that guarantees their non-negativity; this helps us to identify a relation between $\mathbb{T}$ and $\mathbb{D}(\vv)$ (cf. (\ref{A14}a) below). Finally, we need the divergence term in~\eqref{A13} to vanish, which yields a relation between the energy flux $\vje$ and the entropy flux $\vjeta$.

Consequently, if we assume that the Cauchy stress tensor~$\mathbb{T}$, the function $h$ in the evolution equation for $b$, the energy flux $\vje$ and the entropy flux $\vjeta$ are given by the formulae
\begin{subequations}
  \label{A14}
  \begin{align}
    \label{eq:15}
    \mathbb{T}_\delta &:= 2\nu(\theta, b) \mathbb{D}(\vv), \\
    \label{eq:16}
    h(\theta, b) &:= C(\theta, b) \tilde{\psi}'_2(b), \\
    \label{eq:14}
\vje &:= - \kappa(\theta,b) \Grad \theta -  \rho \frac{\partial \tilde{e}(\theta,b)}{\partial b}\alpha(\theta, b) \Grad b
       ,
    \\
    \label{eq:13}
\vjeta &:=-\frac{\kappa(\theta,b) \Grad \theta}{\theta}+\rho\frac{\partial^2 \tilde{\psi}(\theta,b)}{\partial \theta \, \partial b}\alpha(\theta, b) \Grad b
         .
\end{align}
\end{subequations}
where the shear viscosity $\nu$, the coefficient $C$ and the heat conductivity $\kappa$ are positive (or at least nonnegative) functions of their variables, then the right-hand side of~\eqref{A13} is indeed non-negative, and we see that the proposed constitutive relations $\eqref{A14}$ conform to the second law of thermodynamics. Note that thanks to~\eqref{psi_ass} we see that the formulae for the fluxes reduce to
\begin{subequations}
  \label{eq:17}
  \begin{align}
    \label{eq:26}
    \vje & = - \kappa(\theta, b) \Grad \theta -  (\tilde{\psi}_1(\theta) - \theta \tilde{\psi}'_1(\theta)) \tilde{\psi}'_2(b) \alpha(\theta, b)\Grad b, \\
    \label{eq:25}
    \vjeta &=- \frac{\kappa (\theta,b) \Grad \theta}{\theta} + \tilde{\psi}'_1(\theta) \tilde{\psi}'_2(b) \alpha(\theta, b) \Grad b.
  \end{align}
\end{subequations}

We then conclude from \eqref{A13} and \eqref{A14} that the formula for the entropy production reduces~to
\begin{equation}
\begin{split}
\theta \zeta &= \frac{\kappa(\theta, b)}{\theta} |\Grad \theta|^2 + 2\nu(\theta, b) |\mathbb{D}(\vv)|^2 + C(\theta, b) \tilde{\psi}_1(\theta) (\tilde{\psi}'_2(b))^2 + \tilde{\psi}_1(\theta) \tilde{\psi}''_2(b) \alpha(\theta, b) |\Grad b|^2.
\end{split}\label{A16}
\end{equation}
Given the requirement \eqref{eq:12} we see that the entropy production is non-negative, and that the second law of thermodynamics is satisfied. (We can also reverse the argument in the sense of the introductory discussion. If we assume entropy production in the form~\eqref{A16}, and if we compare it with the right-hand side of~\eqref{A13}, then we get \eqref{A14}. In this sense the Cauchy stress tensor and the fluxes are determined by the entropy production and the Helmholtz free energy.)

Furthermore, the constitutive equations \eqref{A14} together with the equations \eqref{A1}--\eqref{A1a} lead to the system of governing equations describing the evolution of $\varrho$, $\vv$, $e$, and $b$ that, because of the way in which the system is derived, fulfil the laws of thermodynamics. (In particular, the net total energy in a thermodynamically isolated system is constant, and the net entropy in a thermodynamically isolated system is a nondecreasing function.) If we use the constitutive equations \eqref{A14} in \eqref{A1}, \eqref{A1a} and \eqref{eq_b}, then we arrive at
\begin{subequations}
  \label{A15}
  \begin{align}
    \label{eq:18}
    \Div \vv &= 0 , \\
    \label{eq:19}
    \varrho \tecka{\vv} &=  - \Grad p  + \Div (2 \nu(\theta, b) \mathbb{D} (\vv)) + \varrho \bfb, \\
    \label{eq:20}
    \varrho\tecka{e} &= 2\nu(\theta, b) |\mathbb{D}(\vv)|^2 + \Div(\kappa(\theta, b) \Grad \theta + (\tilde{\psi}_1(\theta) - \theta \tilde{\psi}'_1(\theta)) \tilde{\psi}'_2(b)  \alpha(\theta, b) \Grad b),\\
    \label{eq:21}
\tecka{b} &= \Div(\alpha(\theta, b) \Grad b) - C(\theta, b) \tilde{\psi}'_2(b).
\end{align}
\end{subequations}
This system of equations is a closed set of equations provided that we can find a formula for the temperature $\theta$ in terms of $e$ and $b$.

Our aim is now therefore to explore the circumstances under which one can guarantee the existence of a bivariate function $\theta = \tilde\theta(e,b)$. Note first that as a consequence of \eqref{eq_eta} and the specific formula for the Helmholtz free energy~\eqref{psi_ass}, we have that
\begin{align}
\begin{aligned}\label{Aetae}
\eta = - \frac{\partial \tilde\psi}{\partial \theta} &= - \tilde{\psi}'_0 (\theta) - \frac{1}{\varrho} \tilde{\psi}'_1(\theta) \tilde{\psi}_2(b) =: \tilde\eta(\theta,b), \\
e = \psi + \theta \eta &= (\tilde{\psi}_0(\theta) - \theta \tilde{\psi}'_0(\theta)) + \frac{1}{\varrho} (\tilde{\psi}_1(\theta) - \theta \tilde{\psi}'_1(\theta)) \tilde{\psi}_2(b) =: \tilde{e}(\theta,b);
\end{aligned}
\end{align}
hence
\begin{align}
  \label{eq:22}
  \frac{\partial \tilde{e}(\theta,b)}{\partial \theta} = - \theta \tilde{\psi}_0''(\theta) - \frac{1}{\varrho} \theta \tilde{\psi}_1''(\theta) \tilde{\psi}_2(b) = - \theta \frac{\partial^2 \tilde \psi (\theta,b)}{\partial \theta^2}.
\end{align}
Consequently, if we assume that $\frac{\partial^2 \tilde \psi}{\partial \theta^2} < 0$ for all $\theta >0$ and $b>0$, then it follows that $\tilde{e}$ is strictly increasing, and we can therefore invert $\tilde{e}$ with respect to $\theta$ and consider $\theta$ as a function of $e$ and $b$. (We recall that the expression $- \theta \frac{\partial^2 \tilde \psi (\theta,b)}{\partial \theta^2}$, or equivalently $\frac{\partial \tilde{e}(\theta,b)}{\partial \theta}$, is a particular instance of the general formula $c_V (\theta, \cdot):= -\theta \frac{\partial^2 \psi}{\partial \theta^2}(\theta, \cdot)$ that gives one the specific heat at constant volume~$c_V$ in terms of the derivatives of the Helmholtz free energy/internal energy; see for example~\cite{callen.hb:thermodynamics}. The assumption on the sign of the derivative thus translates to a reasonable requirement on the positivity of the specific heat at constant volume.) Thus under the given assumptions we \emph{deduce}, with $\mathbb{R}_{>0}$ and $\mathbb{R}_{\geq 0}$ signifying the set of all positive and all nonnegative real numbers, respectively, \emph{the existence of} $\tilde{\theta}(e,b)$ with
\begin{equation}
\tilde{\theta} \colon \tilde{e}(\mathbb{R}_{\geq 0}, \mathbb{R}_{>0}) \times \mathbb{R}_{>0} \rightarrow \mathbb{R}_{\geq 0} \; \text{ such that } \; {\tilde \theta}(\tilde{e}(\theta,b), b) = \theta. \label{ex-temp}
\end{equation}
This function can be subsequently used in~\eqref{A15}. The treatment of \eqref{ex-temp} on the level of weak solution will be one of the main difficulties in the proof of existence of a weak solution to the given system of governing equations.

Inserting the formulae for the energy flux~\eqref{eq:14} and the Cauchy stress tensor~\eqref{eq:15} into the evolution equation for the total energy \eqref{A1}$_4$ and the evolution equation for the entropy \eqref{A2}, and recalling that $E:= e+ \tfrac12 |\vv|^2$, we obtain
\begin{equation}
\begin{split} \varrho \tecka{E} &= \Div((-p\mathbb{I} + 2 \nu(\theta, b) \mathbb{D}(\vv))\vv + \kappa (\theta) \Grad\theta) \\
& \qquad + \Div \left( (\tilde{\psi}_1(\theta) - \theta \tilde{\psi}'_1(\theta)) \tilde{\psi}'_2(b)  \alpha(\theta, b) \Grad b\right) + \varrho \bfb\cdot \vv,
\end{split} \label{A16new}
\end{equation}
and
\begin{equation}
\varrho\tecka{\eta} = \zeta + \Div \left(\kappa (\theta, b) \frac{\Grad \theta}{\theta}  - \tilde{\psi}'_1(\theta) \tilde{\psi}'_2(b) \alpha(\theta, b) \Grad b \right),
\label{A17}
\end{equation}
where $\zeta$ is given by \eqref{A16}. Note that equation \eqref{A16new} can be formally obtained by taking the scalar product of~\eqref{eq:19} and $\vv$ and adding the result to~\eqref{eq:20}. It is also worth observing that the equation \eqref{A17} can be obtained from the system \eqref{A15} by multiplying \eqref{eq:20} by $\frac{1}{\theta}$ and subtracting~\eqref{eq:21} multiplied by $\frac{\tilde{\psi}_1(\theta) \tilde{\psi}_2'(b)}{\theta}$, this calculation being motivated by the pair of equalities in~\eqref{eq_ent}.

To summarise, at the level of smooth functions (strong solutions) the system \eqref{A15} is equivalent to the system consisting of the first, the second, and the fourth equation in \eqref{A15}, and \eqref{A16new}. At the level of weak solutions, which we will introduce and study below, this equivalence \emph{holds provided that $\vv$ is an admissible test function in the weak formulation of the equation}~\eqref{eq:19}. This is however not the case in general. From the point of view of the mathematical analysis of the model, it is helpful to start with the equation~\eqref{eq:20} for~$e$ at the level of approximations as this then helps to show that $e$ is nonnegative. However, at a certain point in our analysis we need to invoke the equation \eqref{A16new} for $E$ as all terms that appear on the right-hand side of this equation are in divergence form, which aids passage to the limit in the weak formulation of this equation, while in general, when $|\mathbb{D}(\vv)|^2$ is only integrable, it is unclear how one could deduce that the weak formulation of the equation for $e$ holds. One could of course deduce an inequality instead of an equality in this way, but that would be an unsatisfactory end-result from the physical as well as mathematical point of view.

\subsection{Initial and boundary conditions}
\label{sec:init-bound-cond}

Finally, the system \eqref{A15} is supplemented with \emph{initial conditions and boundary conditions}. We assume that for a $T>0$ and $\Omega \subset \mathbb{R}^3$ being a bounded domain with $\mathcal{C}^{1,1}$-boundary, the following boundary conditions hold on $(0,T)\times \partial \Omega$:
\begin{equation}\label{bc1}
\begin{split}
\vjb:=\alpha(\theta, b) \Grad b\cdot \mathbf{n} &= 0, \\
\vjtheta:=\kappa(\theta,b) \Grad \theta \cdot \mathbf{n} &=0, \\
\vv\cdot\mathbf{n} &= 0,
\end{split}
\end{equation}
and
\begin{equation}\label{bc2}
\left\{\begin{aligned}
\vv_\tau&=\mathbf{0} \ &&\mbox{if and only if} \ |\mathbf{s}|\leq s_*  \ ,\\
 \mathbf{s}&=s_*\frac{\vv_\tau}{|\vv_\tau|}+\gamma_*\vv_\tau \ &&\mbox{if and only if} \ |\mathbf{s}|  > s_* \ ,
\end{aligned}\right.
\end{equation}
where $ \mathbf{s}:=-(\mathbb{T}\mathbf{n})_\tau=-(\mathbb{S}\mathbf{n})_\tau$, $\mathbf{z}_\tau := \mathbf{z} - (\mathbf{z}\cdot \mathbf{n}) \mathbf{n}$ and $\mathbf{n}$ is the unit outward normal vector to~$\partial\Omega$. It is more convenient to rewrite the condition \eqref{bc2} as
an implicitly constituted boundary condition of the form $\mathbf{g}(\mathbf{s}, \vv_{\tau}) = \mathbf{0}$; that is,
\begin{equation} \label{bc2new}
\gamma_* \vv_\tau=\frac{(|\mathbf{s}| - s_*)^{+}}{|\mathbf{s}|}\mathbf{s}.
\end{equation}
The reasons for this choice of boundary condition are twofold. First, it covers several interesting cases, such as slip, Navier's slip and stick-slip. (Slip type boundary conditions are popular in rheology, see for example~\cite{hatzikiriakos.sg:wall}, hence we are not compromising the physical relevance of the boundary condition.) In addition, the frequently used no-slip boundary condition can be seen as an approximation of the physically more realistic stick-slip boundary condition when $|\mathbf{s}|$ is small, in the sense that it is below the threshold value~$s_*$.
Secondly, our choice of boundary condition enables us to show that the mean normal stress $-p$ is an integrable function. This property plays an important role in the proof of our main result, because $p$ appears in the evolution equation for the net total energy \eqref{A16new}.

Finally, we impose the initial conditions
\begin{equation}
\vv(0, \cdot)=\vv_0, \qquad \theta(0,\cdot)={\theta}_0 \quad \textrm{ and } \quad b(0, \cdot) = b_0 \qquad \mbox{in} \ \Omega,\\
\label{ics}
\end{equation}
where the given functions $\vv_0$, ${\theta}_0$ and $b_0$ satisfy, for some $0<b_{\min}\le 1\le b_{\max}<\infty$,
%
\begin{alignat}{2}\label{v0-cond}
\begin{aligned}
&\!\Div \vv_0 = 0 \;\textrm{ in } \Omega, \quad \;\vv_0\cdot\mathbf{n} = 0 \;&&\textrm{ on } \partial \Omega, \\
&0<b_{\textrm{min}} \le b_0 \le b_{\textrm{max}}  &&\textrm{ in } \Omega, \\
&0<\theta_0 &&\textrm{ in }\Omega.
\end{aligned}
\end{alignat}

\subsection{Assumptions on the material functions}

Since the density is assumed to be uniform in time and space, without loss of generality we can suppose that $\varrho\equiv 1$. This will help us to ensure
that various long formulae are more transparent, and \emph{we follow this practice throughout the rest of the paper}.

We recall several observations made in the preceding section, in particular identities~\eqref{A13} and~\eqref{A16}, and we precisely state the assumptions on the material functions. Motivated by the example given in~\eqref{A10} and by the thermodynamic considerations stated above, see especially~\eqref{eq:12} and the discussion following~\eqref{eq:22}, we formulate our structural assumptions on the Helmholtz free energy~$\tilde{\psi}$.
\bcol
\begin{itemize}
\item[\textbf{(A1)}] Let $\psi = \tilde{\psi}(\theta,b)$ be of the form \eqref{psi_ass}, that is
 \begin{equation*}
\tilde{\psi}(\theta, b):=\tilde{\psi}_0(\theta) + \tilde{\psi}_1(\theta) \tilde{\psi}_2(b),
\end{equation*}
where 
$\tilde{\psi}_0$, $\tilde{\psi}_1$ and $\tilde{\psi}_2$ belong to $\mathcal{C}^2 ((0,\infty))$ and satisfy
\[
  \tilde{\psi}_0''(s) <0, \quad \tilde{\psi}_1''(s) \le 0, \quad \tilde{\psi}'_1(s)\ge 0,   \quad \textrm{ and } \quad  \tilde{\psi}_2''(s) \ge 0 \quad \textrm{ for all } s>0.
\]
\end{itemize}
\ecol
Next, we introduce the notation
\begin{equation}\label{e0e1}
\tilde{e}_0(\theta):= \tilde{\psi}_0(\theta) - \theta\tilde{\psi}'_0(\theta) \quad \textrm{ and }
\quad \tilde{e}_1(\theta):= \tilde{\psi}_1(\theta) - \theta\tilde{\psi}'_1(\theta),
\end{equation}
and with the help of \eqref{e0e1} we define
\begin{equation}\label{eeta:def}
\begin{aligned}
  \tilde{\eta}(\theta,b)&:=-\frac{\partial \tilde{\psi}(\theta,b)}{\partial \theta} = -\tilde{\psi}'_0(\theta) -\tilde{\psi}'_1(\theta) \tilde{\psi}_2(b),\\
  \tilde{e}(\theta,b)&:= \tilde{\psi}(\theta, b) + \theta \tilde{\eta}(\theta,b) =
  \tilde{e}_0(\theta) + \tilde{e}_1(\theta) \tilde{\psi}_2(b).
\end{aligned}
\end{equation}
The  assumption \textbf{(A1)} on $\tilde{\psi}_0$ and $\tilde{\psi}_1$ also implies that $\tilde{e}_0$ and $\tilde{e}_1$ are increasing and nondecreasing, respectively,  as
\begin{equation}\label{incr}
  \tilde{e}_0'(s) = - s \tilde{\psi}_0''(s) >0 \quad \textrm{ and } \quad \tilde{e}_1'(s) = - s \tilde{\psi}_1''(s) \ge 0 \quad \textrm{ for all } s>0.
\end{equation}

The next assumption is aimed at controlling the behaviour of $\tilde{\psi}_1(\theta)$ at $\theta = 0$ and $\theta = +\infty$; it also regulates the behaviour of $\tilde{\psi}_2(b)$ at $b=1$.
\bcol
\begin{itemize}
\item[\textbf{(A2)}] Let $\tilde{\psi}_1 \in \mathcal{C}^2([0,\infty))$  and $\tilde{\psi}_2\in \mathcal{C}^2((0, \infty))$ satisfy 
\begin{equation*}
\tilde{\psi}_1(0) \ge 0\qquad \mbox{and} \qquad \tilde{\psi}_2(1) = \tilde{\psi}_2'(1) = 0,
\end{equation*}
and suppose that there exists a positive constant $C$ such that
\begin{equation*}
-s^2 \tilde{\psi}_1''(s) \leq C \quad \mbox{for all $s \in [0,\infty)$}.
\end{equation*}
\end{itemize}
\ecol
The first condition in \textbf{(A2)} combined with \textbf{(A1)} implies that $\tilde{\psi}_1$ is nonnegative; recall that $\tilde{\psi}_1$ is nondecreasing. In addition, the first condition also leads to $\tilde{e}_1(0)\ge 0$ and consequently, as $\tilde{e}_1$ is increasing, see \eqref{incr}, the function $\tilde{e}_1$ is nonnegative. The second condition mimics the example~\eqref{A10} above  and asserts that $\tilde{\psi}_2$ has minimum at $b=1$. This is a consequence of the convexity of $\tilde{\psi}_2$ stated in {\bf (A1)}. Furthermore, since $\tilde{\psi}_2'(1) = 0$, we see that the function $h(\theta, b)$ on the right-hand side of the evolution equation for $b$, see~\eqref{eq_b}, \eqref{eq:16} and~\eqref{eq:21}, vanishes at $b=1$, which means that the ``equilibrium state'' $b \equiv 1$ is a trivial solution of~\eqref{eq:21}.

Moreover, since $\tilde\psi_1$ and $\tilde\psi_2$ are nonnegative, it directly follows from {\bf (A1)} that $\tilde\psi$ is strictly concave with respect to the variable $\theta$ and convex with respect to the variable $b$; that is
\[
  \frac{\partial^2 \tilde\psi}{\partial \theta^2} <0 \quad \textrm{ and } \quad   \frac{\partial^2 \tilde\psi}{\partial b^2} \ge 0.
\]
The first requirement is the classical strict concavity condition for the Helmholtz free energy with respect to temperature, and we again recall that it guarantees the positivity of the specific heat at constant volume. The second condition is well known in theory of elasticity: the stored energy/Helmholtz free energy must be, if we consider the scalar case, a convex function of the stretch; see for example~\cite{ericksen.jl:introduction}.

Note that $|\tilde\psi|$ may explode as $\theta \rightarrow + \infty$, and also as $b \rightarrow 0_+$ and $b \rightarrow + \infty$; similarly $|\frac{\partial \tilde{\psi}}{\partial \theta}|$ may explode as $\theta \rightarrow 0_+$. The next assumption is introduced in order
to impose suitable growth conditions on the Helmholtz free energy, and thereby, indirectly, also on the internal energy and the entropy.
\bcol
\begin{itemize}
\item[\textbf{(A3)}] Let
$$
\lim_{\theta\to 0+} \tilde{\psi}_0'(\theta) = +\infty \quad \textrm{ and } \quad \lim_{\theta\to 0+} \theta \tilde{\psi}_0'(\theta) = \lim_{\theta\to 0+} \tilde{\psi}_0(\theta)= 0.
$$
\end{itemize}
\ecol
It follows from the definition of $\tilde{e}_0$, see \eqref{e0e1},  and from \textbf{(A3)} that
\begin{equation}\label{e0to0}
\lim_{\theta\to 0+} \tilde{e}_0 (\theta) = 0,
\end{equation}
which implies, in accordance with the above properties, that $\tilde{e}_0$ is strictly increasing and positive on~$(0,\infty)$. The next assumption imposed on $\tilde{\psi}_0$ and $\tilde{\psi}_1$ helps to control the behaviour of $\tilde{\psi}$ as~$\theta \to +\infty$.
\bcol
\begin{itemize}
\item[\textbf{(A4)}] Let $C_1$ and $C_2$ be two positive constants such that
\begin{equation*}
  C_1\le - s \tilde{\psi}_0''(s) \le C_2 \quad \mbox{for all $s>0$,}
\end{equation*}
and
\begin{equation*}
\int_0^{\infty} - s \tilde{\psi}_1''(s)\; \dd s \le C_2.
\end{equation*}
\end{itemize}
\ecol
The first assumption in \textbf{(A4)} combined with \eqref{incr} leads to the two-sided bound
$$
C_1\le \tilde{e}_0'(s)\le C_2,
$$
which together with \eqref{e0to0} gives
\begin{equation}\label{ule0}
C_1 s \le \tilde{e}_0(s)\le C_2s.
\end{equation}
The second assumption in \textbf{(A4)} and \eqref{incr} lead to the two-sided bound
\begin{equation}\label{ule1}
\tilde{e}_1(0)\le \tilde{e}_1(s)\le \tilde{e}_1(0) +C_2.
\end{equation}
Finally, we need to impose suitable conditions on the other functions/parameters featuring in the model under consideration.
\bcol
\begin{itemize}
\item[\textbf{(A5)}] Suppose that the functions $\nu,\kappa,  \alpha, h\in \mathcal{C}(\R_{\geq 0} \times \R_{> 0})$, and that they satisfy, for all $\theta\in \R_{ \geq 0}$ and $b\in (b_{\min},b_{\max})$, where $0 < b_{\min} < 1 < b_{\max}$, the following bounds:
\begin{align}
C_1\le \nu(\theta,b),\kappa(\theta,b),  &~\!\alpha(\theta,b) \le C_2,\label{alpha1}\\
0\le h(\theta,b)(b-1) \quad \textrm{ and } & \quad |h(\theta,b)| \le C_2. \label{alpha2}
\end{align}
\end{itemize}
\ecol

The rationale for the assumptions on the function $h(\theta, b)$ is the following. If we are interested in the governing equations~\eqref{eq:governing-equations-introduction}, that embody a scalar version of the diffusive Oldroyd-B model, then we need to set the specific Helmholtz free energy in the form~\eqref{A10}, that is we set\footnote{For the sake of clarity, we focus only on the dependence of the given functions on $b$, and we set all unimportant---for the discussion hereafter---physical constants to one.}
\begin{equation}
  \label{eq:23}
  \tilde{\psi}_2(b) := \left( b-1 - \ln b \right),
\end{equation}
and we get $\tilde{\psi}_2'(b)=1 - \frac{1}{b}$ and $\tilde{\psi}_2''(b)=\frac{1}{b^2}$, and we see that $\tilde{\psi}_2(b)$ satisfies assumptions \textbf{(A1)} and~\textbf{(A2)}. Furthermore, the function $h(\theta, b)$ on the right-hand side of the evolution equation for $b$, see~\eqref{eq_b}, needs to be fixed as~$h(\theta, b) := (b-1)$. This means that the function $C$ in the constitutive relation~\eqref{eq:16} is fixed as $C(\theta, b):=b$. The condition~\eqref{alpha2} is clearly satisfied, and, moreover, the condition on the non-negativity of the entropy production, see~\eqref{A16}, is satisfied as well.

Similarly, if we keep the Helmholtz free energy the same as in the previous case, and if we choose ~$h(\theta, b) := \alpha b^2 + (1-2\alpha)b - (1-\alpha)$, where $\alpha \in [0,1]$, then we would be dealing with a scalar version of the diffusive Giesekus model, see~\cite{giesekus.h:simple} for the original (non-diffusive) model and \cite{dostalk.m.prusa.v.ea:on} and \cite{dostalk.m.prusa.v.ea:unconditional} for a thermodynamic background of this model. A straightforward computation again shows that~\eqref{alpha2} is again satisfied. (Note that if $\alpha=0$, then this model reduces to the previous one.) Consequently, the structural condition $\eqref{alpha2}$ covers important viscoelastic rate-type models.



We close this section by stating and proving an auxiliary property of the entropy $\tilde{\eta}$, which follows from the assumptions \textbf{(A1)}, \textbf{(A2)}
and \textbf{(A4)}.

\begin{Lemma}\label{L:psi}
With $C_1$ and $C_2$ as in \emph{\textbf{(A4)}} we have, for all $s>0$ and all $b>0$, that
\begin{align}
\tilde{\eta}(s,b)\le -C_1|\ln s| +  C(1+s). \label{etaup}
\end{align}
where $C:=\max(C_1 + C_2, |\tilde{\psi}_0'(1)|)$.
\end{Lemma}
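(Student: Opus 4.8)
The plan is to read $\tilde\eta$ off its definition in~\eqref{eeta:def}, namely $\tilde\eta(s,b) = -\tilde\psi_0'(s) - \tilde\psi_1'(s)\,\tilde\psi_2(b)$, and to discard the product term. Indeed, \textbf{(A1)} gives $\tilde\psi_1'(s)\ge 0$ and $\tilde\psi_2''\ge 0$, while \textbf{(A2)} gives $\tilde\psi_2(1)=\tilde\psi_2'(1)=0$; convexity then forces $b=1$ to be a global minimiser of $\tilde\psi_2$ with value $0$, so $\tilde\psi_2(b)\ge 0$ for all $b>0$. Hence $-\tilde\psi_1'(s)\tilde\psi_2(b)\le 0$, and the whole claim reduces to the one-variable bound $-\tilde\psi_0'(s)\le -C_1|\ln s| + C(1+s)$ for all $s>0$.

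To obtain this, I would integrate the two-sided estimate $C_1\le -s\tilde\psi_0''(s)\le C_2$ from \textbf{(A4)} — equivalently $-C_2/s\le \tilde\psi_0''(s)\le -C_1/s$ — starting from $s=1$, distinguishing the regimes $0<s<1$ and $s\ge 1$. For $0<s<1$ the upper bound on $\tilde\psi_0''$ yields $\tilde\psi_0'(s)-\tilde\psi_0'(1) = -\int_s^1\tilde\psi_0''(t)\dd t \ge C_1\int_s^1 t^{-1}\dd t = C_1|\ln s|$, hence $-\tilde\psi_0'(s)\le -\tilde\psi_0'(1) - C_1|\ln s| \le |\tilde\psi_0'(1)| - C_1|\ln s| \le C(1+s) - C_1|\ln s|$, using $C\ge|\tilde\psi_0'(1)|$ and $s>0$. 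For $s\ge 1$ the lower bound on $\tilde\psi_0''$ gives $\tilde\psi_0'(s)-\tilde\psi_0'(1)=\int_1^s\tilde\psi_0''(t)\dd t\ge -C_2\ln s$, so $-\tilde\psi_0'(s)+C_1|\ln s|\le -\tilde\psi_0'(1) + (C_1+C_2)|\ln s|$; now invoke $|\ln s|=\ln s\le s$ for $s\ge 1$, together with $C\ge C_1+C_2$ and $-\tilde\psi_0'(1)\le|\tilde\psi_0'(1)|\le C$, to bound the right-hand side by $C + Cs = C(1+s)$. This closes the estimate.

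The statement is essentially elementary once $\tilde\psi_2\ge 0$ is secured, so I do not expect a serious obstacle; the only points requiring a little care are (i) combining the convexity of $\tilde\psi_2$ with the normalisation at $b=1$ from \textbf{(A2)} to deduce $\tilde\psi_2\ge 0$, and (ii) keeping track of the (possibly negative) sign of $\tilde\psi_0'(1)$ — which is precisely what the choice $C=\max(C_1+C_2,|\tilde\psi_0'(1)|)$ is designed to absorb — and of the two separate logarithmic regimes $s<1$ and $s\ge 1$, where $|\ln s|$ is controlled differently (by $-\tilde\psi_0'(1)$ alone versus by the linear term $Cs$). Continuity of $\tilde\psi_0'$ and $\tilde\psi_0''$ on $(0,\infty)$, guaranteed by \textbf{(A1)}, makes all the integrals above proper, so no further justification is needed there.
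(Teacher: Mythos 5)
Your proposal is correct and follows essentially the same route as the paper: drop the term $-\tilde\psi_1'(s)\tilde\psi_2(b)\le 0$ using the nonnegativity of $\tilde\psi_2$ (which the paper obtains by Taylor expansion about $b=1$, equivalent to your convexity argument), then integrate the two-sided bound in \textbf{(A4)} from $s=1$, treating the regimes $s\le 1$ and $s\ge 1$ separately exactly as the paper does.
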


\begin{proof}
Taylor expansion of $\tilde{\psi}_2$ about $b=1$ gives $\tilde{\psi}_2(b) = \tilde{\psi}_2(1) + (b-1)\tilde{\psi}_2'(1) + \frac{1}{2}(b-1)^2 \tilde{\psi}_2''(\sigma)$, where $\sigma \in (0,b)$; hence, by Assumptions \textbf{(A1)} and \textbf{(A2)}, $\tilde{\psi}_2(b) = \frac{1}{2}(b-1)^2 \tilde{\psi}_2''(\sigma)\geq 0$ for all $b > 0$. Thus, by \eqref{eeta:def} and assumptions \textbf{(A1)} and \textbf{(A4)},
$$
\begin{aligned}
\tilde{\eta}(s,b) &= -\tilde{\psi}_0'(s) - \tilde{\psi}_1'(s)\tilde{\psi}_2(b) \le -\tilde{\psi}_0'(s) \\
&=\left\{
\begin{aligned}
&-\int_1^s \tilde{\psi}_0''(t)\dt -\tilde{\psi}_0'(1)\le C_2 \ln s -\tilde{\psi}_0'(1), &&\mbox{for } s\ge 1,\\
& \int_{s}^1 \tilde{\psi}_0''(t)\dt  -\tilde{\psi}_0'(1)\le C_1 \ln s - \tilde{\psi}_0'(1), &&\mbox{for } s\le 1\\
\end{aligned}
\right.
\\
&=\left\{
\begin{aligned}
& - C_1 (\ln s) + (C_1 + C_2) \ln s -\tilde{\psi}_0'(1), &&\mbox{for }s\ge 1,\\
& - C_1 (-\ln s) - \tilde{\psi}_0'(1), &&\mbox{for } s\le 1\\
\end{aligned}
\right.\\
&\leq  \left\{
\begin{aligned}
& - C_1 |\ln s| + (C_1 + C_2) s  + |\tilde{\psi}_0'(1)|, &&\mbox{for }s\ge 1,\\
& - C_1 |\ln s| +  |\tilde{\psi}_0'(1)|, &&\mbox{for } s\le 1,\\
\end{aligned}
\right.
\end{aligned}
$$
which implies the stated result with $C:=\max(C_1 + C_2, |\tilde{\psi}_0'(1)|)$.
\end{proof}

\section{Function spaces and the statement of the main results}
In this section, we provide a precise statement of the main results of the paper, concerning the existence of large-data global weak solutions of the evolutionary system under consideration. In order to formulate these, we need to introduce suitable function spaces.

\subsection{Notation}
Throughout the paper we shall use the standard notations for Lebesgue, Sobolev, Bochner and Sobolev--Bochner spaces:  $(L^p(\Omega), \|\cdot\|_p), (W^{s,p}(\Omega), \|\cdot\|_{s,p})$,  $L^p(0,T; X)$ and $W^{1,p}(0,T;X)$, with $p$~satisfying $1\le p \le \infty$, $s\in \mathbb{R}_+$,  and~$X$ being a Banach space. In order to distinguish between scalar-, vector-, and tensor-valued functions, we use small letters for scalars, small bold letters for vectors and capital blackboard bold letters for tensors. Moreover, in order to simplify the notation, for any Banach space $X$ we shall use the  abbreviation $$X^k:= \underset{k \text{-times}}{\underbrace{X \times \cdots \times X}}.$$ We shall also use the notation $Q:=(0,T) \times \Omega$ to denote the space-time cylinder in which the problem in studied.

We require that $\Omega$ is a $\mathcal{C}^{1,1}$ domain and we define (here $r':=r/(r-1)$ denotes the conjugate exponent to $r$)
\begin{align*}
W^{1,r}_{\bn}&:=\big\{ \vv \in W^{1,r}(\Omega)^3: \; \vv \cdot \bn =0 \textrm{ on } \partial \Omega \big\},\\
W^{1,r}_{\bn,\diver}&:=\big\{ \vv \in W^{1,r}_{\bn}: \; \diver \vv =0 \textrm{ in }  \Omega \big\},\\
W^{-1,r'}_{\bn}&:=\left(W^{1,r}_{\bn} \right)^{*}, \quad W^{-1,r'}_{\bn,\diver}:=\left(W^{1,r}_{\bn,\diver} \right)^{*},\\
L^2_{\bn,\diver}&:= \overline{W^{1,2}_{\bn,\diver}}^{\|\cdot \|_{2}}.
\end{align*}
All of the above spaces are Banach spaces, which are for $r\in [1,\infty)$ separable and for $r\in (1,\infty)$ reflexive.
We shall also employ the following notation for functions having zero mean value:
$$
L^r_0(\Omega):=\{u\in L^r(\Omega): \, \int_{\Omega} u \dd x =0\}.
$$
Since we shall work with sequences that are precompact only in the space of measures (bounded sequences in $L^1$),  we denote the space of Radon measures on a set $V$ by $\mathcal{M}(V)$. 

In addition, for any measurable subset $V$ of $\Omega$ we shall write $(a,b)_V:=\int_V ab \dd x $ whenever $a\in L^r(V)$ and $b\in L^{r'}(V)$, and in particular if $V=\Omega$ we shall omit writing the associated subscript in what follows. We shall use the same notational convention for vector- and tensor-valued functions. In the case of the duality pairing between a Banach space~$X$ and its dual space~$X^*$ we shall use the abbreviated notation $\langle a, b \rangle:=\langle a, b\rangle_{X^*,X}$ whenever $a\in X^*$ and $b\in X$ and the meaning of the duality pairing is clear from the context.

Finally, we shall employ the standard notation $L^p(0,T; X)$ for Bochner spaces of $X$-valued $L^p$ functions defined on $(0,T)$, and the space of $X$-valued weakly continuous functions defined on $(0,T)$ is denoted by
$$
\mathcal{C}_{w}(0,T; X):=\{u\in L^{\infty}(0,T; X); \; \lim_{t\to t_0} \langle u(t), \varphi\rangle = \langle u(t_0), \varphi\rangle \quad \forall \, \varphi \in X^*,\; \forall \, t_0 \in [0,T]\},
$$
and for a Banach space $X$, $\mathcal{M}(0,T;X^*)$ will denote the dual space of $\mathcal{C}([0,T];X)$.
\subsection{Statement of the main theorem}

We are now ready to state the main result of the paper, concerning the existence of large-data global weak solutions to the system \eqref{A15} subject to the boundary and initial conditions \eqref{bc1}, \eqref{bc2} and the initial conditions \eqref{ics}. We emphasise that, as the density $\varrho$ has been assumed to be constant, without loss of generality it has been taken to be identically 1 in what follows.
\begin{Theorem}\label{main:T}
Suppose that $\Omega\subset \mathbb{R}^3$ is a bounded open $\mathcal{C}^{1,1}$ domain and $T>0$ is arbitrary. Assume that the given  Helmholtz free energy $\tilde{\psi}:\mathbb{R}_+ \times \mathbb{R}_+ \to \mathbb{R}$ satisfies assumptions \textbf{(A1)}--\textbf{(A4)} and that the corresponding internal energy $\tilde{e}(\theta,b)$ and the entropy $\tilde{\eta}(\theta,b)$ are defined in \eqref{eeta:def}. Assume that the functions $\nu$, $\kappa$, $\alpha$, $h:\mathbb{R}_+ \times \mathbb{R}_+ \to \mathbb{R}$ fulfil the assumption \textbf{(A5)} and $s_*>0$ and $\gamma_*>0$ are given numbers. Then, for arbitrary initial data $(\vv_0,\theta_0,b_0)$ satisfying $\theta_0>0$, $b_0\in [b_{\min},b_{\max}]$ almost everywhere in $\Omega$ with $0<b_{\min} \le 1\le b_{\max}< \infty$  and
\begin{align}\label{ics1}
\vv_0 \in L^2_{\bn,\diver},\qquad  \tilde{e}(\theta_0,b_0)\in L^1(\Omega),\qquad  \tilde{\eta}(\theta_0,b_0) \in L^{1}(\Omega),
\end{align}
and for an arbitrary body force $\bfb \in L^2(0,T; (W^{1,2}_{\bn})^*)$, there exists a weak solution $(\vv, \theta, b,p)$, with $\theta$ and $b$ nonnegative, to the system \eqref{A15}, \eqref{bc1}, \eqref{bc2}, \eqref{ics}, in the sense that
\begin{align}
\vv&\in L^2(0,T; W^{1,2}_{\bn,\diver}) \cap \mathcal{C}_{\textrm{weak}}(0,T; L^2_{\bn,\diver}),\label{FS1}\\
\partial_t \vv &\in L^{\frac43}(0,T; (W^{1,2}_{\bn})^*),\label{FS2}\\
E&\in L^{\infty}(0,T; L^1(\Omega))\cap W^{1,\frac{10+\delta}{9+\delta}}(0,T; (W^{1,10+\delta}(\Omega))^*)&&\textrm{for all } \delta>0,\label{FS3}\\
e,\theta & \in L^{\frac54-\delta}(0,T; W^{1,\frac54 -\delta}(\Omega)) \cap L^{\frac53-\delta}(0,T; L^{\frac53-\delta}(\Omega))&&\textrm{for all }\delta\in\big(0,\textstyle{\frac14}\big),\label{FS4}\\
\partial_t e &\in \mathcal{M}(0,T; (W^{1,10+\delta}(\Omega))^*) &&\textrm{for all } \delta>0,\label{FS5}\\
b& \in L^2(0,T; W^{1,2}(\Omega))\cap W^{1,2}(0,T; (W^{1,2}(\Omega))^*),\label{FS6}\\
b&\in [b_{\min},b_{\max}] &&\textrm{a.e. in }Q,\label{FS7}\\
\eta&\in L^{\infty}(0,T; L^1(\Omega))\cap L^2(0,T; W^{1,2}(\Omega)),\label{FS8}\\
\partial_t \eta &\in \mathcal{M}(0,T; (W^{1,10+\delta}(\Omega))^*)&&\textrm{for all } \delta>0,\label{FS9}\\
 p&\in L^{\frac{5}{3}}(0,T; L^{\frac53}_0(\Omega))\cap L^{\frac43}(0,T; L^2_0(\Omega)),\label{FS10}\\
\bs &\in L^{2}(0,T; L^{2}(\partial \Omega)^3),\label{FS11}
\end{align}
are linked through the relations
\begin{equation}\label{constitutive}
e=\tilde{e}(\theta,b), \quad \eta=\tilde{\eta}(\theta,b), \quad E=e+\frac{|\vv|^2}{2} \qquad \textrm{ a.e. in } Q,
\end{equation}
\begin{equation}\label{slip}
\gamma_*\vv_{\mathbf{\tau}} = \frac{(|\bs|-s_*)_+}{|\bs|}\bs \qquad \textrm{ a.e. in } (0,T)\times \partial \Omega,
\end{equation}
and satisfy the following:
\begin{equation}
\begin{split}\label{eq:vv}
&\langle \partial_t\vv, \vw\rangle + \int_{\Omega}(2\nu(\theta,b)\mathbb{D}(\vv)-\vv\otimes \vv ):\Grad \vw\dx+ \int_{\partial \Omega} \bs\cdot \vw \dS
=\langle \bfb, \vw\rangle + \int_{\Omega} p \Div \vw \dx
\end{split}
\end{equation}
for all $\vw\in W^{1,2}_{\bn}$ and almost all $t\in (0,T)$;
\begin{equation}\label{eq:b}
\begin{split}
 \langle \partial_t b, u \rangle  + \int_{\Omega}\left(\alpha(\theta,b) \Grad b - b\vv \right)\cdot \Grad u +h(\theta,b)u \dx=0
\end{split}
\end{equation}
for all $u\in W^{1,2}(\Omega)$ and almost all $t\in (0,T)$;
\begin{equation}\label{eq:E}
\begin{split}
 \langle \partial_t E, u\rangle  &+ \int_{\Omega}\left( \kappa(\theta,b)\Grad \theta
-(E+p)\vv\right) \cdot \Grad u\dx +\int_{\partial \Omega} \bs\cdot \vv u \dS\\
&+\int_{\Omega} \alpha(\theta,b) \frac{\partial \tilde{e}(\theta,b)}{\partial b}\Grad b  \cdot \Grad u \dx=\langle \bfb, \vv u\rangle
\end{split}
\end{equation}
for all $u\in W^{1,\infty}(\Omega)$ and almost all $t\in (0,T)$;
\begin{equation}\label{eq:entropy}
\begin{aligned}
\langle \partial_t \eta, u \rangle \dx & +\int_{\Omega} \left(\frac{\kappa(\theta,b) \Grad \theta}{\theta}-\frac{\partial^2 \tilde{\psi}(\theta,b)}{ \partial \theta \, \partial b }\alpha(\theta, b) \Grad b -\eta \vv \right) \cdot \Grad u \dx\\
&\ge \int_{\Omega} \frac{\kappa(\theta,b)|\Grad \theta|^2u}{\theta^2}+  \frac{2\nu(\theta, b)|\mathbb{D}(\vv)|^2u}{\theta} \dd x \\
&\qquad \quad +\int_{\Omega} \frac{\alpha(\theta,b)}{\theta}\frac{\partial^2 \tilde{\psi}(\theta,b)}{\partial b^2}|\Grad b|^2u  +\frac{\partial \tilde{\psi}(\theta,b)}{\partial b}\frac{h(\theta,b)u}{\theta} \dx
\end{aligned}
\end{equation}
for all nonnegative $u\in W^{1,\infty}(\Omega)$ and almost all $t\in (0,T)$. In addition, the triple $(\vv, \theta, b)$ fulfils the following inequality constraint, which is a consequence of the second law of thermodynamics:
\begin{equation}\label{eq:ienergy}
\begin{split}
\langle \partial_t e, u\rangle  &+ \int_{\Omega}\left( \kappa(\theta,b)\Grad \theta
-e\vv\right) \cdot \Grad u\dx +\int_{\Omega} \alpha(\theta,b) \frac{\partial \tilde{e}(\theta,b)}{\partial b}\Grad b  \cdot \Grad u \dx\\
&\ge\int_{\Omega} 2\nu(\theta,b)\,|\mathbb{D}(\vv)|^2 u\dx
\end{split}
\end{equation}
for all nonnegative $u\in W^{1,\infty}(\Omega)$ and almost all $t\in (0,T)$.

The initial data are attained in the following sense:
\begin{equation}\label{attint}
\lim_{t\to 0_+} \big(\|\vv(t)-\vv_0\|_2 + \|b(t)-b_0\|_2 + \|e(t)-\tilde{e}(\theta_0,b_0)\|_1 + \|\theta(t)-\theta_0\|_1\big)=0.
\end{equation}
Furthermore, the following bound holds:
\begin{equation}\label{folbound}
\int_0^T \int_{\Omega} \frac{|\Grad \theta|^2}{\theta^2}+\frac{|\Grad e|^2}{(1+e)^{1+\delta}} + \frac{|\Grad \theta|^2}{(1+\theta)^{1+\delta}}\dx \dt \le C(\delta)\qquad \forall\, \delta >0.
\end{equation}
\end{Theorem}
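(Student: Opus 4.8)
The plan is to construct the solution by a multi-level approximation that mirrors the thermodynamic structure laid out in Section~\ref{s2}, to derive a family of a~priori bounds that are uniform in all approximation parameters, and then to pass to the limit. Concretely I would use a three-level Galerkin scheme for the triple $(\vv,e,b)$ --- a divergence-free Galerkin basis of $W^{1,2}_{\bn,\diver}$ for the velocity (recovering $p$ afterwards by a de~Rham argument compatible with $\vv\cdot\bn=0$), and Galerkin bases of $W^{1,2}(\Omega)$ for $e$ and $b$ --- together with a standard single-valued regularisation of the maximal monotone boundary graph \eqref{bc2new} and, if needed, a truncation of the merely continuous coefficients so that the parabolic solvers apply; a fixed-point argument then closes the coupling. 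A key structural decision, following the discussion after \eqref{A17}, is to retain the \emph{internal energy} balance \eqref{eq:ienergy} (not the total energy balance) at the approximate level: testing it with the negative part of $e$ and using $2\nu|\mathbb{D}(\vv)|^2\ge0$ forces $e\ge0$, whence, via \eqref{ule0}--\eqref{ule1} and the invertibility established in \eqref{ex-temp}, a positive temperature $\theta=\tilde\theta(e,b)$.

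\textbf{A~priori bounds.} These come, in order: (i) a maximum principle $b_{\min}\le b\le b_{\max}$, by testing \eqref{eq:b} with $(b-b_{\max})^+$ and $(b_{\min}-b)^+$ and exploiting the sign condition $h(\theta,b)(b-1)\ge0$ in \textbf{(A5)}, the homogeneous Neumann condition, and $\Div\vv=0$, $\vv\cdot\bn=0$; (ii) the total energy balance, obtained at the Galerkin level by testing the momentum equation with $\vv$ itself (legitimate in finite dimensions) and adding \eqref{eq:ienergy} tested with the constant $1$, giving $\sup_t\|E(t)\|_{1}\le C$, hence $\vv\in L^\infty(0,T;L^2)\cap L^2(0,T;W^{1,2})$ and $e\in L^\infty(0,T;L^1)$; (iii) an entropy estimate --- the technical heart of the matter --- obtained by testing the $e$-equation with a truncation of $-1/\theta$ and passing to the limit in the truncation using $e\ge0$ and Lemma~\ref{L:psi}, which produces $\int_Q \frac{\kappa|\Grad\theta|^2}{\theta^2}+\frac{2\nu|\mathbb{D}(\vv)|^2}{\theta}+\frac{\alpha}{\theta}\frac{\partial^2\tilde\psi}{\partial b^2}|\Grad b|^2\le C$, together with $\ln\theta\in L^\infty(0,T;L^1)$ and $\eta\in L^\infty(0,T;L^1)$; (iv) higher integrability of $\Grad\theta$ and $\Grad e$ by a Boccardo--Gallou\"et/Gagliardo--Nirenberg interpolation between $\Grad\ln\theta\in L^2(Q)$ and $\theta\in L^\infty(0,T;L^1)$, which yields exactly the exponents in \eqref{FS4} and the bound \eqref{folbound} (since $e=\tilde e_0(\theta)+\tilde e_1(\theta)\tilde\psi_2(b)$ with $\tilde e_1$ and $\tilde\psi_2(b)$ bounded by \eqref{ule1} and boundedness of $b$, while $\tilde e_0(\theta)\sim\theta$ by \eqref{ule0}, so $e$ and $\theta$ are comparable); (v) integrability of the pressure via the slip condition: $\bs$ is controlled in $L^2(\partial\Omega)$ by $|\bs|\le s_*+\gamma_*|\vv_\tau|$ and the trace of $\vv$, and testing the momentum equation with a Bogovski\u{\i} corrector of a power of $p$ adapted to $\vw\cdot\bn=0$ gives \eqref{FS10}; (vi) time-derivative bounds for $\vv$, $b$, $E$, $e$, $\eta$ read directly off the respective equations, landing in negative Sobolev spaces and, for $\partial_t e$, $\partial_t\eta$, in the space of measures.

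\textbf{Limit passage.} Removing the parameters successively, from the uniform bounds and the Aubin--Lions--Simon lemma I would extract: $\vv_n\to\vv$ strongly in $L^2(Q)$, hence $\vv_n\otimes\vv_n\to\vv\otimes\vv$; $b_n\to b$ strongly in $L^2(Q)$ and a.e., with $b\in[b_{\min},b_{\max}]$; and, crucially, $e_n\to e$ strongly in $L^1(Q)$ from the $L^{p}(0,T;W^{1,p})$ bound ($p<5/4$) and the measure bound on $\partial_t e_n$. Since $\tilde e(\cdot,b)$ is continuous and strictly increasing, its inverse $\tilde\theta(\cdot,b)$ is continuous, so $\theta_n=\tilde\theta(e_n,b_n)\to\tilde\theta(e,b)=:\theta$ a.e., and the uniform $L^{5/3-\delta}$ bound plus Vitali's theorem upgrade this to strong convergence in $L^q(Q)$ for all $q<5/3$; this identifies all the nonlinear coefficients $\nu(\theta_n,b_n),\kappa(\theta_n,b_n),\alpha(\theta_n,b_n),h(\theta_n,b_n)$ in the limit. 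One then passes to the limit in \eqref{eq:vv} (identifying $\bs$ from the monotone boundary graph \eqref{bc2new} by a Minty-type argument on $\partial\Omega$), in \eqref{eq:b} (weak convergence of $\alpha(\theta_n,b_n)\Grad b_n$), and in \eqref{eq:E} --- here all terms are in divergence form, so only weak convergence of $\Grad\theta$, $p$, $\alpha\frac{\partial\tilde e}{\partial b}\Grad b$ and strong convergence of $E_n$, $\vv_n$ are needed, which is precisely why the total energy equation, rather than \eqref{eq:ienergy}, must be used in the final formulation. The entropy inequality \eqref{eq:entropy} and the internal energy inequality \eqref{eq:ienergy} survive the limit only \emph{as inequalities}, by weak lower semicontinuity of the nonnegative, convex-in-the-gradients production and dissipation terms (strong convergence of $\mathbb{D}(\vv_n)$ is not available). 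Finally \eqref{attint} follows from the weak-in-time continuity built into \eqref{FS1}--\eqref{FS9} together with the energy and entropy balances, which promote weak attainment of $\vv_0$, $b_0$ to strong $L^2$ attainment and give the $L^1$ attainment of $e$ and $\theta$.

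\textbf{Main obstacle.} I expect two steps to be the hard part. First, making the entropy estimate rigorous at the approximate level: one cannot simply divide the energy equation by $\theta$, so one must test with carefully regularised surrogates of $1/\theta$ and pass to the limit while controlling the possible degeneracy $\theta\to0_+$, and one must use Lemma~\ref{L:psi} and $e\ge0$ to close the argument. Second, the strong convergence of the temperature and the identification of the constitutive law $e=\tilde e(\theta,b)$ at the level of weak solutions, where the admissible (tamed but unbounded) growth of $\tilde e$ in $\theta$ under \textbf{(A4)} and the merely implicit definition of $\tilde\theta$ must both be accommodated. The pressure bound under the stick-slip condition, and the consistent interplay between using \eqref{eq:ienergy} at the approximation stage (to get $e\ge0$, hence $\theta>0$) and the total energy equation \eqref{eq:E} in the limit (to keep all nonlinearities in divergence form), are the remaining technically delicate points.
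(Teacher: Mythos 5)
Your outline reproduces the paper's broad architecture — Galerkin for $(\vv,e,b)$ with the internal energy as primitive variable at the approximate level, maximum principle for $b$, entropy estimate via $1/\theta$, interpolation to get \eqref{FS4} and \eqref{folbound}, pressure via the slip condition, the total energy equation in divergence form for the final limit, and Minty's method on the boundary. However, there are concrete gaps at the points you yourself flag as the hard ones.

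The most serious concerns the positivity of $e$ (hence of $\theta$). Testing the internal energy equation with the negative part of $e$ does \emph{not} give $e\ge0$ merely because the dissipation is nonnegative: the equation also contains $\Div(\kappa(\theta,b)\Grad\theta)$ and the cross-diffusion term $\Div\big(\alpha(\theta,b)\,\tilde e_1(\theta)\tilde\psi_2'(b)\Grad b\big)$, and tested against $(e-\delta)_-$ these produce $\kappa\Grad\theta\cdot\Grad(e-\delta)_-$ and $\alpha\tilde e_1\tilde\psi_2'\Grad b\cdot\Grad(e-\delta)_-$, which have no sign in general because $e$ depends on $b$ as well as $\theta$, so $\Grad e$ is not a positive multiple of $\Grad\theta$. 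The paper resolves this by replacing $\tilde\psi_1$ with a regularisation $\tilde\psi_1^\eps$ that is linear on $[0,\eps/2]$, so that $\tilde e^\eps(\theta,b)=\tilde e_0(\theta)$ and $\tilde e_1^\eps(\theta)=0$ for $\theta\le\eps/2$: on the sublevel set $\{e\le\delta\}$ with $\delta:=\tilde e^\eps(\eps/2,1)$ the cross-term vanishes identically and $\Grad\theta\cdot\Grad e=\tilde e_0'\,|\Grad\theta|^2\ge0$, which is exactly what closes the minimum principle. Your scheme has no such mechanism, so the positivity of $\theta$ — and with it the admissibility of $1/\theta$ as a test function in the entropy estimate — does not close. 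Relatedly, the entropy inequality is not obtained from the $e$-equation alone: since $\tilde\eta$ depends on both $\theta$ and $b$, one must also test the $b$-equation with $\tilde\psi_1^\eps(\theta)\tilde\psi_2'(b)/\theta$ and subtract, in order to reassemble $\partial_t\tilde\eta^\eps(\theta,b)$ by the chain rule; your sketch omits this step entirely.

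Two further devices you omit are load-bearing rather than cosmetic. First, the elliptic regularisation $\mu\Delta e$, which the paper needs to control $\Grad e$ when passing to the limit in the Galerkin approximation of the $e$-equation, before the minimum principle (and hence the relation between $\Grad e$ and $\Grad\theta$) is available. Second, the cut-off $G_k$ in the convective term of the momentum equation: it preserves the energy \emph{equality} through the limit $n\to\infty$, and comparing the limsup of the approximate energy balances with the limit energy balance is what yields the \emph{strong} convergence of $\mathbb{D}(\vv^n)$ in $L^2(Q)$. Contrary to your claim that strong convergence of $\mathbb{D}(\vv_n)$ is unavailable and one relies only on weak lower semicontinuity, this strong convergence is indispensable in the paper for identifying the limit of $2\nu|\mathbb{D}(\vv^n)|^2$ in the internal energy equation and for the attainment of the initial datum \eqref{attint} for $e$, which is proved via the renormalised equation for $\sqrt{1+e}$ combined with the total energy identity — weak-in-time continuity alone does not give the strong $L^1$ attainment.
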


\section{Road-map of the proof of Theorem \ref{main:T}}\label{s3}

In this section we describe the approximation scheme that is used to establish the existence of a weak solution. We use a relatively complicated approximation scheme with five parameters. The reason for employing such a complicated procedure is the fact that in the equation for the internal energy `cross-terms' appear, whose presence complicates the analysis, and, in addition, to obtain the entropy estimate \eqref{eq:entropy} and the energy estimate \eqref{eq:ienergy}
asserted in Theorem \ref{main:T}, we need to employ very special test functions, whose use is not easy to justify at the level of weak solutions or their finite-dimensional approximations.

\subsection{Approximation of the free energy}
We want to use the internal energy as a primitive variable (at least at the beginning of the proof) because its time derivative features in the equation for internal energy. Therefore, we slightly perturb the form of the Helmholtz free energy postulated in the assumption \textbf{(A1)} and, for  $\eps>0$ (fixed for the moment), we define the regularised Helmholtz free energy
\[ \tilde{\psi}^\eps(\theta,b) : = \tilde{\psi}_0(\theta) + \tilde{\psi}_1^\eps(\theta) \tilde{\psi}_2(b),\]
with nondecreasing concave $\tilde{\psi}^{\eps}_1\in \mathcal{C}^2([0,\infty))$ satisfying   $\tilde{\psi}_1^\eps(0)=0$, $\tilde{\psi}_1^\eps$ linear on $[0,\frac{1}{2}\eps]$ and $\tilde{\psi}_1^\eps(\theta) = \tilde{\psi}_1(\theta)$ for $\theta \geq \eps$ and $(\tilde{\psi}^{\eps}_1)'(s)\ge \tilde{\psi}_1'(s)$ for all $s\ge 0$. Observe that thanks to \textbf{(A1)} and \textbf{(A2)} such a construction is possible. 
In addition, we require\footnote{In fact, for such a construction, one requires that $\tilde{\psi}_1(0)>0$. Then, one can define
$$
\tilde{\varphi}(s):=\left\{\begin{aligned}&\tilde{\psi}_1(s) &&\mbox{for } s\ge \frac{3\eps}{4},\\
&s\frac{4\tilde{\psi}_1(3\eps/4)}{3\eps} &&\mbox{for } s\in [0,\frac{3\eps}{4}).
\end{aligned}\right.
$$
The function $\tilde{\varphi}$ satisfies all requirements except $\eps^{2}|(\tilde{\varphi})''(s)|\le C$ since $\varphi$ is not a $\mathcal{C}^2$ function. Hence, an appropriate function $\tilde{\psi}_1^{\eps}$ can be found as a regularization of $\tilde{\varphi}$ near $s=3\eps/4$. In case, $\tilde{\psi}_1(0)=0$ one can use a similar procedure with one proviso: the function $\tilde{\psi}_1$ will satisfy only $(\tilde{\psi}^{\eps}_1)'(s)\ge \frac12\tilde{\psi}_1'(s)$. However, the presence of the factor $\frac12$ does not change the subsequent analysis.} that for some constant $C>0$ independent of $\eps$ and for all $s\in (0,\eps)$ the following bound holds:
\begin{equation}\label{boundapp}
\eps |(\tilde{\psi}_1^\eps)'(s)| + \eps^{2}|(\tilde{\psi}_1^\eps)''(s)|\le C.
\end{equation}

Note further that there exists a positive constant $C>0$ independent of $\eps$ such that for all $s \in [0,\infty)$ the following bound holds:
\begin{equation}\label{boundapp1}
-s^2 (\tilde{\psi}_1^\eps)''(s) \leq C.
\end{equation}
For $s \in [0,\frac{1}{2}\eps]$ this bound is trivially true, as for such $s$ we have $(\tilde{\psi}_1^\eps)''(s) =0$. For $s \in [\frac{1}{2}\eps,\eps]$ the bound \eqref{boundapp1} is a direct consequence \eqref{boundapp} and the concavity of $\tilde{\psi}_1^{\eps}$.  For $s \geq \eps$ the inequality \eqref{boundapp1} follows from the assumption \textbf{(A2)}.

Motivated by the expressions for the internal energy and the entropy stated in \eqref{e0e1} and \eqref{eeta:def}, we also introduce the corresponding regularised internal energy and entropy as follows:
\begin{equation}\label{e0e1ep}
\begin{split}
  \tilde{e}^{\eps}(\theta,b)&:=
  \tilde{e}_0(\theta) + \tilde{e}^{\eps}_1(\theta) \tilde{\psi}_2(b),\qquad \mbox{where $\qquad \tilde{e}^{\eps}_1(\theta):= \tilde{\psi}^{\eps}_1(\theta) - \theta (\tilde{\psi}^{\eps}_1)'(\theta)$},\\
  \tilde{\eta}^{\eps}(\theta,b)&:=-\frac{\partial \tilde{\psi}^{\eps}(\theta,b)}{\partial \theta} = -\tilde{\psi}'_0(\theta) -(\tilde{\psi}^{\eps}_1)'(\theta) \tilde{\psi}_2(b).
\end{split}
\end{equation}
Hence, in particular, thanks to the assumed linearity of $\psi_1^\eps(\theta)$ for $\theta \in [0,\frac{1}{2}\eps]$, we have that
\[ \tilde{e}^\eps(\theta,b) = \tilde{e}_0(\theta) \qquad \textrm{for all }\theta \in \big[0,\frac{1}{2}\eps\big].\]
Moreover, we deduce from~\eqref{boundapp1} the following uniform bound on $\tilde{e}^{\eps}_1(s)$ for $s\in(\eps/2,\eps)$:
\begin{equation}\label{e1epsest}
\tilde{e}^{\eps}_1(s)=\int_0^s -t(\tilde{\psi}_1^{\eps})''(t)\dt\le \int_{\eps/2}^{\eps} C\eps^{-1}\dt \le C.
\end{equation}
It further follows from \eqref{e0to0} that
\begin{align}
\lim_{\theta \to 0_+} \tilde{e}^\eps(\theta,b) &=0;\label{l00}\\
\intertext{furthermore, the assumption \textbf{(A4)} (and its consequence \eqref{ule0}) give}
\lim_{\theta \to \infty} \tilde{e}^\eps(\theta,b) &=+\infty.\label{l01}
\end{align}
In addition, it follows from the assumption \textbf{(A3)} that
\begin{equation}\label{entr.nula}
\lim_{\theta\to 0_+}\tilde{\eta}^\eps(\theta,b)=-\infty.
\end{equation}

Consequently, using \textbf{(A1)} and \textbf{(A2)}, we see that for a fixed $b$ the function $\tilde{e}^\eps(\theta,b)$ is strictly monotonically increasing and thanks to \eqref{l00} and \eqref{l01} it maps $[0,\infty)$ onto $[0,\infty)$. We can therefore introduce ${\tilde \theta}^{\eps}(e,b)$ as the inverse with respect to $\theta$ of $\tilde{e}^\eps(\theta,b)$ for $\theta \geq 0$, and because $\tilde{e}^\eps(0,b)=0$, whereby ${\tilde \theta}^{\eps}(0,b)=0$, we can continuously extend $\tilde{\theta}^{\eps}$ to all $e \in \mathbb{R}$ by letting  ${\tilde \theta}^{\eps}(e,b) := e$ for $e<0$. Thus we  have that
\begin{equation}\label{signif}
\tilde{\theta}^{\eps}(e,b) > 0 \qquad \mbox{if and only if} \qquad e>0.
\end{equation}%
In addition, thanks to \textbf{(A4)}, we see that $\tilde{\theta}^{\eps}$ is Lipschitz continuous on $\mathbb{R} \times (0,\infty)$.

\subsection{Cut-off functions}
In this section we define several cut-off functions, which will be inserted into the system of PDEs in order to facilitate the initial stages of the existence proof. First, since we expect that $b\in (b_{\min}, b_{\max})$, we define the following cut-off of the quantity $b$:
\begin{equation}\label{cutb}
 b_M := \max(b_{\rm min}, \min(b, b_{\rm max})).
\end{equation}
The subscript $M$ will be from now on reserved for the operation introduced in \eqref{cutb}. Similarly, the positive and negative part of any quantity $a$ will be denoted, respectively, by
$$
a_+:=\max\{0,a\}, \qquad a_{-}:= \min\{0,a\}.
$$
Next, let $G$ be a nonnegative smooth function such that
$$
G(s)=\left\{\begin{aligned}&1 &&\mbox{for } s\in [0,1],\\
&0 &&\mbox{for } s>2,
\end{aligned}
\right.
$$
and for an arbitrary $k\in \mathbb{N}$ we set
\begin{equation}
\label{cutG}
G_k(s):=G(s/k).
\end{equation}
This cut-off will be used in the momentum equation in the convective term to preserve the energy equality.

The last cut-off function we introduce will be used for the initial conditions. For $\eps\in (0,1)$ we define $e_0^{\eps} \in L^{\infty}(\Omega)$ as
\begin{equation}\label{e0eps}
e_0^{\varepsilon}(x):=\tilde{e}^{\eps}\left(\max\left\{\eps, \min\{\theta_0(x),\eps^{-1}\}\right\},b_0(x)\right).
\end{equation}
Note that it follows from \eqref{e0e1} and the fact that $\tilde{\psi}_2$ and $\tilde{e}_1^{\eps}$ are nonnegative and $\tilde{e}_0$ is nondecreasing  that
\begin{equation}
e_0^{\varepsilon}(x)\ge \tilde{e}_0(\eps)\ge \tilde{e}_0(\eps/2)\qquad \textrm{a.e. in }\Omega.\label{minprinceps}
\end{equation}

We shall require one further approximation, which concerns the stick-slip boundary condition. Since we want to use standard techniques from PDE theory in the initial stages of the proof, we approximate the relation \eqref{bc2} by the following mollification:
\begin{equation}
\label{stickeps}
\tilde{\bs}^{\eps}(\vv_{\tau}):= s_*\frac{\vv_{\tau}}{\eps + |\vv_{\tau}|} + \gamma_{*}\vv_{\tau}.
\end{equation}
We note that $\tilde{\bs}^{\eps}: \mathbb{R}^3 \to \mathbb{R}^3$ defined through \eqref{stickeps} is a continuous mapping.

\subsection{Elliptic term in the evolution equation for internal energy}
\label{sec:ellipt-term-evol}
In the initial stages of the proof we focus on the evolution equation for the internal energy, that is on the equation whose strong form reads
\begin{equation}
  \label{eq:27}
  \varrho\tecka{e} = 2\nu(\theta, b) |\mathbb{D}(\vv)|^2 + \Div(\kappa(\theta, b) \Grad \theta + (\tilde{\psi}_1(\theta) - \theta \tilde{\psi}'_1(\theta)) \tilde{\psi}'_2(b)  \alpha(\theta, b) \Grad b).
\end{equation}
In order to facilitate the passage to the limit in the Galerkin approximation of this equation, we introduce the additional elliptic term $\mu \Delta e$ with $\mu >0$ on the right-hand side of~\eqref{eq:27}, and we shall consider instead (a weak form) of the equation
\begin{equation}
  \label{eq:28}
  \varrho\tecka{e} = \mu \Delta e + 2\nu(\theta, b) |\mathbb{D}(\vv)|^2 + \Div(\kappa(\theta, b) \Grad \theta + (\tilde{\psi}_1(\theta) - \theta \tilde{\psi}'_1(\theta)) \tilde{\psi}'_2(b)  \alpha(\theta, b) \Grad b),
\end{equation}
see~\eqref{Galerkin:e} below. In later stages of the proof we shall remove the elliptic term by passing to the limit~$\mu \to 0_+$.

\subsection{Galerkin approximation}
Finally, we introduce a Galerkin approximation, which represents the starting point for the existence proof. Thanks to the separability of the underlying spaces we can find systems of functions $\{\vw_i\}_{i=1}^{\infty}\subset W^{1,2}_{\bn, \diver}\cap W^{3,2}(\Omega)^3$ and $\{u_i\}_{i=1}^{\infty}\subset W^{1,2}(\Omega)$ whose linear hulls are dense in $W^{1,2}_{\bn, \diver}$ and $W^{1,2}(\Omega)$, respectively. Furthermore, we can choose these systems so that they are orthogonal in $W^{3,2}(\Omega)^3$ and $W^{1,2}(\Omega)$ and orthonormal in $L^2_{\bn, \diver}$ and $L^2(\Omega)$ respectively. Then, for the parameters $m,n,k \in \mathbb{N}$ and $\eps,\mu>0$, we shall seek a triple $(\vv,b,e)$ (whose dependence on the parameters $m, n, k, \eps$ and $\mu$ will not be indicated in our notation hereafter, unless we explicitly state otherwise) given by the Galerkin expansions
\begin{equation}\label{Galerkin1}
\begin{split}
\vv (t,x) &= \sum_{i=1}^n c_i (t)\vw_i(x),\\
b (t,x) &= \sum_{i=1}^m d_i (t) u_i(x),\\
e(t,x) &= \sum_{i=1}^m e_i (t) u_i(x),
\end{split}
\end{equation}
which solve \eqref{A15}  approximately in the following sense:
\begin{equation}
\begin{split}\label{Galerkin:vv}
\int_{\Omega} \partial_t\vv \cdot \vw_i &+ (2\nu(\tilde{\theta}^{\eps}(e_+,b_M),b_M)\mathbb{D}(\vv)-G_k(|\vv|)\vv\otimes \vv ):\Grad \vw_i\dx+ \int_{\partial \Omega} \tilde{\bs}^{\eps}(\vv_{\tau})\cdot \vw_i \dS\\
&=\langle \bfb, \vw_i\rangle
\end{split}
\end{equation}
for all $i=1,\ldots, n$ and almost all $t\in (0,T)$;
\begin{equation}\label{Galerkin:b}
\begin{split}
\int_{\Omega} \partial_t b \, u_i + \left(\alpha({\tilde \theta}^{\eps}(e_+,b_M),b_M) \Grad b - b\vv \right)\cdot \Grad u_i +h({\tilde \theta}
^{\eps}(e_+,b_M),b_M)u_i \dx=0
\end{split}
\end{equation}
for all $i=1,\ldots, m$ and almost all $t\in (0,T)$;
\begin{equation}\label{Galerkin:e}
\begin{split}
\int_{\Omega} \partial_t e \, u_i &+ \left(\mu \Grad e + \kappa({\tilde \theta}^{\eps}(e_+,b_M),b_M)\Grad {\tilde \theta}^{\eps}(e,b_M)
-e\vv\right) \cdot \Grad u_i\dx \\
&+\int_{\Omega} \left( \alpha({\tilde \theta}^{\eps}(e_+,b_M),b_M) \tilde{e}^{\eps}_1({\tilde \theta}^{\eps}(e_+,b_M)) \tilde{\psi}_2'(b_M)\Grad b  \right)\cdot \Grad u_i \dx\\
&\hspace{-1.6cm}=\int_{\Omega} 2\nu({\tilde \theta}^{\eps}(e_+,b_M),b_M)\,|\mathbb{D}(\vv)|^2 u_i\dx
\end{split}
\end{equation}
for all $i=1,\ldots, m$ and almost all $t\in (0,T)$.

The system \eqref{Galerkin:vv}--\eqref{Galerkin:e} is supplemented by the following initial conditions:
\begin{equation}\label{Galerkininit}
\begin{aligned}
\vv (0) &=\vv_0^n= \sum_{i=1}^n c_i^n (0)\vw_i, &&c_i^n(0):=  \int_{\Omega} \vv_0 \cdot \vw_i \dx,\\
b (0) &= b_0^m = \sum_{i=1}^m d_i^m (0) u_i, &&d_i^m(0):= \int_{\Omega} b_0\,  u_i \dx,\\
e(0) &= e_0^{m,\eps}= \sum_{i=1}^m e_i^m (0) u_i, &&e_i^m(0):=  \int_{\Omega} e_0^{\eps} \,  u_i \dx.
\end{aligned}
\end{equation}
We note that, thanks to the $L^2$-orthonormality of the three bases used, these initial conditions are just the projections of the original initial conditions onto the finite-dimensional spaces spanned by the first $n$, respectively $m$,
corresponding Galerkin basis-functions.

We do not provide the detailed proof of the existence of a solution to \eqref{Galerkin1}--\eqref{Galerkininit}. In fact, the short-time existence of solutions to \eqref{Galerkin1}--\eqref{Galerkininit} directly follows from Carath\'{e}odory's existence theorem for systems of ordinary differential equations. The global-in-time existence of solutions to \eqref{Galerkin1}--\eqref{Galerkininit} is then a consequence of the uniform (independent of the time interval) bounds on the solution established in Section~\ref{mtoinfty}.

\subsection{Limiting procedures}
The next step in the proof is to pass to the appropriate limits with the parameters $m, n, k, \eps$ and~$\mu$ in the sequence of approximate solutions. We recall that the parameters $m, n$ are related to the finite-dimensional (Galerkin) approximations of the governing equations, while the parameters $k, \eps, \mu$ are related to the use of cut-off functions, the approximation of the formula for the Helmholtz free energy, and the elliptic regularization, respectively.

\subsubsection*{The limit $m\to \infty$} In this first step, we pass to the limits in the equations for $b$ and for $e$, while we keep the equation for the velocity field at the Galerkin level. Note that this is needed because we need to control the term on the right-hand side in the equation for $e$ in a `good' function space. After passing to the limit $m \rightarrow \infty$, we will deduce minimum and maximum principles for $b$ (under the hypothesis \eqref{alpha2}) and also a uniform bound on $\Grad b$. We can then simply replace $b_M$ by $b$ in all equations.
Using the equation for $e$, we will show a minimum principle for $e$ and thus we shall be able to replace $e_+$ by $e$ in all equations. We note that the essential point here is the use of the approximate free energy $\tilde{\psi}^{\eps}$ because of the presence of the ``cross"-term $\Grad b$ in the equation for $e$.

\subsubsection*{The limit $\mu \to 0_+$} At this level we can remove the additional elliptic term from the equation for $e$ since we are able to guarantee the correct relation between $\theta$ and $e$ through $\tilde{\psi}^{\eps}$, and the term $\Grad \theta$ in the equation for $e$ also provides information about $\Grad e$. Ultimately, we have to remove this term in order to be able to derive the entropy inequality, but for this we need to be able to test the equation for the internal energy with $\theta^{-1}$, and for the rigorous justification of this step we need to have a minimum principle in hand. Therefore passage to the limit $\mu \to 0_+$ must precede passage to the limit $\eps \to 0_+$.
\subsubsection*{The limit $\eps\to 0_+$} At this level, we can rigorously deduce the equation for the entropy and consequently obtain a uniform bound on the measure of the set where the temperature hypothetically vanishes. This is done by appealing to \textbf{(A3)}. This then allows us to let $\eps\to 0_+$ and to pass to the original free energy $\tilde{\psi}$. Thanks to this step, our primary variable once again becomes the temperature, and the internal energy thus becomes a function of $\theta$ and $b$.
\subsubsection*{The limit $n\to \infty$} In this step, we finally pass from the Galerkin approximation to the continuous problem in the momentum equation as well. Then, thanks to the presence of the cut-off function $G_k$ in the convective term, we can still deduce the associated energy equality. Moreover, thanks to the boundary conditions, we can introduce an integrable pressure and consequently deduce the equation for the global energy~$E$.
\subsubsection*{The limit $k\to \infty$} This is the last step, where we remove the cut-off function from the convective term. In fact, having introduced all necessary estimates in the previous step, the limiting procedure is relatively simple, since the highest order terms are linear. The only nonstandard point here is to deduce the strong attainment of the initial data. This is however done by using an appropriate combination of the entropy inequality and the global energy equality.


\section{Proof of Theorem~\ref{main:T}}
As indicated in the previous section, the proof will be split into several steps, which we discuss here in detail. Our starting point is that we have a triple $(\vv,b,e)$, which solves \eqref{Galerkin1}--\eqref{Galerkininit}. In what follows, we also clearly indicate the dependence of the estimates on the parameters featuring in the various approximations we have performed. If an estimate is uniform we shall simply use the letter $C$ to denote a generic constant, which may depend on the data but not on the approximation parameters. If the estimate is not uniform with respect to a certain parameter, then we shall clearly indicate this; e.g., we shall write $C(n)$ when the constant $C$ depends on $n$.

\subsection{The limit $m\to \infty$}\label{mtoinfty}
We start with the derivation of the first set of a~priori estimates. We denote the triple solving the problem \eqref{Galerkin1}--\eqref{Galerkininit} by $(\vv^m, b^m, e^m)$ and derive our $m$-independent estimates, which will then permit passage to the limit $m \rightarrow \infty$.
\subsubsection*{A priori estimates}
We start with bounds on $\vv^m$. Multiplying \eqref{Galerkin:vv} by $c_i^{m}$ (the superscript $m$ is used here to emphasise the dependence on $m$) and summing with respect to $i=1,\ldots, n$ we get the identity
\begin{equation}
\begin{split}\label{EE1}
\frac12 \frac{\dd}{\dd t} \|\vv^m(t)\|_2^2 + \int_{\Omega} 2\nu(\tilde{\theta}^{\eps}(e^m_+,b^m_M), b^m_M)|\mathbb{D}(\vv^m)|^2\dx + \int_{\partial \Omega} \tilde{\bs}^{\eps}(\vv^m_{\tau})\cdot \vv^m_{\tau} \dS = \langle \bfb, \vv^m\rangle,
\end{split}
\end{equation}
where we have used the fact that $\Div \vv^m=0$, integration by parts, the fact that $\vv^m\cdot \mathbf{n} =0$ on $(0,T) \times \partial \Omega$, and the following identity:
$$
\begin{aligned}
\int_{\Omega} G_k(|\vv^m|)(\vv^m \otimes \vv^m) : \Grad \vv^m \dx &= \int_{\Omega} G_k(|\vv^m|)|\vv^m| \Grad |\vv^m| \cdot  \vv^m \dx \\
&= \int_{\Omega} \Grad \left(\int_0^{|\vv^m|} G_k(s)s \; \dd s \right) \cdot  \vv^m \dx\\
&=  -\int_{\Omega} \left(\int_0^{|\vv^m|} G_k(s)s \; \dd s \right) \Div \vv^m \dx=0.
\end{aligned}
$$
Hence, using \textbf{(A5)}, Korn's inequality, the definition \eqref{stickeps}, and the assumptions on the data, we deduce that
\begin{equation} \label{E1}
\sup_{t\in (0,T)} \|\vv^m(t)\|_{2}^2 + \int_0^T \|\vv^m\|^2_{1,2}  \dt \le C\left( \|\vv^m_0\|_2^2 + \int_0^T \|\bfb\|_{(W^{1,2}_{\bn})^*}^2 \dt \right) \le C.
\end{equation}
Note that this estimate is independent of the approximation parameter $m$. We shall also require estimates in stronger norms in what follows, but they will be $n$-dependent. Since we are considering a basis consisting of $W^{3,2}(\Omega)^3$ functions and $W^{3,2}(\Omega)^3\hookrightarrow W^{1,\infty}(\Omega)^3$ in three space dimensions, the above bounds (by norm-equivalence in finite-dimensional spaces) and the identity \eqref{Galerkin:vv} imply that
\begin{align}\label{E1nn}
 \sup_{t\in (0,T)}\left(\|\vv^m(t)\|_{1,\infty} +\|\partial_t \vv^m(t)\|_{\infty}\right)   &\leq C(n).
\end{align}

Next, we derive bounds on $b^m$. Multiplying the $i$-th equation in \eqref{Galerkin:b} by $b_i^m$, summing the resulting equalities with respect to $i=1,\ldots,m$ and using integration by parts and the fact that $\Div \vv^m = 0$, we arrive at the identity
\[ \frac{\dd}{\dd t} \|b^m\|_2^2 + 2\int_{\Omega}\alpha({\tilde \theta}^{\eps}(e^m_+,b^m_M),b^m_M)|\Grad b^m|^2 \; \dd x = -2\int_{\Omega}\! h({\tilde \theta}
^{\eps}(e^m_+,b^m_M),b^m_M)b^m\le C(b_{\min},b_{\max},\Omega) \|b^m\|_2, \]
where we have used \eqref{alpha2} to estimate the right-hand side. Therefore, using Gronwall's inequality and the fact that $\|b_0^m\|_2\le \|b_0\|_2\le C$, we see that
\begin{equation}\label{E2}
\sup_{t\in (0,T)} \|b^m(t)\|_2^2 + \int_0^T \|b^m\|_{1,2}^2 \dt \le C.
\end{equation}
Hence, using the orthogonality of the basis, the estimates \eqref{E1nn} and \eqref{E2}, and the identity \eqref{Galerkin:b}, we get the bound
\begin{equation}\label{E2t}
\int_0^T  \|\partial_t b^m \|_{(W^{1,2}(\Omega))^*}^2 \dt \le C(n).
\end{equation}

Finally, we derive bounds on $e^m$. We multiply \eqref{Galerkin:e} by $e_i^m$ and sum with respect to $i=1,\ldots, m$ to get the identity (we again use $\Div \vv^m=0$)
\begin{equation}\label{ste}
\begin{aligned}
\frac{\dd}{\dd t} \|e^m\|^2_{2} &+ 2\mu \|\Grad e^m\|^2_{2}
+ 2\int_\Omega \kappa({\tilde \theta}_{\eps}(b^m_M,e^m_+), b^m_M) \Grad {\tilde \theta}_{\eps}(b^m_M,e^m)\cdot \Grad e^m \dx \\
&+ 2\int_\Omega  \alpha({\tilde \theta}^{\eps}(e^m_+,b^m_M), b^m_M) \tilde{e}^{\eps}_1({\tilde \theta}^{\eps}(e^m_+,b^m_M)) \psi_2'(b^m_M) \Grad b^m \cdot \Grad e^m \dx\\
&\hspace{-1.6cm}= 4\int_\Omega \nu({\tilde \theta}_{\eps}(b^m_M,e^m_+), b^m_M)\,|\mathbb{D}(\vv^m)|^2 e^m \dx \leq C(n)\|e^m\|_{2},
\end{aligned}
\end{equation}
where we have used the assumption \eqref{alpha1} on $\nu$ and the bound \eqref{E1nn}. We focus on the third and the fourth term on the left-hand side of this inequality, and bound them as well. First, the fourth term can be simply estimated with the help of Young's inequality as
$$
2\int_\Omega  \alpha({\tilde \theta}^{\eps}(e^m_+,b^m_M), b^m_M) \tilde{e}^{\eps}_1({\tilde \theta}^{\eps}(e^m_+,b^m_M)) \psi_2'(b^m_M) \Grad b^m \cdot \Grad e^m \dx\ge -\frac{\mu}{2}\|\Grad e^m\|^2_{2} - C(\eps,\mu)\|b^m\|_{1,2}^2.
$$
Next, we consider the term in~\eqref{ste} containing $\Grad {\tilde \theta}_{\eps}(b^m_M,e^m)$, and
note that for $e^m < 0$ by definition $\Grad {\tilde \theta}_{\eps}(b^m_M,e^m) = \Grad e^m$. For $e^m\geq 0$ on the other hand we use the fact that $\theta^{\eps}$ is the inverse to $e^{\eps}$ and thanks to \eqref{e0e1ep} we have
\begin{equation}\label{nabla:e}
\begin{aligned}
\Grad e^m &= \Grad \left[ \tilde{e}_0({\tilde \theta}^{\eps}(b^m_M,e^m)) + \tilde{\psi}_2(b^m_M) \tilde{e}_1^{\eps}(\tilde {\theta}^{\eps}(b^m_M,e^m)) \right]\\
&=\left[\tilde{e}_0'(({\tilde \theta}^{\eps}(b^m_M,e^m)))+ \tilde{\psi}_2(b^m_M) (\tilde{e}_1^{\eps})'(\tilde {\theta}^{\eps}(b^m_M,e^m))\right]\Grad{\tilde \theta}^{\eps}(b^m_M,e^m) \\
&\quad + \tilde{e}_1^{\eps}(\tilde {\theta}^{\eps}(b^m_M,e^m))\tilde{\psi}_2'(b^m_M)\Grad b^m_M.
\end{aligned}
\end{equation}
Next, using \eqref{incr} and \textbf{(A4)}, we have $\tilde{e}_0'\ge C_1$ and since $\tilde{\psi}_1^{\eps}$ is concave, we have $(\tilde{e}_1^{\eps})'\ge 0$ and thus thanks to the nonnegativity of $\tilde\psi_2$ (which follows from \textbf{(A1)} and \textbf{(A2)}), we obtain from \eqref{nabla:e} that
\begin{align*}
\Grad {\tilde \theta}^{\eps}(b^m_M,e^m) \cdot \Grad e^m &= \left[\tilde{e}_0'(({\tilde \theta}^{\eps}(b^m_M,e^m)))+ \tilde{\psi}_2(b^m_M) (\tilde{e}_1^{\eps})'(\tilde {\theta}^{\eps}(b^m_M,e^m))\right]|\Grad{\tilde \theta}^{\eps}(b^m_M,e^m)|^2  \\
&\quad + \tilde{e}_1^{\eps}(\tilde {\theta}^{\eps}(b^m_M,e^m))\tilde{\psi}_2'(b^m_M)\Grad b^m_M \cdot \Grad {\tilde \theta}^{\eps}(b^m_M,e^m)\\
&  \ge C_1|\Grad{\tilde \theta}^{\eps}(b^m_M,e^m)|^2  -C(\eps)
|\Grad {\tilde \theta}^{\eps}(b^m,e^m)||\Grad b^m_M|\\
&  \ge \frac{C_1}{2}|\Grad{\tilde \theta}^{\eps}(b^m_M,e^m)|^2  -C(\eps)
|\Grad b^m|^2,
\end{align*}
where we have also used the definition of $b^m_M$ and consequently the simple estimate $|\Grad b^m_M|\le |\Grad b^m|$. Hence, using this estimate in \eqref{ste} together with Young's inequality (and the assumption \textbf{(A5)}), we see that
\begin{equation}\label{ste2}
\begin{aligned}
\frac{\dd}{\dd t} \|e^m\|^2_{2} &+ \mu \|\Grad e^m\|^2_{2}
+ C_1\int_\Omega |\Grad {\tilde \theta}^{\eps}(b^m_M,e^m_+)|^2 \dx \le C(\eps,\mu,n)(\|e^m\|_{2}+ \|b^m\|_{1,2}^2).
\end{aligned}
\end{equation}
%
%
%
%
%
%
%
This, together with Gronwall's lemma and the bound \eqref{E2}, gives (recall \eqref{e0eps} for $e^{\eps}_0 \in L^{\infty}(\Omega)$) that
\begin{align}\label{E3}
\sup_{t\in (0,T)}\|e^m(t)\|_{2} + \int_0^T \|e^m\|_{1,2}^2 \dt \le C(\eps, \mu, n).
\end{align}
Hence we deduce from \eqref{Galerkin:e} and the estimates \eqref{E1} and \eqref{E2} that
\begin{align}\label{E3t}
\int_0^T\|\partial_t e^m\|^2_{(W^{1,2}(\Omega))^*}\dt  \leq C(\eps, \mu, n).
\end{align}

\subsubsection*{Passing to the limit $m\to \infty$}
Using the reflexivity  of the underlying spaces together with the fact that $n$ is fixed, we can use \eqref{E1}, \eqref{E1nn}, \eqref{E2}, \eqref{E2t}, \eqref{E3} and \eqref{E3t} to find a subsequence $(\vv^m, e^m, b^m)_{m=1}^{\infty}$ that we do not relabel, such that
\begin{equation}\label{weakm}
\begin{aligned}
\vv^m &\rightharpoonup^* \vv &&\textrm{weakly$^*$ in } L^{\infty}(0,T; W^{3,2}(\Omega)^3 \cap W^{1,2}_{\bn, \diver}),\\
\partial_t\vv^m &\rightharpoonup^* \partial_t\vv &&\textrm{weakly$^*$ in } L^{\infty}(0,T; W^{3,2}(\Omega)^3 \cap W^{1,2}_{\bn, \diver}),\\
b^m &\rightharpoonup b &&\textrm{weakly in } L^{2}(0,T; W^{1,2}(\Omega)) \cap W^{1,2}(0,T; (W^{1,2}(\Omega))^*),\\
e^m &\rightharpoonup e &&\textrm{weakly in } L^{2}(0,T; W^{1,2}(\Omega)) \cap W^{1,2}(0,T; (W^{1,2}(\Omega))^*).
\end{aligned}
\end{equation}
Consequently, using the Aubin--Lions lemma and the trace theorem, we deduce that (taking again a subsequence that we do not relabel)
\begin{equation}\label{strongm}
\begin{aligned}
\vv^m &\to\vv &&\textrm{strongly in } L^{\infty}(0,T; W^{3,2}(\Omega)^3 \cap W^{1,2}_{\bn, \diver}),\\
\vv^m &\to\vv &&\textrm{strongly in } L^{\infty}(0,T;L^{\infty}(\partial \Omega)^3),\\
b^m &\to b &&\textrm{strongly in } L^{2}(0,T; L^{2}(\Omega)),\\
e^m &\to e &&\textrm{strongly in } L^{2}(0,T; L^{2}(\Omega)),\\
b^m &\to b &&\textrm{a.e. in } Q,\\
e^m &\to e &&\textrm{a.e. in } Q.
\end{aligned}
\end{equation}
Hence, thanks to the continuity of $\theta^{\eps}$, $\nu$, $\alpha$ and $\tilde{\bs}^{\eps}$, it is straightforward to pass to the limit $m\to \infty$ in \eqref{Galerkin:vv}--\eqref{Galerkin:e} to deduce that
\begin{equation}
\begin{split}\label{Galerkin:vvm}
\int_{\Omega} \partial_t\vv \cdot \vw_i &+ (2\nu(\tilde{\theta}^{\eps}(e_+,b_M), b_M)\mathbb{D}(\vv)-G_k(|\vv|)\vv\otimes \vv ):\Grad \vw_i\dx+ \int_{\partial \Omega} \tilde{\bs}^{\eps}(\vv_{\tau})\cdot \vw_i \dS\\
&=\langle \bfb, \vw_i\rangle
\end{split}
\end{equation}
for all $i=1,\ldots, n$ and almost all $t\in (0,T)$;
\begin{equation}\label{Galerkin:bm}
\begin{split}
 \langle \partial_t b, u \rangle  + \int_{\Omega}\left(\alpha({\tilde \theta}^{\eps}(e_+,b_M),b_M) \Grad b - b\vv \right)\cdot \Grad u +h({\tilde \theta}
^{\eps}(e_+,b_M), b_M)u \dx=0
\end{split}
\end{equation}
for all $u\in W^{1,2}(\Omega)$ and almost all $t\in (0,T)$;
\begin{equation}\label{Galerkin:em}
\begin{split}
\langle \partial_t e, u\rangle  &+ \int_{\Omega}\left(\mu \Grad e + \kappa({\tilde \theta}^{\eps}(e_+,b_M), b_M)\Grad {\tilde \theta}^{\eps}(e,b_M)
-e\vv\right) \cdot \Grad u\dx \\
&+\int_{\Omega} \left( \alpha({\tilde \theta}^{\eps}(e_+,b_M),b_M) \tilde{e}^{\eps}_1({\tilde \theta}^{\eps}(e_+,b_M)) \psi_2'(b_M)\Grad b  \right)\cdot \Grad u \dx\\
&\hspace{-1.3cm}=\int_{\Omega} 2\nu({\tilde \theta}^{\eps}(e_+,b_M), b_M)\,|\mathbb{D}(\vv)|^2 u\dx
\end{split}
\end{equation}
for all $u\in W^{1,2}(\Omega)$ and almost all $t\in (0,T)$.

In addition, it follows from \eqref{weakm} and parabolic embedding that $b,e \in \mathcal{C}([0,T]; L^2(\Omega))$ and $\vv\in \mathcal{C}([0,T]; W^{1,2}_{\bn, \diver})$, and it is also straightforward to show that the system \eqref{Galerkin:vvm}--\eqref{Galerkin:em} is complemented by the following initial conditions:
\begin{equation}\label{Galerkininitm}
\begin{aligned}
\vv (0) &=\vv_0^n= \sum_{i=1}^n c_i^n (0)\vw_i, &&c_i^n(0):=  \int_{\Omega} \vv_0 \cdot \vw_i \dx,\\
b (0) &= b_0,\qquad e(0) = e_0^{\eps}.
\end{aligned}
\end{equation}

\subsection{Maximum and minimum principles for $e$ and $b$}
We start this section by deriving uniform bounds on $b$. We recall \eqref{v0-cond}, i.e., there exist $b_{\min}$  and $b_{\max}$ such that $0<b_{\min} < 1 < b_{\max}$ and
$$
b_{\min}\le b_0\le b_{\max}
$$
almost everywhere in $\Omega$. Then we set $u:=(b-b_{\max})_+$ in \eqref{Galerkin:bm} and integrate over the time interval $(0,\tau)$ to get (the convective term again vanishes)
\[
\begin{aligned}
&\|(b-b_{\rm max})_+(\tau)\|^2_{2}  + 2\!\int_0^{\tau}\!\!\int_{\Omega}\alpha({\tilde \theta}^{\eps}(e_+,b_M),b_M) |\Grad (b-b_{\rm max})|^2 +2h({\tilde \theta}
^{\eps}(e_+,b_M),b_M)(b-b_{\max})_+ \dx \dt\\
&\quad  =\|(b_0-b_{\rm max})_+\|^2_{2}=0.
\end{aligned}
\]
%
%
The first integrand on the left-hand side is nonnegative thanks to the nonnegativity of $\alpha$ (\eqref{alpha1} in the assumption \textbf{(A5)}). For the second integrand we use \eqref{alpha2} from  \textbf{(A5)} and deduce that it is also nonnegative. Indeed, we see that the second integrand can be nonzero only on the set where $b > b_{\max}\ge 1$. However, using \eqref{alpha2}, we have that $h$ must be nonnegative on that set and the same is true of $(b-b_{\max})_+$. Hence the second integrand is nonnegative. Consequently, $b\le b_{\max}$ almost everywhere in $(0,T)\times \Omega$. In a similar way, we may set $u:=(b-b_{\min})_{-}$ in \eqref{Galerkin:bm} to deduce that $b\ge b_{\min}$ almost everywhere. To summarise, we deduce the two-sided uniform bound
\begin{equation}\label{inftyb}
b_{\min}\le b(t,x)\le b_{\max} \quad \textrm{for almost all }(t,x)\in (0,T)\times \Omega.
\end{equation}
Therefore, we can simply replace $b_M$ by $b$ in \eqref{Galerkin:vvm}--\eqref{Galerkin:em}, which follows from the definition of $b_M$ (see \eqref{cutb}) and the bound \eqref{inftyb}.

Next, we show that $e$ is strictly positive almost everywhere and therefore we can again simply replace $e_+$ by $e$ in the equations mentioned above.
Using the definition \eqref{e0e1ep} of $\tilde{e}^{\eps}$, the fact that $\tilde{e}_0$ and $\tilde{e}_1^{\eps}$ are nondecreasing (see \eqref{incr}) and the fact that $\tilde{\psi}_2$ has minimum at one, which follows from the convexity of $\tilde{\psi}_2$ (cf. \textbf{(A1)}) and  \textbf{(A2)}, we deduce from \eqref{e0eps} that
\begin{equation}\label{e0epsmen}
e_0^{\varepsilon}(x):=\tilde{e}^{\eps}\left(\max\left\{\eps, \min\{\theta_0(x),\eps^{-1}\}\right\},b_0(x)\right)\ge \tilde{e}^{\eps}\left(\eps/2, 1\right)=:\delta.
\end{equation}
Then we set $u:=(e- \delta)_{-}$ in \eqref{Galerkin:em} and after integration over $t\in (0,\tau)$, we deduce after using the fact that $\nu$ is nonnegative that
\begin{equation}\label{e:min}
\begin{split}
\|(e(\tau)-\delta)_{-}\|_2^2 &+2\int_0^\tau \int_{\Omega}\mu |\Grad (e-\delta)_{-}|^2  + \kappa({\tilde \theta}^{\eps}(e_+,b), b)\Grad {\tilde \theta}^{\eps}(e,b)\cdot \Grad (e-\delta)_{-}\dx\dt  \\
&+2\int_0^{\tau}\int_{\Omega}  \alpha({\tilde \theta}^{\eps}(e_+,b),b) \tilde{e}^{\eps}_1({\tilde \theta}^{\eps}(e_+,b)) \psi_2'(b)\Grad b  \cdot \Grad (e-\delta)_{-} \dx \dt \le 0.
\end{split}
\end{equation}
Our goal is to show that both integrals on the left-hand side are nonnegative. Note that we always integrate over the set where $e<\delta$. First we consider the set where $e\le 0$. However, on this set we have (see \eqref{l00}) that
$$
\tilde{e}^{\eps}_1({\tilde \theta}^{\eps}(e_+,b))=\tilde{e}^{\eps}_1({\tilde \theta}^{\eps}(0,b))=\tilde{e}^{\eps}_1(0)=0
$$
and therefore the last integrand vanishes. For the second integrand, we use the definition of $\tilde{\theta}^{\eps}(e,b)$ for negative $e$ and we see that $\Grad \tilde{\theta}^{\eps}(e,b)=\Grad e$. In summary then, over the set where $e \leq 0$ the second integral is equal to $0$ while the first integral is nonnegative and
can be therefore dropped from the left-hand side to deduce from \eqref{e:min} that
\begin{equation}\label{e:min2}
\begin{split}
& \|(e(\tau)-\delta)_{-}\|_2^2 +2\int_0^\tau \int_{\Omega} \kappa({\tilde \theta}^{\eps}(e_+,b), b)\Grad {\tilde \theta}^{\eps}(e,b)\cdot \Grad e\chi_{\{0\le e\le \delta\}}\dx\dt  \\
&+2\int_0^{\tau}\int_{\Omega}  \alpha({\tilde \theta}^{\eps}(e_+,b),b) \tilde{e}^{\eps}_1({\tilde \theta}^{\eps}(e,b)) \psi_2'(b)\Grad b  \cdot \Grad e \chi_{\{0\le e\le \delta\}} \dx \dt \le 0.
\end{split}
\end{equation}
Next, we use the fact that $\tilde{e}^{\eps}(\theta,b)$ is, for fixed $b$, increasing with respect to $\theta$ and, for fixed $\theta$, it attains its minimum at $b=1$. Thus we deduce from \eqref{e0epsmen} that
$$
\delta \ge e \ge 0\quad \implies \quad \tilde{e}^\eps(\eps/2,1)\ge \tilde{e}^{\eps}(\tilde{\theta}^{\eps}(e,b),b)\ge \tilde{e}^{\eps}(\tilde{\theta}^{\eps}(e,b),1)\quad \implies \quad \eps/2 \ge \tilde{\theta}^{\eps}(e,b).
$$
Therefore, it follows from \eqref{e0e1ep} that
$$
\delta \ge e \ge 0\quad \implies e=\tilde{e}^{\eps}(\tilde{\theta}^{\eps}(e,b),b)=\tilde{e}_0(\tilde{\theta}^{\eps}(e,b)) \quad \textrm{ and }\quad \tilde{e}_1^{\eps}(\tilde{\theta}^{\eps}(e,b))=0.
$$
Consequently, the last term in \eqref{e:min2} vanishes and for the second term we can use the identity $\Grad  e = \tilde{e}'_0(\tilde{\theta}^{\eps}(e,b)) \Grad \tilde{\theta}^{\eps}(e,b)$. Thus, both integrals are nonnegative and we therefore have
\begin{equation*}
\begin{split}
& \|(e(\tau)-\delta)_{-}\|_2^2=0,
\end{split}
\end{equation*}
which implies that
\begin{equation} \label{e:min3}
e(t,x) \ge \delta \quad \textrm{ for almost all } (t,x) \in (0,T)\times \Omega.
\end{equation}

Having shown the positivity of $e$ a.e. on $(0,T) \times \Omega$ we next show the positivity of $\tilde{\theta}^\eps$. If there existed a nonempty set where $\tilde{\theta}^{\eps}(e,b)< \eps/2$, then on such a set we would have
$$
e=\tilde{e}^{\eps}(\tilde{\theta}^{\eps}(e,b),b)= \tilde{e}_0(\tilde{\theta}^{\eps}(e,b)) < \tilde{e}_0(\eps/2)=\tilde{e}^{\eps}(\eps/2,1)=\delta,
$$
which would be in contradiction with \eqref{e:min3}. Hence, it follows that
\begin{equation} \label{e:min3t}
\tilde{\theta}^{\eps}(e(t,x),b(t,x)) \ge \eps/2 \quad \textrm{ for almost all } (t,x) \in (0,T)\times \Omega.
\end{equation}

\subsection{The limit $\mu \to 0$}\label{mutozero}
In this section we let $\mu \to 0_+$ in \eqref{Galerkin:vvm}--\eqref{Galerkininitm}. We denote by $(\vv^\mu,b^\mu,e^\mu)$ the sequence of solutions constructed in Section~\ref{mtoinfty}. Furthermore, we define
\begin{equation}
\label{thetamu}
\theta^{\mu}(t,x):= \tilde{\theta}^{\eps}(e^{\mu}(t,x),b^{\mu}(t,x)).
\end{equation}
Then using weak lower semicontinuity and the estimates \eqref{E1}, \eqref{E2} and \eqref{inftyb}, we see that
\begin{equation} \label{E1mu}
\sup_{t\in (0,T)} \|\vv^{\mu}(t)\|_{2}^2 + \int_0^T \|\vv^{\mu}\|^2_{1,2}+ \|b^{\mu}\|_{1,2}^2   \dt \le C
\end{equation}
and
\begin{equation}\label{inftybmu}
b_{\min}\le b^{\mu}(t,x)\le b_{\max} \quad \textrm{for almost all }(t,x)\in (0,T)\times \Omega.
\end{equation}
Since, $b^{\mu}$ is bounded from below and above independently of any parameter, we can now strengthen the nonuniform estimate \eqref{E2t}: it follows from \eqref{E1mu}, \eqref{inftyb} and \eqref{Galerkin:bm} that
\begin{equation}\label{E2tmu}
\int_0^T  \|\partial_t b^{\mu} \|_{(W^{1,2}(\Omega))^*}^2 \dt \le C.
\end{equation}
Furthermore, we still have a nonuniform estimate in stronger norms coming from \eqref{E1nn}, i.e.,
\begin{align}\label{E1nmu}
 \sup_{t\in (0,T)}\left(\|\vv^\mu(t)\|_{1,\infty} +\|\partial_t \vv^{\mu}(t)\|_{\infty}\right)   &\leq C(n).
\end{align}

Finally, we derive bounds on $e^{\mu}$ and $\theta^{\mu}$. We simply set $u:=e^{\mu}$ in \eqref{Galerkin:em} to get the identity
\begin{equation}\label{testmu}
\begin{split}
& \frac{\dd}{\dt}\|e^{\mu}\|_2^2 + 2\mu \|\nabla_x e^{\mu}\|_2^2 + 2\int_{\Omega}\kappa(\theta^{\mu},b^{\mu})\Grad \theta^{\mu} \cdot \Grad e^{\mu}+ \alpha(\theta^{\mu},b^{\mu}) \tilde{e}^{\eps}_1(\theta^{\mu}) \psi_2'(b^{\mu})\Grad b^{\mu} \cdot \Grad e^{\mu}\dx\\
&=4\int_{\Omega} \nu(\theta^{\mu},b^{\mu})\,|\mathbb{D}(\vv^{\mu})|^2 e^{\mu}\dx.
\end{split}
\end{equation}
Next, using the same type of computation as in~\eqref{nabla:e} and \textbf{(A4)} and also the bound \eqref{inftybmu}, we deduce that
$$
\begin{aligned}
\Grad \theta^{\mu} \cdot \Grad e^{\mu}&=\left[\tilde{e}_0'(\theta^{\mu})+ \tilde{\psi}_2(b^{\mu}) (\tilde{e}_1^{\eps})'(\theta^{\mu})\right]|\Grad \theta^{\mu}|^2 + \tilde{e}_1^{\eps}(\theta^{\mu})\tilde{\psi}_2'(b^{\mu})\Grad b^{\mu}\cdot \Grad \theta^{\mu},\\
&\ge C_1|\Grad \theta^{\mu}|^2 - C|\Grad b^{\mu}|^2,\\
|\Grad e^{\mu}|&\le C(\eps) (|\Grad \theta^{\mu}| +|\Grad b^{\mu}|).
\end{aligned}
$$
Thus, using these estimates in \eqref{testmu} and combining them with \eqref{E1mu}, \eqref{E1nmu} and using Gronwall's lemma, we have that
\begin{align}\label{E3mu}
\sup_{t\in (0,T)}\|e^\mu(t)\|_{2} + \int_0^T \|e^{\mu}\|_{1,2}^2+ \|\theta^{\mu}\|_{1,2}^2 \dt \le C(\eps, n),
\end{align}
and hence we deduce from \eqref{Galerkin:em} and the estimates \eqref{E1mu} and \eqref{E3mu} that
\begin{align}\label{E3tmu}
\int_0^T\|\partial_t e^\mu\|^2_{(W^{1,2}(\Omega))^*}\dt  \leq C(\eps, n).
\end{align}

We are now in exactly the same position as when  we passed to the limit $m\to \infty$, and thus we can again extract a subsequence (not indicated) such that
\begin{equation}\label{weakmu}
\begin{aligned}
\vv^\mu &\rightharpoonup^* \vv &&\textrm{weakly$^*$ in } L^{\infty}(0,T; W^{3,2}(\Omega)^3 \cap W^{1,2}_{\bn, \diver}),\\
\partial_t\vv^\mu &\rightharpoonup^* \partial_t\vv &&\textrm{weakly$^*$ in } L^{\infty}(0,T; W^{3,2}(\Omega)^3 \cap W^{1,2}_{\bn, \diver}),\\
b^\mu &\rightharpoonup b &&\textrm{weakly in } L^{2}(0,T; W^{1,2}(\Omega)) \cap W^{1,2}(0,T; (W^{1,2}(\Omega))^*),\\
\theta^\mu &\rightharpoonup \theta &&\textrm{weakly in } L^{2}(0,T; W^{1,2}(\Omega)),\\
e^\mu &\rightharpoonup e &&\textrm{weakly in } L^{2}(0,T; W^{1,2}(\Omega)) \cap W^{1,2}(0,T; (W^{1,2}(\Omega))^*).
\end{aligned}
\end{equation}
Consequently, using the Aubin--Lions lemma and the trace theorem, we deduce that
\begin{equation}\label{strongmu}
\begin{aligned}
\vv^\mu &\to\vv &&\textrm{strongly in } L^{\infty}(0,T; W^{3,2}(\Omega)^3 \cap W^{1,2}_{\bn, \diver}),\\
\vv^\mu &\to\vv &&\textrm{strongly in } L^{\infty}(0,T;L^{\infty}(\partial \Omega)^3),\\
b^\mu &\to b &&\textrm{strongly in } L^{2}(0,T; L^{2}(\Omega)),\\
e^\mu &\to e &&\textrm{strongly in } L^{2}(0,T; L^{2}(\Omega)),\\
b^\mu &\to b &&\textrm{a.e. in } Q,\\
e^\mu &\to e &&\textrm{a.e. in } Q.
\end{aligned}
\end{equation}
Finally, since for fixed $\eps$ the function $\tilde{\theta}^{\eps}$ is continuous, we also have that
\begin{equation}\label{strongmu2}
\begin{aligned}
\theta^\mu &\to \theta &&\textrm{strongly in } L^{2}(0,T; L^{2}(\Omega)),\\
\theta^\mu &\to \theta &&\textrm{a.e. in  } Q,
\end{aligned}
\end{equation}
where
$$
\theta=\tilde{\theta}^{\eps}(e,b).
$$
Now we are ready to let $\mu\to 0_+$ in \eqref{Galerkin:vvm}--\eqref{Galerkininitm} to get
\begin{equation}
\begin{split}\label{Galerkin:vvmu}
\int_{\Omega} \partial_t\vv \cdot \vw_i + (2\nu(\theta,b)\mathbb{D}(\vv)-G_k(|\vv|)\vv\otimes \vv ):\Grad \vw_i\dx+ \int_{\partial \Omega} \tilde{\bs}^{\eps}(\vv_{\tau})\cdot \vw_i \dS=\langle \bfb, \vw_i\rangle
\end{split}
\end{equation}
for all $i=1,\ldots, n$ and almost all $t\in (0,T)$;
\begin{equation}\label{Galerkin:bmu}
\begin{split}
 \langle \partial_t b, u \rangle  + \int_{\Omega}\left(\alpha(\theta,b) \Grad b - b\vv \right)\cdot \Grad u +h(\theta,b)u \dx=0
\end{split}
\end{equation}
for all $u\in W^{1,2}(\Omega)$ and almost all $t\in (0,T)$;
\begin{equation}\label{Galerkin:emu}
\begin{split}
\langle \partial_t e, u\rangle  &+ \int_{\Omega}\left( \kappa(\theta,b)\Grad \theta
-e\vv\right) \cdot \Grad u\dx +\int_{\Omega} \alpha(\theta,b) \tilde{e}^{\eps}_1(\theta) \psi_2'(b)\Grad b  \cdot \Grad u \dx\\
&=\int_{\Omega} 2\nu(\theta,b)\,|\mathbb{D}(\vv)|^2 u\dx
\end{split}
\end{equation}
for all $u\in W^{1,2}(\Omega)$ and almost all $t\in (0,T)$, with associated initial conditions
\begin{equation}\label{Galerkininitmu}
\begin{aligned}
\vv (0) &=\vv_0^n= \sum_{i=1}^n c_i^n (0)\vw_i, &&c_i^n(0):=  \int_{\Omega} \vv_0 \cdot \vw_i \dx,\\
b (0) &= b_0,\qquad e(0) = e_0^{\eps}.
\end{aligned}
\end{equation}

\subsection{The limits $\eps \to 0_+$ and $n\to \infty$}
In this section we set $\eps^n:=1/n$ and let $n\to \infty$. This means that the perturbed free energy converges to the original free energy $\tilde{\psi}$ and we can also pass to the limit in the momentum equation. The only remaining parameter will then be $k$, which corresponds to the cut-off in the convective term. We denote by $(\vv^n, b^n, e^n)$ the solution constructed in the previous section and also set $\theta^n:=\tilde{\theta}^{\eps^n}(e^n,b^n)$; our goal is to let $n\to \infty$ in \eqref{Galerkin:vvmu}--\eqref{Galerkininitmu}, where we set $\eps:=\eps^n = 1/n$.

\subsubsection*{Uniform estimates}
We start again by stating the relevant uniform estimates. Using weak lower semicontinuity in \eqref{E1mu} and \eqref{E2tmu} and the strong convergence \eqref{strongmu} in \eqref{inftybmu}, we deduce that
\begin{equation} \label{E1n}
\sup_{t\in (0,T)} \|\vv^{n}(t)\|_{2}^2 + \int_0^T \|\vv^{n}\|^2_{1,2}+ \|b^{n}\|_{1,2}^2 + \|\partial_t b^{n} \|_{(W^{1,2}(\Omega))^*}^2  \dt \le C
\end{equation}
and
\begin{equation}\label{inftybn}
b_{\min}\le b^{n}(t,x)\le b_{\max} \quad \textrm{for almost all }(t,x)\in (0,T)\times \Omega.
\end{equation}
Further, by using the interpolation inequality
$$
\|\vv\|_{\frac{10}{3}}^{\frac{10}{3}}\le C(\Omega)\|\vv\|^{\frac{4}{3}}_2 \|\vv\|^2_{1,2},
$$
we also obtain with the help of \eqref{E1n} that
\begin{equation} \label{E2n}
\int_0^T \|\vv^{n}\|^{\frac{10}{3}}_{\frac{10}{3}} \dt \le C.
\end{equation}
Next, thanks to the presence of $G_k$ in \eqref{Galerkin:vvmu}, we deduce by using \eqref{E1n} that
\begin{equation}\label{E1tn}
\int_0^T \|\vv^n\|_{(W^{1,2}_{\bn,\diver})^*}^2\dt\le C(k).
\end{equation}

Finally, we focus on the bounds on $e^n$ and $\theta^n$, respectively. Here, the energy equality and the  entropy inequality will play an essential role. Note that all of the bounds on $e^n$ and $\theta^n$ that we shall derive are independent of the parameters $n$ and $k$. First, we set $u=1$ in \eqref{Galerkin:emu} and integrate over $t\in (0,\tau)$ to get (using the fact that $e^{n}\ge 0$)
$$
\|e^{n}(\tau)\|_1 = \|e^{\eps}_0\|_1+ \int_0^{\tau} \int_{\Omega}2\nu(\theta^n, b^{n})\,|\mathbb{D}(\vv^{n})|^2 \dx\dt\le C,
$$
where we have used the uniform bound \eqref{E1n}, the assumption \textbf{(A5)} on $\nu$, the definition of $e_0^{\eps}$, and the fact that $\tilde{e}(\theta_0,b_0) \in L^1(\Omega)$. Consequently,
\begin{equation}\label{E3n}
\sup_{t\in (0,T)} \|e^{n}(t)\|_{1}\le C.
\end{equation}
Hence, it follows from \eqref{ule0} and the fact that $e^n \ge \tilde{e}_0(\theta^n)$ that
\begin{equation}\label{E3nb}
\sup_{t\in (0,T)} \|\theta^{n}(t)\|_{1}\le C.
\end{equation}

Next, we note that since $\theta^n \ge \eps^n/2$ it follows from \eqref{weakmu} that we can set $u:=\frac{1}{\theta^n}$ in \eqref{Galerkin:emu} to deduce that
\begin{equation}\label{entropy1}
\begin{aligned}
&\langle \partial_t e^n, \frac{1}{\theta^n} \rangle +\int_{\Omega} \frac{\vv^n \cdot \Grad e^n}{\theta^n}\dx\\
&= \int_{\Omega} \frac{\kappa(\theta^n,b^n)|\Grad \theta^n|^2}{(\theta^n)^2}+ \frac{\alpha(\theta^n,b^n) \tilde{e}^{\eps}_1(\theta^n) \psi_2'(b^n)}{(\theta^n)^2}\Grad b  \cdot \Grad \theta^n +  \frac{2\nu(\theta^n, b^n)|\mathbb{D}(\vv^n)|^2}{\theta^n} \dd x.
\end{aligned}
\end{equation}
Similarly, we set $u:=\frac{\tilde{\psi}^{\eps}_1(\theta^n) \psi_2'(b^n)}{\theta^n}$ in \eqref{Galerkin:bmu} (note that thanks to the minimum principle this is an admissible test function) to get (recall that $\tilde{e}_1^{\eps}(\theta)=\tilde{\psi}_1^{\eps}(\theta) - \theta (\tilde{\psi}_1^{\eps})'(\theta)$)
\begin{equation}\label{entropy2}
\begin{split}
 &\left\langle \partial_t b^n, \frac{\tilde{\psi}^{\eps}_1(\theta^n) \psi_2'(b^n)}{\theta^n} \right\rangle  + \int_{\Omega} \frac{\tilde{\psi}^{\eps}_1(\theta^n) \psi_2'(b^n)}{\theta^n} \vv^n \cdot \Grad b^n \dd x\\
 &=-\int_{\Omega}\alpha(\theta^n,b^n) \Grad b^n \cdot \Grad \left(\frac{\tilde{\psi}^{\eps}_1(\theta^n) \psi_2'(b^n)}{\theta^n}\right) +\frac{\tilde{\psi}^{\eps}_1(\theta^n) \psi_2'(b^n)h(\theta^n,b^n)}{\theta^n} \dx\\
 &=-\int_{\Omega} \frac{\alpha(\theta^n,b^n)\tilde{\psi}^{\eps}_1(\theta^n) \psi_2''(b^n)}{\theta^n} |\Grad b^n|^2  +\frac{\tilde{\psi}^{\eps}_1(\theta^n) \psi_2'(b^n)h(\theta^n,b^n)}{\theta^n} \dx\\
&\qquad+\int_{\Omega} \frac{\alpha(\theta^n,b^n)\psi_2'(b^n)\tilde{e}_{1}^{\eps}(\theta^n)}{(\theta^n)^2} \Grad b^n \cdot \Grad \theta^n\dx
\end{split}
\end{equation}
Thus, by subtracting \eqref{entropy2} from \eqref{entropy1}, we obtain
\begin{equation}\label{entropy}
\begin{aligned}
&\langle \partial_t e^n, \frac{1}{\theta^n} \rangle -\left\langle \partial_t b^n, \frac{\tilde{\psi}^{\eps}_1(\theta^n) \psi_2'(b^n)}{\theta^n} \right\rangle +\int_{\Omega} \frac{\vv^n \cdot \Grad e^n}{\theta^n}-\frac{\tilde{\psi}^{\eps}_1(\theta^n) \psi_2'(b^n)}{\theta^n} \vv^n \cdot \Grad b^n \dx\\
&= \int_{\Omega} \frac{\kappa(\theta^n,b^n)|\Grad \theta^n|^2}{(\theta^n)^2}+  \frac{2\nu(\theta^n, b^n)|\mathbb{D}(\vv^n)|^2}{\theta^n} \dd x \\
&\quad +\int_{\Omega} \frac{\alpha(\theta^n,b^n)\tilde{\psi}^{\eps}_1(\theta^n) \psi_2''(b^n)}{\theta^n} |\Grad b^n|^2  +\frac{\tilde{\psi}^{\eps}_1(\theta^n) \psi_2'(b^n)h(\theta^n,b^n)}{\theta^n} \dx.
\end{aligned}
\end{equation}
We focus now on the left-hand side, where we use the definition of the entropy $\tilde{\eta}^{\eps}$. Indeed, let $\partial$ be a partial derivative with respect to the variable $t$ or $x_i$, $i \in \{1,2,3\}$. Then, by using \eqref{e0e1ep}, we deduce that
\begin{equation}\label{evale}
\begin{split}
\partial e^n& = \partial \left(\tilde{e}_0(\theta^n) + \tilde{e}_1^{\eps}(\theta^n) \tilde{\psi}_2(b^n) \right)= (\tilde{e}_0'(\theta^n) + (\tilde{e}_1^{\eps})'(\theta^n) \tilde{\psi}_2(b^n))\partial \theta^n + \tilde{e}_1^{\eps}(\theta^n) \tilde{\psi}_2'(b^n) \partial b^n\\
&= (-\theta^n \tilde{\psi}_0''(\theta^n) -\theta^n (\tilde{\psi}^{\eps}_1)''(\theta^n) \tilde{\psi}_2(b^n))\partial \theta^n + (\tilde{\psi}_1^{\eps}(\theta^n)-\theta^n (\tilde{\psi}_1^{\eps})'(\theta^n) ) \tilde{\psi}_2'(b^n) \partial b^n.
\end{split}
\end{equation}
Consequently,
\begin{equation}\label{evale2}
\begin{split}
\frac{\partial e^n}{\theta^n} - \frac{\tilde{\psi}^{\eps}_1(\theta^n) \psi_2'(b^n)\partial b^n}{\theta^n}
&= (-\tilde{\psi}_0''(\theta^n) -(\tilde{\psi}^{\eps}_1)''(\theta^n) \tilde{\psi}_2(b^n))\partial \theta^n -(\tilde{\psi}_1^{\eps})'(\theta^n)  \tilde{\psi}_2'(b^n) \partial b^n\\
&= \partial(-\tilde{\psi}_0'(\theta^n) -(\tilde{\psi}^{\eps}_1)'(\theta^n) \tilde{\psi}_2(b^n))\\
&=\partial \tilde{\eta}^{\eps}(\theta^n, b^n).
\end{split}
\end{equation}
Hence, using this relation in \eqref{entropy} and integrating the result over $t\in (0,\tau)$, we get
\begin{equation}\label{entropy3}
\begin{aligned}
&\int_{\Omega}(\tilde{\eta}^{\eps}(\theta^n(\tau),b^n(\tau))-\tilde{\eta}^{\eps}(\theta^n(0),b^n(0)))\dx \\
&= \int_0^{\tau}\int_{\Omega} \frac{\kappa(\theta^n,b^n)|\Grad \theta^n|^2}{(\theta^n)^2}+  \frac{2\nu(\theta^n, b^n)|\mathbb{D}(\vv^n)|^2}{\theta^n} \dd x \dt\\
&\quad +\int_0^{\tau}\int_{\Omega} \frac{\alpha(\theta^n,b^n)\tilde{\psi}^{\eps}_1(\theta^n) \tilde{\psi}_2''(b^n)}{\theta^n} |\Grad b^n|^2  +\frac{\tilde{\psi}^{\eps}_1(\theta^n) \psi_2'(b^n)h(\theta^n,b^n)}{\theta^n} \dx\dt.
\end{aligned}
\end{equation}
We first focus on the right-hand side. For the last term, we use the fact that $\tilde{\psi}_2$ is convex and then thanks to \textbf{(A2)}, we have $\tilde{\psi}_2'(b) (b-1) \ge 0$. Consequently, using also \eqref{alpha2}, we see that the second summand in the second integral on the right-hand side of \eqref{entropy3} is nonnegative. For the remaining terms on the right-hand side, we use \textbf{(A5)} to obtain
\begin{equation}\label{entropy4}
\begin{aligned}
&\int_{\Omega}(\tilde{\eta}^{\eps}(\theta^n(\tau),b^n(\tau))-\tilde{\eta}^{\eps}(\theta^n(0),b^n(0)))\dx \\
&\ge C_1\int_0^{\tau} \int_{\Omega} \frac{|\Grad \theta^n|^2}{(\theta^n)^2}+  \frac{|\mathbb{D}(\vv^n)|^2}{\theta^n}  +\frac{\tilde{\psi}^{\eps}_1(\theta^n) \tilde{\psi}_2''(b^n)|\Grad b^n|^2}{\theta^n}  \dx\dt.
\end{aligned}
\end{equation}
Next, we focus on the term on the left-hand side of \eqref{entropy4}. First, it follows from \eqref{e0epsmen} that
\begin{equation}\label{theta0}
\theta^n(0)=\max\left\{\eps^n, \min\{\theta_0(x),(\eps^n)^{-1}\}\right\}\ge \eps^n.
\end{equation}
Consequently,
$$
|\tilde{\eta}^{\eps}(\theta^n(0),b^n(0))|=|\tilde{\eta}(\theta^n(0),b^n(0))|=|\tilde{\eta}( \max\left\{\eps^n, \min\{\theta_0(x),(\eps^n)^{-1}\}\right\},b_0)|\le C(1+|\tilde{\eta}(\theta_0,b_0)|).
$$
To estimate the first term in the integrand on the left-hand side of \eqref{entropy4} from above, we use Lemma~\ref{L:psi}, the estimate \eqref{etaup} and the fact that $(\tilde{\psi}_1^{\eps})'\ge \tilde{\psi}_1'$ together with
and \eqref{etaup}, to deduce that
\begin{align*}
\tilde{\eta}^{\eps}(\theta^n,b^n)
&\le -C_1|\ln \theta^n| + C(1+ \theta^n).
\end{align*}
Using these two inequalities in \eqref{entropy4} we arrive at the bound
\begin{equation}\label{entropy5}
\begin{aligned}
&\sup_{t\in (0,T)} \|\ln \theta^n(t)\|_1 + \int_0^{T} \|\nabla_x \ln \theta^n\|_2^2 +  \left\|\frac{\mathbb{D}(\vv^n)}{\sqrt{\theta^n}}\right\|_2^2
+ \left\| \frac{\sqrt{\tilde{\psi}^{\eps}_1(\theta^n) \tilde{\psi}_2''(b^n)}\Grad b^n}{\sqrt{\theta^n}} \right\|_2^2 \dt\\
 &\quad \le C(1+\|\tilde{\eta}(\theta_0,b_0)\|_1 + \sup_{t\in (0,T)} \|\theta^n(t)\|_1)\le C,
\end{aligned}
\end{equation}
where for the last inequality we used the assumptions on the data and the uniform bound \eqref{E3nb}. The key information from the estimate \eqref{entropy5} is that $\theta^n$ remains positive almost everywhere uniformly (logarithmically uniformly). The estimate above however does not provide sufficient information about large values of $\theta^n$. Thus to improve it, we consider $\delta\in (0,1)$ and set $u:=(1+e^{n})^{-\delta}$ in \eqref{Galerkin:emu} to obtain (using again $\Div \vv^n=0$)
\begin{equation}\label{e:est1}
\begin{split}
&  \frac{1}{1-\delta}\frac{\dd}{\dt} \int_{\Omega} (1+e^n)^{1-\delta} \dx  -\delta \int_{\Omega}\frac{\kappa(\theta^n, b^n)}{(1+e^{n})^{1+\delta}}\Grad \theta^n \cdot \Grad e^{n}\dx \\
&\quad -\delta\int_{\Omega} \frac{\alpha(\theta^n, b^{n})\tilde{e}^{\eps}_1(\theta^{n}) \psi_2'(b^{n})}{(1+e^{n})^{1+\delta}}\Grad b^{n}  \cdot \Grad e^{n} \dx\\
&=\int_{\Omega} 2\frac{\nu(\theta^{n},b^{n})}{(1+e^{n})^{\delta}}\,|\mathbb{D}(\vv^n)|^2 \dx.
\end{split}
\end{equation}
Next, using the first line in \eqref{evale}, we find that
$$
\begin{aligned}
\Grad \theta^{n} \cdot \Grad e^{n}&=\left[\tilde{e}_0'(\theta^{n})+ \tilde{\psi}_2(b^{n}) (\tilde{e}_1^{\eps})'(\theta^{n})\right]|\Grad \theta^{n}|^2 + \tilde{e}_1^{\eps}(\theta^{n})\tilde{\psi}_2'(b^{n})\Grad b^{n}\cdot \Grad \theta^{n},\\
\Grad b^{n} \cdot \Grad e^{n}&=\left[\tilde{e}_0'(\theta^{n})+ \tilde{\psi}_2(b^{n}) (\tilde{e}_1^{\eps})'(\theta^{n})\right]\Grad \theta^{n} \cdot \Grad b^{n}  + \tilde{e}_1^{\eps}(\theta^{n})\tilde{\psi}_2'(b^{n})|\Grad b^{n}|^2.
\end{aligned}
$$
Thus, by Young's inequality, \eqref{boundapp}--\eqref{e1epsest},  \textbf{(A2)}, the consequences following \textbf{(A4)} and the uniform bound \eqref{inftybn}, we have
\begin{equation}\label{explicite}
\begin{aligned}
\Grad \theta^{n} \cdot \Grad e^{n}&\ge \tilde{e}_0'(\theta^{n})|\Grad \theta^{n}|^2 - \tilde{e}_1^{\eps}(\theta^{n})|\tilde{\psi}_2'(b^{n})||\Grad b^{n}||\Grad \theta^{n}|\\
&\ge \frac{C_1}{2}|\Grad \theta^{n}|^2 - C|\Grad b^{n}|^2,\\
|\Grad b^{n} \cdot \Grad e^{n}|&\le \left[\tilde{e}_0'(\theta^{n})+ \tilde{\psi}_2(b^{n}) (\tilde{e}_1^{\eps})'(\theta^{n})\right]|\Grad \theta^{n}| |\Grad b^{n}|  + C|\Grad b^{n}|^2\\
&\le C(|\Grad \ln \theta^n|^2 +|\Grad b^n|^2 +  |\Grad \theta^{n}| |\Grad b^{n}|).
\end{aligned}
\end{equation}
Therefore, substituting the above inequalities into \eqref{e:est1}, using Young's inequality, the assumption \textbf{(A5)}, integrating the result with respect to $t\in (0,T)$ and recalling the uniform bounds \eqref{E1n} and \eqref{entropy5}, we see that
\begin{equation}\label{entropy6}
\begin{split}
\int_0^T \int_{\Omega} \frac{|\Grad \theta^n|^2}{(1+e^n)^{1+\delta}} \dx \dt \le \frac{C}{\delta(1-\delta)}.
\end{split}
\end{equation}
Next, thanks to \eqref{ule0}, \eqref{ule1}, \eqref{e1epsest} and the uniform bound \eqref{entropy5}, we see that \eqref{entropy6} leads to
\begin{equation}\label{entropy7}
\begin{split}
\int_0^T \int_{\Omega} \frac{|\Grad \theta^n|^2}{(1+\theta^n)^{1+\delta}} \dx \dt\le \int_0^T \int_{\Omega} \frac{|\Grad \theta^n|^2}{(1+e^n)^{1+\delta}} + \frac{|\Grad \theta^n|^2}{(\theta^n)^2}\dx \dt \le \frac{C}{\delta(1-\delta)}.
\end{split}
\end{equation}
Finally, using \eqref{evale}, \eqref{E1n}, \eqref{entropy5}, \eqref{entropy7}, \eqref{boundapp} and \eqref{e1epsest} and the \eqref{ule0}, we have
\begin{equation}\label{entropy8}
\begin{split}
&\int_0^T \int_{\Omega} \frac{|\Grad e^n|^2}{(1+e^n)^{1+\delta}} \dx \dt\le C\int_0^T \int_{\Omega} \frac{(1+|(\tilde{e}_1^{\eps})'(\theta^n)|^2)|\Grad \theta^n|^2}{(1+e^n)^{1+\delta}} + |\Grad b^n|^2\dx \dt \\
&\quad \le C \int_0^T \int_{\Omega} \frac{|\Grad \theta^n|^2}{(1+e^n)^{1+\delta}} + \frac{|\Grad \theta^n|^2}{(\theta^n)^2} + |\Grad b^n|^2\dx \dt \le \frac{C}{\delta(1-\delta)}.
\end{split}
\end{equation}

Next, we shall derive  uniform bounds on $\theta^n$ and $e^n$ in Lebesgue and Sobolev spaces. To this end we recall the following interpolation inequality in three space-dimensions:
\begin{equation}\label{interp1}
\int_{0}^T \|u\|_{\frac{10}{3}}^{\frac{10}{3}} \le C \sup_{t\in (0,T)} \|u(t)\|^{\frac{4}{3}}_2 \int_0^T \|u\|_{1,2}^2 \dt.
\end{equation}
Then, for $\delta\in (0,1)$ we define $u:=(1+\theta^n)^{\frac{1-\delta}{2}}$ and appeal to \eqref{E3nb} and \eqref{entropy7} to deduce that
$$
\sup_{t\in (0,T)} \|u(t)\|_2 + \int_{0}^T \|u\|_{1,2}^2 \dt \le C(\delta).
$$
Consequently, using \eqref{interp1}, we have
\begin{equation}\label{blaob}
\int_{0}^T \|u\|_{\frac{10}{3}}^{\frac{10}{3}} \le C(\delta), \quad \mbox{and therefore} \quad  \int_0^T \int_{\Omega} (1+\theta^n)^{\frac{5(1-\delta)}{3}}\dx \dt\le C(\delta),
\end{equation}
and since $\delta \in (0,1)$ was arbitrary we deduce that
\begin{equation}\label{E4theta}
\int_0^T \|\theta^n\|_p^p \dt\le C(p) \quad \textrm{ for arbitrary } p\in [1,5/3).
\end{equation}
In the same way, we also have that
\begin{equation}\label{E4e}
\int_0^T \|e^n\|_p^p \dt\le C(p) \quad \textrm{ for arbitrary } p\in [1,5/3).
\end{equation}
Next, using H\"{o}lder's inequality and the bounds \eqref{entropy7} and \eqref{blaob}, we deduce that
$$
\begin{aligned}
\int_0^T\int_{\Omega} |\Grad \theta^n|^{\frac{5(1-\delta)}{4-\delta}} \dx \dt &= \int_0^T\int_{\Omega} \left(\frac{|\Grad \theta^n|^2}{(1+\theta^n)^{1+\delta}}\right)^{\frac{5(1-\delta)}{2(4-\delta)}}(1+\theta^n)^{\frac{5(1-\delta)(1+\delta)}{2(4-\delta)}}\dx \dt\\
&\le \left(\int_0^T\int_{\Omega} \frac{|\Grad \theta^n|^2}{(1+\theta^n)^{1+\delta}}\dx \dt \right)^{\frac{5(1-\delta)}{2(4-\delta)}}\left(\int_0^T \int_{\Omega} (1+\theta^n)^{\frac{5(1-\delta)}{3}}\dx \dt\right)^{\frac{3(1+\delta)}{2(4-\delta)}}\\
&\le C(\delta).
\end{aligned}
$$
Thus, since $\delta \in (0,1)$ was arbitrary,  we obtain that
\begin{equation}\label{E4thetagrad}
\int_0^T \|\Grad \theta^n\|_p^p \dt\le C(p) \quad \textrm{ for arbitrary } p\in [1,5/4).
\end{equation}
In the same way we deduce also that
\begin{equation}\label{E4egrad}
\int_0^T \|\Grad e^n\|_p^p \dt\le C(p) \quad \textrm{ for arbitrary } p\in [1,5/4).
\end{equation}

We end this part of the proof by stating a uniform bound on $\partial_t e^n$. Using H\"{o}lder's inequality, we appeal to \eqref{E4theta} and \eqref{E2n} to find that\footnote{Note that the sequence $\{e^n\}$ is bounded in $L^{5/3-}(Q)$ and $\{\mathbf{v}^n\}$ is bounded in $L^{10/3}(Q)$. Hence $\{e^n \mathbf{v}^n\}$ is bounded in $L^{10/9-}(Q)$. Here, for $p>1$, by \textit{bounded in $L^{p-}(Q)$} we mean \textit{bounded in $L^r(Q)$ for all $r \in [1,p)$}.}
$$
\int_0^T \|e^n \vv^n\|_{p}\dt \le C(p) \quad \textrm{ for arbitrary } p\in [1,10/9).
$$
Thus, using also \eqref{E4thetagrad}, \eqref{E1n}, \eqref{inftybn} and the boundedness of $\kappa$, $\alpha$, $\tilde{e}_1^{\eps}$ and $\tilde{\psi}_2'$, and recalling also the embedding $W^{1,10}(\Omega)\hookrightarrow L^{\infty}(\Omega)$,  we deduce from \eqref{Galerkin:emu} that
\begin{equation}\label{Etnn}
\int_0^T \|\partial_t e^n\|_{(W^{1,10+\delta}(\Omega))^*}\dt \le C(\delta)\quad \textrm{ for arbitrary }\delta>0.
\end{equation}

\subsubsection*{Limits  based on the a~priori estimates}

Thanks to the uniform $n$-independent bounds \eqref{E1n}, \eqref{inftybn}, \eqref{E1tn}, \eqref{E4theta}, \eqref{E4e}, \eqref{E4thetagrad}, \eqref{E4egrad} and \eqref{Etnn}, the definition \eqref{stickeps} and the trace theorem, there exist subsequences that we do not relabel such that, for all $p\in [1,5/4)$,  all $q\in [1,5/3)$ and all $z\in (10,\infty)$,
\begin{equation}\label{weakn}
\begin{aligned}
\vv^n &\rightharpoonup \vv &&\textrm{weakly in } L^{2}(0,T; W^{1,2}_{\bn, \diver}) \cap W^{1,2}(0,T; (W^{1,2}_{\bn, \diver})^*),\\
b^n &\rightharpoonup b &&\textrm{weakly in } L^{2}(0,T; W^{1,2}(\Omega)) \cap W^{1,2}(0,T; (W^{1,2}(\Omega))^*),\\
b^n &\rightharpoonup^* b &&\textrm{weakly$^*$ in } L^{\infty}(0,T; L^{\infty}(\Omega)),\\
\theta^n &\rightharpoonup \theta &&\textrm{weakly in } L^{p}(0,T; W^{1,p}(\Omega)) \cap L^q(0,T; L^q(\Omega)),\\
e^n &\rightharpoonup e &&\textrm{weakly in } L^{p}(0,T; W^{1,p}(\Omega)) \cap L^q(0,T; L^q(\Omega)),\\
\partial_t e^n &\rightharpoonup^* \partial_t e &&\textrm{weakly$^*$ in } \mathcal{M}(0,T; (W^{1,z}(\Omega))^*),\\
\tilde{\bs}^{\eps}(\vv^n_{\tau})&\rightharpoonup \bs &&\textrm{weakly in } L^{2}(0,T; L^2(\partial \Omega)^3),
\end{aligned}
\end{equation}
with $\eps = \eps^n :=\frac{1}{n}$. Consequently, using the generalised version of the Aubin--Lions lemma (cf. Corollary 7.9 in \cite{roubcek.t:nonlinear*1}) and the trace theorem, we deduce that
\begin{equation}\label{strongn}
\begin{aligned}
\vv^n &\to\vv &&\textrm{strongly in } L^{2}(0,T; L^2(\Omega)^3),\\
\vv^n &\to\vv &&\textrm{strongly in } L^{2}(0,T;L^{2}(\partial \Omega)^3),\\
b^n &\to b &&\textrm{strongly in } L^{2}(0,T; L^{2}(\Omega)),\\
e^n &\to e &&\textrm{strongly in } L^{1}(0,T; L^{1}(\Omega)),\\
b^n &\to b &&\textrm{a.e. in } Q,\\
e^n &\to e &&\textrm{a.e. in } Q.
\end{aligned}
\end{equation}
Hence, it follows from \eqref{E3n}, the above strong convergence result and Fatou's lemma that
\begin{equation}\label{linftylone}
\|e(t)\|_1 \le C \quad \textrm{for almost all } t\in (0,T).
\end{equation}
At this point it is worth noting that since we do not control the time derivative of $\theta^n$ we cannot so simply deduce the strong convergence of the temperature. Nevertheless, we show that it is a consequence of the strong convergence of $b^n$ and $e^n$. Indeed, using \eqref{strongn} and Egorov's theorem, we know that for any $\delta>0$ there exists a set $Q_{\delta} \subset Q=(0,T)\times \Omega$ such that $e^n \to e$ and $b^n\to b$ uniformly on $Q_{\delta}$ and such that $|Q\setminus Q_{\delta}|\le \delta$. In addition, since $|\theta^n|\le C(1+e^n)$, we also have that
\begin{equation}\label{wsth}
\theta^n \rightharpoonup^* \theta \quad \textrm{ weakly$^*$ in }L^{\infty}(Q_{\delta}).
\end{equation}
We use the assumption \textbf{(A4)}, the definition of $\tilde{e}^{\eps}$, the fact that $\tilde{e}_1^{\eps}$ is nondecreasing, the weak$^*$ convergence \eqref{wsth}, the uniform convergence of $e^n$ and $b^n$ in $Q_{\delta}$ and the estimate \eqref{entropy5} to deduce that (recall that $\eps=\eps^n:=1/n$ here)
$$
\begin{aligned}
\lim_{n\to \infty} &C_1\int_{Q_\delta}|\theta^n - \theta|^2\dx \dt  \le \lim_{n\to \infty}\int_{Q_{\delta}}(\tilde{e}_0(\theta^n) - \tilde{e}_0(\theta))(\theta^n -\theta)\dx \dt=\lim_{n\to \infty}\int_{Q_{\delta}} \tilde{e}_0(\theta^n) (\theta^n -\theta)\dx \dt\\
&=\lim_{n\to \infty}\int_{Q_{\delta}} (\tilde{e}_0(\theta^n)+\tilde{e}_1^{\eps}(\theta^n)\tilde{\psi}_2(b^n)-\tilde{e}_1^{\eps}(\theta^n)\tilde{\psi}_2(b^n)) (\theta^n -\theta)\dx \dt\\
&=\lim_{n\to \infty}\int_{Q_{\delta}} (e^n-\tilde{e}_1^{\eps}(\theta^n)\tilde{\psi}_2(b^n)) (\theta^n -\theta)\dx \dt\\
&=-\lim_{n\to \infty}\int_{Q_{\delta}}\tilde{e}_1^{\eps}(\theta^n)(\tilde{\psi}_2(b^n)-\tilde{\psi}_2(b) + \tilde{\psi}_2(b)) (\theta^n -\theta)\dx \dt\\
&=-\lim_{n\to \infty}\int_{Q_{\delta}}(\tilde{e}_1^{\eps}(\theta^n)-\tilde{e}_1^{\eps}(\theta)+\tilde{e}_1^{\eps}(\theta))\tilde{\psi}_2(b) (\theta^n -\theta)\dx \dt\\
&\le -\lim_{n\to \infty}\int_{Q_{\delta}} \tilde{e}_1^{\eps}(\theta) \tilde{\psi}_2(b) (\theta^n -\theta)\dx \dt.
\end{aligned}
$$
Next, we want to replace $\tilde{e}_1^{\eps}$ by $\tilde{e}_1$. To do so, we fix $\gamma\in (0,1)$ and decompose the integral into the sum of two integrals: the first integral corresponding to the set where $\theta^n < \gamma$, and the second integral corresponding to the complement with respect to $Q_\delta$ of that set, where $\theta^n \geq  \gamma$. We note that on the latter set the inequality $\theta \geq \gamma$ also holds, almost everywhere.\footnote{Indeed, letting $A_\gamma:=\{(t,x) \in Q_\delta \colon \theta^n \geq \gamma\}$, we deduce that, for any nonnegative test function $\varphi \in L^1(Q_\delta)$,  $\int_{Q_\delta} (\theta - \gamma)\chi_{A_\gamma} \varphi \dd x \dd t \geq \int_{Q_\delta} (\theta - \theta_n)\chi_{A_\gamma} \varphi \dd x \dd t \rightarrow 0$ as $n \rightarrow \infty$, thanks to \eqref{wsth} and because $\chi_{A_\gamma} \varphi \in L^1(Q_\delta)$. Therefore, $\int_{Q_\delta} (\theta - \gamma)\chi_{A_\gamma} \varphi \dd x \dd t \geq 0$ for all nonnegative $\varphi \in L^1(Q_\delta)$, whereby $(\theta - \gamma)\chi_{A_\gamma} \geq 0$ a.e. on $Q_\delta$; in other words, $\theta \geq \gamma$ a.e. on $A_\gamma$.}
The motivation for this decomposition of the integral is that  $\tilde{e}_1^{\eps}(\theta) = \tilde{e}_1(\theta)$ on the subset of $Q_\delta$ where $\eps < \gamma \leq \theta^n$, which follows from the fact that $\tilde{\psi}_1^{\eps}(s)=\tilde{\psi}_1(s)$ for all $s \in (\eps,\infty)$. For large enough $n$, $\eps = \eps^n := \frac{1}{n} < \gamma$; thus, we can assume without loss of generality that $0 < \eps < \gamma$; hence,
$$
\begin{aligned}
\lim_{n\to \infty} &C_1\int_{Q_\delta}|\theta^n - \theta|^2\dx \dt\le
-\lim_{n\to \infty}\int_{Q_{\delta}}\tilde{e}_1^{\eps}(\theta)\tilde{\psi}_2(b) (\theta^n -\theta)\chi_{\theta^n < \gamma}\dx \dt
\\
&\qquad
-\lim_{n\to \infty}\int_{Q_{\delta}}\tilde{e}_1^{\eps}(\theta)\tilde{\psi}_2(b) (\theta^n -\theta) \chi_{\theta^n  \geq \gamma}\dx \dt\\
&\le C(\delta) \lim_{n\to \infty}|\{\theta^n <\gamma\}|-\lim_{n\to \infty}\int_{Q_{\delta}} \tilde{e}_1^{\eps}(\theta)\tilde{\psi}_2(b) (\theta^n -\theta) \chi_{\theta^n\geq\gamma}\dx \dt\\
&= C(\delta) \lim_{n\to \infty}|\{\theta^n < \gamma\}|-\lim_{n\to \infty}\int_{Q_{\delta}}\tilde{e}_1(\theta)\tilde{\psi}_2(b) (\theta^n -\theta) \chi_{\theta^n\geq\gamma}\dx \dt\\
&= C(\delta) \lim_{n\to \infty}|\{\theta^n <\gamma\}|-\lim_{n\to \infty}\int_{Q_{\delta}}\tilde{e}_1(\theta)\tilde{\psi}_2(b) (\theta^n -\theta) \dx \dt\\
&\qquad +\lim_{n\to \infty}\int_{Q_{\delta}}\tilde{e}_1(\theta)\tilde{\psi}_2(b) (\theta^n -\theta) \chi_{\theta^n < \gamma}\dx \dt\\
&\le C(\delta)\lim_{n\to \infty}|\{\theta^n < \gamma\}|,
\end{aligned}
$$
where we have used the weak$^*$ convergence of $\theta^n$ asserted in \eqref{wsth}, in tandem with \eqref{ule1} and \eqref{inftyb}, which together imply that $\tilde{e}_1 \tilde{\psi}_2 \in L^1(Q_\delta)$. Finally, using the bound \eqref{entropy5} for $\ln \theta^n$,  upon recalling that $\gamma \in (0,1)$ we
find that
$$
\begin{aligned}
\lim_{n\to \infty} &C_1\int_{Q_\delta}|\theta^n - \theta|^2\dx \dt\le C(\delta)\lim_{n\to \infty} |\{\theta^n < \gamma\}| = C(\delta)\lim_{n\to \infty} |\{|\ln \theta^n| > |\ln \gamma|\}| \\
&\le C(\delta)\lim_{n\to \infty} \int_{Q_\delta}\frac{|\ln \theta^n|}{|\ln \gamma|}\dx \dt \le C(\delta)\lim_{n\to \infty} \int_{Q}\frac{|\ln \theta^n|}{|\ln \gamma|}\dx \dt\le \frac{C(\delta)}{|\ln \gamma|}.
\end{aligned}
$$
Finally, since the left-hand side does not depend on $\gamma$, we can let $\gamma\to 0_+$ to obtain
$$
\theta^n \to \theta \quad\textrm{ strongly in } L^2(Q_\delta).
$$
However, using the fact that $|Q\setminus Q_{\delta}|\le \delta$ for all $\delta>0$, we then deduce by a diagonal argument the almost everywhere convergence of a subsequence of $\theta^n$ (not indicated) to $\theta$ on the whole of $Q$, and hence, by the weak convergence of $\theta^n$ to $\theta$ in $L^q(Q)$ for $q \in [1,\frac{5}{3})$ (see \eqref{weakn}$_4$) and Vitali's theorem we obtain that
\begin{equation}\label{thetanstrong}
\begin{aligned}
\theta^n &\to \theta \quad &&\textrm{ strongly in } L^1(0,T; L^1(\Omega)),\\
\theta^n &\to \theta \quad &&\textrm{ a.e. in } Q.
\end{aligned}
\end{equation}
Similarly, with the help of \eqref{thetanstrong} and thanks to the control on $\ln \theta^n$, we see (recall, again, our choice $\eps=\eps^n:=1/n$) that
\begin{equation}\label{pointe}
\begin{aligned}
&\lim_{n\to \infty}\int_{Q}|\tilde{e}_1^{\eps}(\theta^n)-\tilde{e}_1(\theta)|\dx \dt\\
&\le \lim_{n\to \infty}\int_{Q}|\tilde{e}_1^{\eps}(\theta^n)-\tilde{e}_1(\theta^n)|\dx \dt +\lim_{n\to \infty}\int_{Q}|\tilde{e}_1(\theta^n)-\tilde{e}_1(\theta)|\dx \dt\\
&=\lim_{n\to \infty}\int_{Q}|\tilde{e}_1^{\eps}(\theta^n)-\tilde{e}_1(\theta^n)|\chi_{\theta^n < \eps}\dx \dt \le \lim_{n\to \infty}C|\{\theta^n < \eps\}|\le \lim_{n\to \infty}\frac{C}{|\ln n|} =0,
\end{aligned}
\end{equation}
where we have also used that $\tilde{e}_1^{\eps}(\theta^n) = \tilde{e}_1(\theta^n)$ whenever $\eps\leq \gamma < \theta^n$, which follows from the fact that $\tilde{\psi}_1^{\eps}(s)=\tilde{\psi}_1(s)$ for all $s \in (\eps,\infty)$.

Consequently, since also $b^n\to b$ almost everywhere, we have
$$
e^n= \tilde{e}^{\eps}(\theta^n,b^n) \to \tilde{e}(\theta,b) \quad \textrm{ almost everywhere on } Q,
$$
and therefore we have the identification
\begin{equation}
e=\tilde{e}(\theta,b)\qquad \textrm{almost everywhere on } Q.\label{thetavse}
\end{equation}

Next, we shall identify $\bs$, i.e., the boundary term. For this purpose, we recall from \eqref{strongn}$_2$ the strong convergence of $\vv^n$ to $\vv$ in $L^2((0,T)\times \partial \Omega)^3 = L^2(0,T;L^2(\partial\Omega)^3)$, with $\vv^n \cdot \vn =0$, $\vv \cdot \vn = 0$ on $(0,T) \times \partial \Omega$
 whereby $\vv^n_\tau = \vv^n$ and $\vv_\tau = \vv$ on $(0,T) \times \partial \Omega$; we then deduce using the definition \eqref{stickeps}, for any test function $\vw$ contained in $L^2(0,T; L^2(\partial \Omega)^3)$, that
\begin{equation}\label{minty}
\begin{aligned}
0&\le \lim_{n\to \infty}\int_0^T\int_{\partial \Omega}\left(\tilde{\bs}^{\eps}(\vv^n)-\vw\right)\cdot \left(\frac{(|\tilde{\bs}^{\eps}(\vv^n)|-s_*)_+}{|\tilde{\bs}^{\eps}(\vv^n)|}\tilde{\bs}^{\eps}(\vv^n)
-\frac{(|\vw|-s_*)_+}{|\vw|}\vw \right)\dS \dt\\
&=\lim_{n\to \infty}\int_0^T\int_{\partial \Omega}\left(\tilde{\bs}^{\eps}(\vv^n)-\vw\right)\cdot \left(\gamma_* \vv^n
-\frac{(|\vw|-s_*)_+}{|\vw|}\vw \right)\dS \dt\\
&\qquad +\lim_{n\to \infty}\int_0^T\int_{\partial \Omega}\left(\tilde{\bs}^{\eps}(\vv^n)-\vw\right)\cdot \left(\frac{(|\tilde{\bs}^{\eps}(\vv^n)|-s_*)_+}{|\tilde{\bs}^{\eps}(\vv^n)|}\tilde{\bs}^{\eps}(\vv^n)
-\gamma_* \vv^n\right)\dS \dt\\
&=\int_0^T\int_{\partial \Omega}\left(\bs-\vw\right)\cdot \left(\gamma_* \vv
-\frac{(|\vw|-s_*)_+}{|\vw|}\vw \right)\dS \dt\\
&\qquad +\lim_{n\to \infty}\int_0^T\int_{\partial \Omega}\left(\tilde{\bs}^{\eps}(\vv^n)-\vw\right)\cdot \left(\frac{(|\tilde{\bs}^{\eps}(\vv^n)|-s_*)_+}{|\tilde{\bs}^{\eps}(\vv^n)|}\tilde{\bs}^{\eps}(\vv^n)
-\gamma_* \vv^n\right)\dS \dt.
\end{aligned}
\end{equation}
We shall now estimate the last integral. Using \eqref{stickeps} and noting that $\tilde{\bs}^{\eps}(\vv^n)/|\tilde{\bs}^{\eps}(\vv^n)| = \vv^n/|\vv^n|$, we have
$$
\begin{aligned}
\left|\frac{(|\tilde{\bs}^{\eps}(\vv^n)|-s_*)_+}{|\tilde{\bs}^{\eps}(\vv^n)|}\tilde{\bs}^{\eps}(\vv^n)
-\gamma_* \vv^n\right|^2= |\vv^n|^2 \left(\gamma_* -  \frac{(|\tilde{\bs}^{\eps}(\vv^n)|-s_*)_+}{|\vv^n|}
\right)^2\\
=\left[\gamma_*|\vv^n|-\left(\gamma_* |\vv^n|  -\frac{\eps s^* }{\eps + |\vv^n|}\right)_+\right]^2.
\end{aligned}
$$
Hence, for an arbitrary $\delta>0$, we deduce the bound (recall that $\eps = \eps^n:= 1/n$)
$$
\begin{aligned}
\lim_{n\to \infty}& \int_0^T \int_{\partial \Omega}\left|\frac{(|\tilde{\bs}^{\eps}(\vv^n)|-s_*)_+}{|\tilde{\bs}^{\eps}(\vv^n)|}\tilde{\bs}^{\eps}(\vv^n)-\gamma_* \vv^n\right|^2\dS \dt \\
&=\lim_{n\to \infty}\int_0^T \int_{\partial \Omega}\left[\gamma_*|\vv^n|-\left(\gamma_* |\vv^n|  -\frac{\eps s^* }{\eps + |\vv^n|}\right)_+\right]^2\chi_{|\vv^n|>\delta}\dS \dt\\
&\qquad +\lim_{n\to \infty}\int_0^T \int_{\partial \Omega}\left[\gamma_*|\vv^n|-\left(\gamma_* |\vv^n|  -\frac{\eps s^* }{\eps + |\vv^n|}\right)_+\right]^2\chi_{|\vv^n|\le \delta}\dS \dt\\
&\le\lim_{n\to \infty}\int_0^T \int_{\partial \Omega}\left[\frac{s^* }{1 + n\delta}\right]^2\dS \dt +\lim_{n\to \infty}\int_0^T \int_{\partial \Omega}\left[\gamma_*\delta\right]^2\dS \dt\\
&\le C\delta^2.
\end{aligned}
$$
Since $\gamma>0$ was arbitrary, we can let $\gamma\to 0_+$ and find that
$$
\begin{aligned}
\lim_{n\to \infty}& \int_0^T \int_{\partial \Omega}\left|\frac{(|\tilde{\bs}^{\eps}(\vv^n)|-s_*)_+}{|\tilde{\bs}^{\eps}(\vv^n)|}\tilde{\bs}^{\eps}(\vv^n)-\gamma_* \vv^n\right|^2\dS \dt =0.
\end{aligned}
$$
Using this relation in \eqref{minty}, we get
\begin{equation}\label{minty2}
\begin{aligned}
0&\le \int_0^T\int_{\partial \Omega}\left(\bs-\vw\right)\cdot \left(\gamma_* \vv
-\frac{(|\vw|-s_*)_+}{|\vw|}\vw \right) \dS \dt.
\end{aligned}
\end{equation}
Finally, since $\vw \in L^2(0,T; L^2(\partial \Omega)^3)$ is arbitrary, we can use Minty's method to deduce that
\begin{equation}\label{bs}
\gamma_* \vv = \frac{(|\bs|-s_*)_+}{|\bs|}\bs\quad \textrm{ almost everywhere on } (0,T)\times \partial \Omega.
\end{equation}
Indeed, by taking $\vw = \bs - \beta \boldsymbol{z}$ in \eqref{minty2} with $\beta>0$ for any $\boldsymbol{z} \in L^2(0,T; L^2(\partial \Omega)^3)$, dividing by $\beta$, passing to the limit $\beta \rightarrow 0_+$, and substituting $\boldsymbol{z}$ by $-\boldsymbol{z}$ in order to replace the inequality sign in \eqref{minty2}
with equality, the assertion \eqref{bs} follows.

We can now let $n\to \infty$ in \eqref{Galerkin:vvmu} and  \eqref{Galerkin:bmu} to deduce that
\begin{equation}
\begin{split}\label{Galerkin:vveps}
\langle \partial_t\vv, \vw\rangle + \int_{\Omega}(2\nu(\theta,b)\mathbb{D}(\vv)-G_k(|\vv|)\vv\otimes \vv ):\Grad \vw\dx+ \int_{\partial \Omega} \bs\cdot \vw \dS=\langle \bfb, \vw\rangle
\end{split}
\end{equation}
for all $\vw\in W^{1,2}_{\bn, \diver}$ and almost all $t\in (0,T)$;
\begin{equation}\label{Galerkin:beps}
\begin{split}
 \langle \partial_t b, u \rangle  + \int_{\Omega}\left(\alpha(\theta,b) \Grad b - b\vv \right)\cdot \Grad u +h(\theta,b)u \dx=0
\end{split}
\end{equation}
for all $u\in W^{1,2}(\Omega)$ and almost all $t\in (0,T)$ and for the initial conditions we have that
\begin{equation}\label{Galerkininiteps}
\begin{aligned}
\vv (0) &=\vv_0 \qquad \textrm{and} \qquad
b (0) &= b_0.
\end{aligned}
\end{equation}
It remains to pass to the limit in \eqref{Galerkin:emu}, but here we are facing a difficulty with the term on the right-hand side, which is a~priori only integrable and therefore we cannot identify the limit so easily; in addition we also need to recover the initial condition for $e$, and consequently for $\theta$.

\subsubsection*{Limit passage: internal energy equation}
To pass to the limit in the internal energy equation, we first show the strong convergence of the velocity gradient. First of all, multiplying
\eqref{Galerkin:vvmu} by $c_i^n$ and summing the result over $i=1,\ldots, n$ we get after integration over $(0,T)$ the following equality:
\begin{equation}
\int_0^T\int_{\Omega} 2\nu(\theta^n,b^n) |\mathbb{D}(\vv^n)|^2 \dx \dt = \int_0^T \langle \bfb, \vv^n \rangle \dt + \frac12 (\|\vv^n_0\|_2^2 - \|\vv^n(T)\|_2^2).\label{dod1}
\end{equation}
In the same way, we set $\vw:=\vv$ in \eqref{Galerkin:vveps} to get
\begin{equation}
\int_0^T\int_{\Omega} 2\nu(\theta,b) |\mathbb{D}(\vv)|^2 \dx \dt = \int_0^T \langle \bfb, \vv \rangle \dt + \frac12 (\|\vv_0\|_2^2 - \|\vv(T)\|_2^2).\label{dod2}
\end{equation}
Hence, using the weak lower semicontinuity and the weak convergence of $\vv^n$, we get from \eqref{dod1} and \eqref{dod2}
\begin{equation}\label{Minty5}
\limsup_{n\to \infty}\int_0^T\int_{\Omega} \nu(\theta^n,b^n) |\mathbb{D}(\vv^n)|^2 \dx \dt \le \int_0^T\int_{\Omega} \nu(\theta,b) |\mathbb{D}(\vv)|^2 \dx \dt.
\end{equation}
Recalling the assumption on the viscosity coefficient stated in \textbf{(A5)} gives
$$
\begin{aligned}
\lim_{n\to \infty}&C_1 \int_0^T \int_{\Omega} |\mathbb{D}(\vv^n)-\mathbb{D}(\vv)|^2\dx \dt\le \lim_{n\to \infty} \int_0^T \int_{\Omega}\nu(\theta^n,b^n) |\mathbb{D}(\vv^n)-\mathbb{D}(\vv)|^2\dx\dt\\
&\hspace{2.5mm}\leq \limsup_{n\to \infty} \int_0^T \int_{\Omega}\nu(\theta^n,b^n) (|\mathbb{D}(\vv^n)|^2+ |\mathbb{D}(\vv)|^2-2\mathbb{D}(\vv^n):\mathbb{D}(\vv))\dx\dt\\
&\overset{\eqref{Minty5}}\le \int_0^T \int_{\Omega}\nu(\theta,b) (|\mathbb{D}(\vv)|^2+ |\mathbb{D}(\vv)|^2-2\mathbb{D}(\vv):\mathbb{D}(\vv))\dx\dt=0.
\end{aligned}
$$
In the transition from the last term in the second line to the last term in the third line we used that $\mathbb{D}(\vv^n) \rightharpoonup \mathbb{D}(\vv)$ weakly in $L^2(0,T;L^2(\Omega)^{3 \times 3})$ by \eqref{weakn}$_1$, whereby  $\mathbb{D}(\vv^n):\mathbb{D}(\vv) \rightharpoonup \mathbb{D}(\vv) : \mathbb{D}(\vv)$ weakly in $L^1(0,T;L^1(\Omega)^{3 \times 3})$; that $\theta^n \rightarrow \theta$ strongly in $L^1(0,T;L^1(\Omega))$ by \eqref{thetanstrong} and therefore for a subsequence, not indicated, $\theta^n \rightarrow \theta$ a.e. on $Q$; that $b^n \rightarrow b$ strongly in $L^2(0,T;L^2(\Omega))$ by \eqref{strongn}$_3$ and therefore for a subsequence, not indicated, $b^n \rightarrow b$ a.e. on $Q$, which by the assumed continuity and boundedness of the viscosity coefficient stated in \textbf{(A5)} implies that $\nu(\theta^n,b^n) \rightarrow \nu(\theta,b)$ a.e. on $Q$ and $C_1 \leq \nu(\theta^n,b^n) \leq C_2$.
(Here we have made use of Proposition 2.61 on p.183 in \cite{fonseca.i.leoni.g:modern.methods}, concerning the weak converge in $L^1(Q)$ of a sequence, which is the product of a weakly convergent sequence in $L^1(Q)$ and a uniformly bounded sequence which converges almost everywhere on $Q$.)
Therefore, we also have
\begin{equation}\label{strong}
\begin{aligned}
\vv^n &\to \vv &&\textrm{strongly in } L^2(0,T; W^{1,2}_{\bn, \diver}),\\
\nu(\theta^n,b^n)|\mathbb{D}(\vv^n)|^2 & \to \nu(\theta,b)|\mathbb{D}(\vv)|^2 &&\textrm{strongly in } L^1(0,T; L^1(\Omega)).
\end{aligned}
\end{equation}
Here \eqref{strong}$_2$ follows from \eqref{strong}$_1$ by writing
$$\nu(\theta^n,b^n)|\mathbb{D}(\vv^n)|^2 - \nu(\theta,b)|\mathbb{D}(\vv)|^2 = \nu(\theta^n,b^n)(|\mathbb{D}(\vv^n)|^2 - |\mathbb{D}(\vv)|^2) +  (\nu(\theta^n,b^n)|\mathbb{D}(\vv)|^2  - \nu(\theta,b)|\mathbb{D}(\vv)|^2),$$
noting that the first summand on the right-hand side converges to $0$ thanks to the boundedness of $\nu$ implied by the assumption \textbf{(A5)} and the strong convergence of $|\mathbb{D}(\vv^n)|^2$ to $|\mathbb{D}(\vv)|^2$ in $L^1(Q)$ implied by \eqref{strong}$_1$; and the second summand converges to $0$  by Lebesgue's dominated convergence theorem thanks to the a.e. convergence of $\nu(\theta^n,b^n)|\mathbb{D}(\vv)|^2$  to  $\nu(\theta,b)|\mathbb{D}(\vv)|^2$ in $Q$ and the boundedness of $\nu$.

Note that in a very similar way, we can also deduce that
\begin{equation}\label{strongbn}
\begin{aligned}
b^n &\to b &&\textrm{strongly in } L^2(0,T; W^{1,2}(\Omega)).
\end{aligned}
\end{equation}
In addition, having shown the strong convergence of $\mathbb{D}(\vv^n)$, we deduce from \eqref{Galerkin:emu} that, for any $\delta>0$,
$$
\|\partial_t e^n(t)\|_{(W^{1,10+\delta}(\Omega))^*}\le g^n(t),
$$
where the sequence $(g^n)_{n \geq 1}$ is weakly convergent in $L^1(0,T)$ and thus bounded and equi-integrable in $L^1(0,T)$. Hence the sequence $(\partial_t e^n)_{n \geq 1} \subset
L^1(0,T; (W^{1,10+\delta}(\Omega))^*)$ is bounded and equi-integrable; also, as $W^{1,10+\delta}(\Omega)$ is reflexive the same is true of $(W^{1,10+\delta}(\Omega))^*$. Thus, $\partial_t e^n$ is weakly compact in $L^1(0,T; (W^{1,10+\delta}(\Omega))^*)$ (cf. \cite{Di75}).
Consequently, we have for a subsequence that
\begin{equation}\label{et:n}
\begin{aligned}
\partial_t e^n &\rightharpoonup \partial e  &&\textrm{weakly in } L^1(0,T; (W^{1,10+\delta}(\Omega))^*), \qquad \delta>0.
\end{aligned}
\end{equation}
Using this we can let $n\to \infty$ also in \eqref{Galerkin:emu} and deduce that
\begin{equation}\label{Galerkin:en}
\begin{split}
& \langle \partial_t e, u\rangle  + \int_{\Omega}\left( \kappa(\theta,b)\Grad \theta
-e\vv\right) \cdot \Grad u\dx +\int_{\Omega} \alpha(\theta,b) \tilde{e}(\theta) \psi_2'(b)\Grad b  \cdot \Grad u \dx\\
&\quad =\int_{\Omega} 2\nu(\theta,b)\,|\mathbb{D}(\vv)|^2 u\dx
\end{split}
\end{equation}
for all $u\in W^{1,10+\delta}(\Omega)$ and almost all $t\in (0,T)$. Concerning the initial condition, it is straightforward to verify that
\begin{equation}\label{Galerkininitn2}
\begin{aligned}
e(0) = \tilde{e}(\theta_0,b_0) \textrm{ in } (W^{1,10+\delta}(\Omega))^*.
\end{aligned}
\end{equation}

\subsubsection*{Initial condition the for energy and the temperature}
We end this section by strengthening \eqref{Galerkininitn2}: we will show that
\begin{equation}\label{energyinit}
\lim_{t\to 0_+} \|e(t)-\tilde{e}(\theta_0,b_0) \|_1=0.
\end{equation}
We provide the detailed proof of \eqref{energyinit} since it will be also needed in the next section. First of all, it follows from \eqref{Galerkin:en} with $u \equiv 1$, integrating in time from $0$ to $t \in (0,T)$, the equality \eqref{Galerkininitn2}, and letting $t \rightarrow 0_+$ that
\begin{equation}\label{energyupper}
\lim_{t\to 0_+} \int_{\Omega} e(t) \dx = \int_{\Omega} \tilde{e}(\theta_0,b_0) \dx.
\end{equation}
Next, we set $u:=(1+e^n)^{-\frac12}\varphi$ in \eqref{Galerkin:emu} where $\varphi \in W^{1,\infty}(\Omega)$, noting that $u$ is an admissible choice of test function since $u\in W^{1,2}(\Omega)$; hence,
\begin{equation}\label{e:renorm:n}
\begin{split}
& 2\langle \partial_t \sqrt{1+e^n}, \varphi\rangle  + \int_{\Omega}\left( \frac{\kappa(\theta^n,b^n)}{\sqrt{1+e^n}}\Grad \theta^n
-2\sqrt{1+e^n}\vv^n\right) \cdot \Grad \varphi\dx \\
&\quad +\int_{\Omega} \frac{\alpha(\theta^n,b^n) \tilde{e}^{\eps}_1(\theta^n) \psi_2'(b^n)}{\sqrt{1+e^n}} \Grad b^n  \cdot \Grad \varphi \dx\\
&\quad -\frac12\int_{\Omega} \frac{\kappa(\theta^n,b^n)\varphi}{(1+e^n)^{\frac32}}\Grad \theta^n
\cdot \Grad e^n\dx -\frac12 \int_{\Omega} \frac{\alpha(\theta^n,b^n) \tilde{e}^{\eps}_1(\theta^n) \psi_2'(b^n)}{(1+e^n)^{\frac32}}\Grad b^n  \cdot \Grad e^n \varphi\dx\\
&=\int_{\Omega} \frac{2\nu(\theta^n,b^n)\,|\mathbb{D}(\vv^n)|^2 \varphi}{\sqrt{1+e^n}}\dx.
\end{split}
\end{equation}
Consequently, using \eqref{entropy7}, \eqref{entropy8} and \eqref{E1n}, we get the uniform bound (in fact, here we use $\delta=1$ from the corresponding estimates and replace $W^{1,10+\delta}$ by $W^{1,11}$)
\begin{equation}\label{renorm}
\int_0^T \|\partial_t \sqrt{1+e^n}\|_{(W^{1,11}(\Omega))^*}\dt \le C.
\end{equation}
Thus, and thanks to \eqref{strongn}$_4$ which implies that $\sqrt{1+e^n}$ converges to $\sqrt{1+e}$ strongly in $L^2(Q)$,
\begin{align}\label{e:measure}
\partial_t \sqrt{1+e^n}\rightharpoonup^* \partial_t \sqrt{1+e} \qquad \textrm{weakly$^*$ in } \mathcal{M}(0,T; (W^{1,11}(\Omega))^*).
\end{align}
Since the time derivative is a measure, we know that for every $\tau \in (0,T)$ there exists a limit from the left and the right, where the limit is taken in the topology of the space $(W^{1,11}(\Omega))^*$, i.e., we can define
\begin{equation}\label{limleft}
\sqrt{1+e(\tau_+)}:=\lim_{t\to \tau_+} \sqrt{1+e(t)} \quad \textrm{ and } \quad \sqrt{1+e^n(\tau_-)}:=\lim_{t\to \tau_-} \sqrt{1+e(t)},
\end{equation}
where both limits are considered in the space $(W^{1,11}(\Omega))^*$. In addition, using \eqref{linftylone}, we see that $\sqrt{1+e} \in L^{\infty}(0,T; L^2(\Omega))$. Therefore, we can use the density of $W^{1,11}(\Omega)$ in $L^2(\Omega)$ and it follows from \eqref{limleft} that
\begin{equation} \label{limleft2}
\begin{aligned}
\sqrt{1+e(t)} &\rightharpoonup \sqrt{1+e(\tau_+)} &&\textrm{weakly in } L^2(\Omega) \textrm{ as } t\to \tau_+,\\
\sqrt{1+e(t)} &\rightharpoonup \sqrt{1+e(\tau_-)} &&\textrm{weakly in } L^2(\Omega) \textrm{ as } t\to \tau_-.
\end{aligned}
\end{equation}
Having defined the quantity $\sqrt{1+e}$ pointwise from the left and the right, we now show that the initial condition is attained with an inequality sign. Using \eqref{explicite} and considering $\varphi \in W^{1,\infty}(\Omega)$, with $\varphi \ge 0$, in  \eqref{e:renorm:n}, we see that
\begin{equation*}
\begin{split}
& 2\langle \partial_t \sqrt{1+e^n}, \varphi\rangle  + \int_{\Omega}\left( \frac{\kappa(\theta^n,b^n)}{\sqrt{1+e^n}}\Grad \theta^n
-2\sqrt{1+e^n}\vv^n\right) \cdot \Grad \varphi\dx \\
&\quad +\int_{\Omega} \frac{\alpha(\theta^n,b^n) \tilde{e}^{\eps}_1(\theta^n) \psi_2'(b^n)}{\sqrt{1+e^n}} \Grad b^n  \cdot \Grad \varphi \dx + C\int_{\Omega}
|\nabla_x b^n|^2 \varphi \dx \ge 0,
\end{split}
\end{equation*}
where $C$ is a positive constant, independent of $n$. After integration over $(0,\tau)$ this leads to
\begin{equation*}
\begin{split}
& 2\int_{\Omega} \sqrt{1+e^n(\tau)} \varphi \dx  + \int_0^{\tau}\int_{\Omega}\left( \frac{\kappa(\theta^n,b^n)}{\sqrt{1+e^n}}\Grad \theta^n
-2\sqrt{1+e^n}\vv^n\right) \cdot \Grad \varphi\dx \dt \\
&\quad +\int_0^{\tau}\int_{\Omega} \frac{\alpha(\theta^n,b^n) \tilde{e}^{\eps}_1(\theta^n) \psi_2'(b^n)}{\sqrt{1+e^n}} \Grad b^n  \cdot \Grad \varphi \dx +C \int_{\Omega} |\nabla_x b^n|^2 \varphi \dx \dt  \\
&\quad \ge 2\int_{\Omega} \sqrt{1+e_0^{\eps}} \varphi \dx.
\end{split}
\end{equation*}
Integration over $\tau\in (t_0, t_0+h)$ then gives
\begin{equation*}
\begin{split}
& 2\int_{t_0}^{t_0+h}\int_{\Omega} \sqrt{1+e^n(\tau)} \varphi \dx \dd \tau + \int_{t_0}^{t_0+h}\int_0^{\tau}\int_{\Omega}\left( \frac{\kappa(\theta^n,b^n)}{\sqrt{1+e^n}}\Grad \theta^n
-2\sqrt{1+e^n}\vv^n\right) \cdot \Grad \varphi\dx \dt \dd \tau\\
&\quad +\int_{t_0}^{t_0+h}\int_0^{\tau}\int_{\Omega} \frac{\alpha(\theta^n,b^n) \tilde{e}^{\eps}_1(\theta^n) \psi_2'(b^n)}{\sqrt{1+e^n}} \Grad b^n  \cdot \Grad \varphi \dx +\int_{\Omega} |\nabla_x b^n|^2 \varphi \dx \dt  \dd \tau\\
&\quad \ge 2\int_{t_0}^{t_0+h}\int_{\Omega} \sqrt{1+e_0^{\eps}} \varphi \dx\dd \tau,
\end{split}
\end{equation*}
and using the already established convergence results we can pass to the limit $n \rightarrow \infty$ with, again, $\eps=\eps^n :=\frac{1}{n}$, to see that
\begin{equation*}
\begin{split}
& 2\int_{t_0}^{t_0+h}\int_{\Omega} \sqrt{1+e(\tau)} \varphi \dx \dd \tau + \int_{t_0}^{t_0+h}\int_0^{\tau}\int_{\Omega}\left( \frac{\kappa(\theta,b)}{\sqrt{1+e}}\Grad \theta
-2\sqrt{1+e}\vv\right) \cdot \Grad \varphi\dx \dt \dd \tau\\
&\quad +\int_{t_0}^{t_0+h}\int_0^{\tau}\int_{\Omega} \frac{\alpha(\theta,b) \tilde{e}_1(\theta) \psi_2'(b)}{\sqrt{1+e}} \Grad b  \cdot \Grad \varphi \dx +\int_{\Omega} |\nabla_x b|^2 \varphi \dx \dt  \dd \tau\\
&\quad \ge 2\int_{t_0}^{t_0+h}\int_{\Omega} \sqrt{1+\tilde{e}(\theta_0,b_0)} \varphi \dx\dd \tau.
\end{split}
\end{equation*}
Finally, dividing by $h$ and letting $h\to 0_+$ and using \eqref{limleft2}, we see that for all $\tau\in (0,T)$ and arbitrary nonnegative $\varphi\in W^{1,11}(\Omega)$ the following inequality holds:
\begin{equation}\label{e:ineq:n}
\begin{split}
& 2\int_{\Omega} \sqrt{1+e(\tau_+)} \varphi \dx  + \int_0^{\tau}\int_{\Omega}\left( \frac{\kappa(\theta,b)}{\sqrt{1+e}}\Grad \theta
-2\sqrt{1+e}\vv\right) \cdot \Grad \varphi\dx \dt \\
&\quad +\int_0^{\tau}\int_{\Omega} \frac{\alpha(\theta,b) \tilde{e}_1(\theta) \psi_2'(b)}{\sqrt{1+e}} \Grad b  \cdot \Grad \varphi \dx +\int_{\Omega} |\nabla_x b|^2 \varphi \dx \dt  \\
&\quad \ge 2\int_{\Omega} \sqrt{1+\tilde{e}(\theta_0,b_0)} \varphi \dx.
\end{split}
\end{equation}
Consequently, letting $\tau \to 0_+$, using also \eqref{limleft2}, we see that, for all nonnegative $\varphi\in L^2(\Omega)$,
\begin{equation}
\lim_{t\to 0_+} \int_\Omega  \sqrt{1+e(t)} \varphi \dx \ge \int_\Omega \sqrt{1+\tilde{e}(\theta_0,b_0)} \varphi \dx.
\end{equation}
Finally, by combining this inequality (with $\varphi:= \sqrt{1+\tilde{e}(\theta_0,b_0)}$~) with \eqref{energyupper}, we get
$$
\begin{aligned}
\lim_{t\to 0_+}&\int_{\Omega} |\sqrt{1+e(t)}-\sqrt{1+\tilde{e}(\theta_0,b_0)}|^2\dx \\
&=\lim_{t\to 0_+}\int_{\Omega} 1+e(t) +1+\tilde{e}(\theta_0,b_0) - 2\sqrt{1+e(t)}\sqrt{1+\tilde{e}(\theta_0,b_0)}\dx \\
&\le \int_{\Omega} 1+\tilde{e}(\theta_0,b_0) +1+\tilde{e}(\theta_0,b_0) - 2\sqrt{1+\tilde{e}(\theta_0,b_0)}\sqrt{1+\tilde{e}(\theta_0,b_0)}\dx =0.
\end{aligned}
$$
Thus, 
we have
$$
\begin{aligned}
\lim_{t\to 0_+}&\int_{\Omega}|e(t)-\tilde{e}(\theta_0,b_0)| \dx \\
&=\lim_{t\to 0_+}\int_{\Omega}|(\sqrt{1+e(t)}-\sqrt{1+\tilde{e}(\theta_0,b_0)})(\sqrt{1+e(t)}+\sqrt{1+\tilde{e}(\theta_0,b_0)})|\dx\\
&\le C\lim_{t\to 0_+}\left(\int_{\Omega}|\sqrt{1+e(t)}-\sqrt{1+\tilde{e}(\theta_0,b_0)}|^2\dx \right)^{\frac12}=0,
\end{aligned}
$$
which is \eqref{energyinit}.

\subsection{The limit $k\to \infty$}
In this section we will denote the solution constructed in the previous section by $(\vv^k, \theta^k, b^k)$.
The final part of the proof is to remove the cut-off from the convective term in the momentum equation by passing to the limit $k \rightarrow \infty$. The key difficulty is then the fact that in the limit the velocity field will not be an admissible test function anymore and consequently we will not be able to justify the limit procedure in the term on the right-hand side of the temperature equation. To this end, we shall therefore first identify the evolution equation for the global energy. Thanks to the fact that we have assumed a slip boundary condition \eqref{bc2new}, following~\cite{bulcek.m.malek.j.ea:mathematical*1} we can introduce the pressure $p^k \in L^2(0,T; L^2_0(\Omega))$, so that
\begin{equation}
\begin{split}\label{Galerkin:vvepspres}
&\langle \partial_t\vv^k, \vw\rangle + \int_{\Omega}(2\nu(\theta^k,b^k)\mathbb{D}(\vv^k)-G_k(|\vv^k|)\vv^k\otimes \vv^k ):\Grad \vw\dx+ \int_{\partial \Omega} \bs^k\cdot \vw \dS\\
&\qquad =\langle \bfb, \vw\rangle + \int_{\Omega} p^k \Div \vw \dx
\end{split}
\end{equation}
holds for all $\vw\in W^{1,2}_{\bn}$ and almost all $t\in (0,T)$.
Next, setting $\vw:=\vv^k u$, where $u\in W^{1,\infty}(\Omega)$ and adding the result to \eqref{Galerkin:en}, we obtain
\begin{equation}\label{Galerkin:Ek}
\begin{split}
& \langle \partial_t E^k, u\rangle  + \int_{\Omega}\left( \kappa(\theta^k,b^k)\Grad \theta^k
-(E^k+p^k)\vv\right) \cdot \Grad u\dx +\int_{\partial \Omega} \bs^k\cdot \vv^k u \dS\\
&\quad +\int_{\Omega} \alpha(\theta^k,b^k) \tilde{e}_1(\theta^k) \psi_2'(b^k)\Grad b^k  \cdot \Grad u \dx=\langle \bfb, \vv^k u\rangle.
\end{split}
\end{equation}
where $E^k:=\frac12 |\vv^k|^2 + e^k$. In addition, we recall the equation for $b^k$, see \eqref{Galerkin:beps},
\begin{equation}\label{Galerkin:bk}
\begin{split}
 \langle \partial_t b^k, u \rangle  + \int_{\Omega}\left(\alpha(\theta^k,b^k) \Grad b - b\vv \right)\cdot \Grad u +h(\theta^k,b^k)u \dx=0
\end{split}
\end{equation}
for all $u\in W^{1,2}(\Omega)$ and almost all $t\in (0,T)$.
 Thus, this equation is now prepared for the limiting procedure.

As is usual in compactness arguments, we begin by collecting the estimates that are uniform in $k$. It follows from weak lower semicontinuity, Fatou's lemma and \eqref{E1n}, \eqref{inftybn}, \eqref{E2n}, \eqref{E3n}, \eqref{E3nb}, \eqref{entropy5}, \eqref{entropy7}, \eqref{entropy8}, \eqref{E4theta}, \eqref{E4e}, \eqref{E4egrad}, \eqref{E4thetagrad} and \eqref{Etnn} that, for all $\delta\in (0,1)$,
\begin{equation} \label{E1k}
\begin{split}
&\sup_{t\in (0,T)} \left(\|\vv^{k}(t)\|_{2}^2+ \|e^k(t)\|_1 + \|\theta^k(t)\|_1 + \|\ln \theta^k(t)\|_1 \right) \\
&\quad  +\int_0^T \|\vv^{k}\|^2_{1,2}+ \|b^{k}\|_{1,2}^2 + \|\partial_t b^{k} \|_{(W^{1,2}(\Omega))^*}^2 +\|\partial_t e^k\|_{(W^{1,10+\delta}(\Omega))^*}+\|\vv^k\|_{\frac{10}{3}}^{\frac{10}{3}} + \|\ln \theta^k\|_{1,2}^2 \dt\\
&\quad + \int_0^T \|(1+e^k)^{\frac{1-\delta}{2}}\|_{1,2}^2+\|(1+\theta^k)^{\frac{1-\delta}{2}}\|_{1,2}^2 + \|\bs^k\|^2_{L^2(\partial \Omega)^3} \dt\\
&\quad + \int_0^T\|\theta^k\|^{\frac{5}{4}-\delta}_{1,\frac{5}{4}-\delta}+\|e^k\|^{\frac{5}{4}-\delta}_{1,\frac{5}{4}-\delta} +\|\theta^k\|^{\frac{5}{3}-\delta}_{\frac{5}{3}-\delta}+\|e^k\|^{\frac{5}{3}-\delta}_{\frac{5}{3}-\delta}\dt \le C(\delta),
\end{split}
\end{equation}
and
\begin{equation}\label{inftybk}
b_{\min}\le b^{k}(t,x)\le b_{\max} \quad \textrm{for almost all }(t,x)\in (0,T)\times \Omega.
\end{equation}
In addition, using the interpolation inequality
$$
\|u\|_4\le C\|u\|^{\frac{1}{4}}_2 \|u\|^{\frac{3}{4}}_{1,2},
$$
we have that
\begin{equation}\label{E2k}
\int_0^T \|\vv^k\|_{4}^{\frac83}\dt\le C.
\end{equation}
Then, by proceeding as in \cite{bulcek.m.malek.j.ea:mathematical*1}, we can also deduce\footnote{This is done by inserting $\vw:=\nabla_x u$ in \eqref{Galerkin:vvepspres}, where $u$ solves $\Delta u = p^k$ subject to a homogeneous Neumann boundary condition.} that
\begin{equation}\label{E2pk}
\int_0^T \|p^k\|_{2}^{\frac43}\dt\le C.
\end{equation}
It then follows from \eqref{Galerkin:vvepspres}, \eqref{Galerkin:Ek}, \eqref{E1k}, \eqref{E2k} and \eqref{E2pk} that, for all $\delta \in (0,1)$,
\begin{equation}\label{E2tk}
\int_0^T \|\partial_t \vv^k\|_{(W^{1,2}_{\bn})^*}^{\frac43}+ \|\partial_t E^k\|^{\frac{10+\delta}{9+\delta}}_{(W^{1,10+\delta}(\Omega))^*}\dt\le C(\delta).
\end{equation}

Now, we can repeat step by step the procedure from the previous section, i.e., we can find weakly and strongly converging subsequences, such that the limiting objects $(\vv,b,\theta,e,\bs)$ satisfy \eqref{FS1}--\eqref{FS11}, \eqref{constitutive} and \eqref{folbound} and we can also obtain \eqref{eq:vv}--\eqref{eq:E} directly by letting $k\to \infty$ in \eqref{Galerkin:vvepspres}--\eqref{Galerkin:bk}. The inequalities \eqref{eq:entropy}--\eqref{eq:ienergy}, can be obtained from \eqref{Galerkin:en} and using the same scheme as suggested in \eqref{entropy1}--\eqref{entropy4}. The identification \eqref{slip} follows by Minty's method and the following inequality, which is based on the strong convergence of $\vv^k$ in $L^2(0,T; L^2(\partial \Omega)^3)$:
$$
\begin{aligned}
0&\le \lim_{k\to \infty} \int_0^T \int_{\partial \Omega} (\frac{(|\bs^k|-s_*)_+}{|\bs^k|}\bs^k - \frac{(|\vw|-s_*)_+}{|\vw|}\vw) \cdot (\bs^k - \vw)\dS \dt\\
&=\lim_{k\to \infty} \int_0^T \int_{\partial \Omega} (\gamma_* \vv^k - \frac{(|\vw|-s_*)_+}{|\vw|}\vw) \cdot (\bs^k - \vw)\dS \dt\\
&=\int_0^T \int_{\partial \Omega} (\gamma_* \vv - \frac{(|\vw|-s_*)_+}{|\vw|}\vw) \cdot (\bs - \vw)\dS \dt
\end{aligned}
$$
for an arbitrary $\vw\in L^2(0,T; L^2(\partial \Omega)^3)$.

It remains to show \eqref{attint}. In the case of $b$ the proof of the asserted attainment of the initial datum is quite standard and we shall therefore omit its proof here. For $\vv$, the argument is exactly the same as in the case of the incompressible Navier--Stokes equations. First one can show that
$$
\vv(t)\rightharpoonup\vv_0 \quad \textrm{weakly in }L^2(\Omega)^3.
$$
Next, by weak lower semicontinuity, one can also conclude that
$$
\|\vv(\tau)\|_2^2 \le \|\vv_0\|_2^2 + \int_0^{\tau} \langle \bfb, \vv\rangle \dt \quad \implies \quad \limsup_{t\to 0_+}\|\vv(t)\|_2^2 \le \|\vv_0\|_2^2.
$$
These two pieces of information then lead to \eqref{attint} for $\vv$.

Concerning the attainment of the initial datum by $e$, we recall that since $\partial_t e$ is a measure, it is meaningful to consider pointwise values $e(t_+)$ and $e(t_-)$ and we can therefore also meaningfully consider $\lim_{t\to 0_+}e(t)$. Similarly, as before, we can conclude that for every nonnegative $u\in L^2(\Omega)$ we have
\begin{equation}
\label{ling}
\liminf_{t\to 0_+}\int_{\Omega} u \sqrt{1+e(t)}\dx \ge \int_{\Omega} u \sqrt{1+\tilde{e}(\theta_0, b_0)}\dx.
\end{equation}
Although this does not yet yield the attainment of the initial datum by $e$ stated in \eqref{eq:ienergy}, we can use \eqref{eq:E} to deduce that
$$
\limsup_{t\to 0_+} \int_{\Omega} \frac12 |\vv(t)|^2 + e(t) \dx \le \int_{\Omega} \frac12|\vv_0|^2 + \tilde{e}(\theta_0,b_0)\dx.
$$
Since we already know that $\vv_0$ is attained strongly, it follows that
\begin{equation}\label{lang}
\limsup_{t\to 0_+} \|\sqrt{1 + e(t)}\|_2^2  \le \|\sqrt{1 + \tilde{e}(\theta_0,b_0)}\|_2^2.
\end{equation}
By taking $u \equiv 1$ in \eqref{ling} and combining the resulting inequality with \eqref{lang} gives that $\sqrt{1 + e(t)}\to \sqrt{1 + \tilde{e}(\theta_0,b_0)}$ strongly in $L^2(\Omega)$ and the claim \eqref{attint} for $e$ then directly follows.

Finally, the attainment of the initial datum by $\theta$ in $L^1(\Omega)$ asserted in \eqref{attint} is a consequence
of the attainment of the initial datum by $e$ in $L^1(\Omega)$ and by $b$ in $L^2(\Omega)$ (and therefore also in $L^1(\Omega)$), and the fact that
the mapping $(e,b) \in \mathbb{R}_{\geq 0} \times [b_{\min},b_{\max}] \mapsto \theta = \tilde\theta(e,b) \in \mathbb{R}_{\geq 0}$ is globally Lipschitz with respect to $e\in \mathbb{R}_{\geq 0}$ uniformly in $b \in [b_{\min},b_{\max}]$, and with respect to $b\in [b_{\min},b_{\max}]$ uniformly in $e
\in \mathbb{R}_{\geq 0}$. To prove the asserted Lipschitz continuity of $\theta = \tilde\theta(e,b)$, note that by (implicit) differentiation of \eqref{Aetae}$_2$ with respect to $e$ and $b$ we have that
\[ \frac{\partial\theta}{\partial e}(e,b) = \frac{1}{-\theta \tilde\psi_0''(\theta) -  \theta\tilde\psi_1''(\theta)\tilde\psi_2(b)}\quad \mbox{and}\quad \frac{\partial\theta}{\partial b}(e,b) = \frac{(\theta \tilde\psi_1'(\theta) - \tilde\psi_1(\theta))\tilde\psi_2'(b)}{-\theta \tilde\psi_0''(\theta) -  \theta\tilde\psi_1''(\theta)\tilde\psi_2(b)} \quad \mbox{with $\theta = \tilde\theta(e,b)$}.
\]
As $\tilde\psi_2(b) \geq 0$ for all $b>0$, thanks to \textbf{(A1)} and \textbf{(A4)} the denominators are nonnegative and are, in fact, bounded below by $C_1>0$ (with $C_1$ as in \textbf{(A4)}) for all $\theta \geq 0$ and all $b > 0$. Concerning the numerator of the second fraction, the first factor is $-\tilde{e}_1(\theta)$, which by \eqref{ule1} is bounded in absolute value by $\tilde{\psi}_1(0) + C_2$ (with $C_2$ as in \textbf{(A4)}). The numerator in the second fraction is therefore bounded in absolute value by a positive constant, uniformly in $\theta \in \mathbb{R}_{\geq 0}$ and $b \in [b_{\min},b_{\max}]$,
and therefore also uniformly in $e \in \mathbb{R}_{\geq 0}$ and $b \in [b_{\min},b_{\max}]$.
That completes the proof of Theorem \ref{main:T} asserting the existence of global-in-time weak solutions to the problem.

\section{Conclusions}
\label{sec:conclusion}

Viscoelastic rate-type models are ubiquitous in the modelling of flows of complex fluids. As such they have been of interest from the mathematical point of view as well; see for example~\cite{guillope.c.saut.jc:existence} for a seminal contribution in this direction. However, the overwhelming majority of mathematical research on the subject has been concerned with isothermal flows. This is unsatisfactory from the physical point of view, since material parameters for viscoelastic rate-type fluids are known to be sensitive to temperature-variations. In fact, accounting for temperature-variations is recognised as a major modelling problem in rheology; see for example~\cite{tanner.ri:changing}. Consequently, the mathematical analysis of models of viscoelastic fluids should also focus on the qualitative properties of equations describing the corresponding coupled thermo-mechanical processes. This has been the main objective herein.

While the model we have considered is a very simple from the physical point of view, the proof of solvability of the corresponding governing equations has been a challenging mathematical problem. A key aspect of the mathematical analysis presented in the paper is that it exploits the thermodynamic foundations of the model. We have also introduced a novel approximation scheme, which enabled us to prove the existence of~\emph{large-data global-in-time weak solutions} to the corresponding governing equations for~\emph{coupled thermo-mechanical processes}. This is, to the best of our knowledge, the first result of this type regarding viscoelastic rate-type fluids.

%

\bigskip

\centerline{\bf Acknowledgements}

\medskip

The authors acknowledge support of the project 18-12719S financed
by the Czech Science Foundation. Miroslav Bul\'{\i}\v{c}ek, Josef M\'{a}lek and V\'{i}t Pr\r{u}\v{s}a are members of the Ne\v{c}as Center for Mathematical Modeling.
Endre S\"uli is a member of the Oxford Centre for Nonlinear Partial Differential Equations (OxPDE).

\bigskip

\def\cprime{$'$} \def\ocirc#1{\ifmmode\setbox0=\hbox{$#1$}\dimen0=\ht0
  \advance\dimen0 by1pt\rlap{\hbox to\wd0{\hss\raise\dimen0
  \hbox{\hskip.2em$\scriptscriptstyle\circ$}\hss}}#1\else {\accent"17 #1}\fi}


\providecommand{\bysame}{\leavevmode\hbox to3em{\hrulefill}\thinspace}
\providecommand{\MR}{\relax\ifhmode\unskip\space\fi MR }
\providecommand{\MRhref}[2]{%
  \href{http://www.ams.org/mathscinet-getitem?mr=#1}{#2}
}
\providecommand{\href}[2]{#2}

\end{document}